\newcommand{\verteq}{\rotatebox{90}{$\,=$}}
\newcommand{\equalto}[2]{\underset{\scriptstyle\overset{\mkern4mu\verteq}{#2}}{#1}}
  \theoremstyle{plain}
    \newtheorem{thm}{Theorem}[section]
    \newtheorem{prop}[thm]{Proposition}
    \newtheorem{subsec}[thm]{}
\theoremstyle{definition}
    \newtheorem{defn}[thm]{Definition}
    \newtheorem{exam}[thm]{Example}
\theoremstyle{remark}
\title{}
\author{}
\date{}
\begin{document}

\title{Embedding tensors on Hom-Lie algebras}

\begin{abstract}
The notion of embedding tensors and the associated tensor hierarchies form an effective tool for the construction of supergravity and higher gauge theories. Embedding tensors and related structures are extensively studied also in the mathematics literature. On the other hand, Hom-Lie algebras were introduced in the study of $q$-deformations of Witt and Virasoro algebras. In this paper, we first introduce embedding tensors on a Hom-Lie algebra with respect to a given representation. An embedding tensor naturally induces a Hom-Leibniz algebra structure. We construct a graded Lie algebra that characterizes embedding tensors as its Maurer-Cartan elements. Using this, we define the cohomology of an embedding tensor and realize it as the cohomology of the induced Hom-Leibniz algebra with coefficients in a suitable representation. A triple consisting of a Hom-Lie algebra, a representation and an embedding tensor is called a Hom-Lie-Leibniz triple. We construct the controlling $L_\infty$-algebra of a given Hom-Lie-Leibniz triple. Next, we define the cohomology of a Hom-Lie-Leibniz triple that governs the deformations of the structure. Finally, we introduce homotopy embedding tensors, $HLeib_\infty$-algebras and find their relations.
\end{abstract}

\author{Apurba Das}
\address{Department of Mathematics, Indian Institute of Technology, Kharagpur 721302, West Bengal, India.}
\email{apurbadas348@gmail.com}

\author{Abdenacer Makhlouf}
\address{University of Haute Alsace, IRIMAS - D\'{e}partement de Math\'{e}matiques, 18, rue des fr\`{e}res Lumi\`{e}re, F-68093 Mulhouse, France}
\email{abdenacer.makhlouf@uha.fr}

\maketitle


\noindent {2020 Mathematics Subject Classification : }17B61, 17B40, 17A32, 17B70.

\medskip

\noindent {Keywords :} Embedding tensor, Hom-Lie algebra, Embedding triple, Cohomology, Homotopy embedding tensor, $HLeib_\infty$-algebra.


\tableofcontents

\section{Introduction}
The notion of `embedding tensor' and the associated tensor hierarchies provide a useful tool for the construction of supergravity theories and further higher gauge theories. Given a Lie algebra $(\mathfrak{g}, [~,~])$ and a representation $(V, \rho)$, a linear map $T: V \rightarrow \mathfrak{g}$ is said to be an embedding tensor on $\mathfrak{g}$ with respect to the representation $(V, \rho)$ if the map $T$ satisfies the following identity:
\begin{align}
    [T(u), T(v)] = T (\rho (Tu) v), \text{ for } u, v \in V.
\end{align}
A triple consisting of a Lie algebra, a representation and an embedding tensor is called a Lie-Leibniz triple. See \cite{bon,bon2,wit,wit2,wit3,wit4,hohm,kotov-strobl,nicolai} and references therein for extensive (physics) literature about embedding tensors, Lie-Leibniz triples and associated tensor hierarchies. Recently, this topic got the attention of many mathematical physicists and mathematicians working on higher structures \cite{kotov-strobl,lavau,lavau-p,lavau-stas,sheng-embd,strobl-wage}. Among others, Kotov and Strobl first observe the mathematical nature behind the calculations appearing from embedding tensors and their tensor hierarchies in the physics literature \cite{kotov-strobl}. They showed that an embedding tensor naturally gives rise to a Leibniz algebra that further gives rise to an $L_\infty$-algebra structure. Thus, embedding tensors are related to rich mathematical structures. In the physical application, they observed that one could focus on Leibniz algebras (instead of embedding tensors) in the construction of the corresponding gauge theories. In \cite{sheng-embd} Sheng, Tang and Zhu added some more mathematical flavours to this subject. Namely, they first constructed the controlling structures for embedding tensors and Lie-Leibniz triples. As applications, they considered cohomology, deformations and homotopy theory for embedding tensors and Lie-Leibniz triples. Their idea is similar to the cohomology and deformation theory for Rota-Baxter algebras \cite{laza,das-mishra}.

\medskip

On the other hand, the notion of Hom-Lie algebras was introduced by Hartwig, Larsson and Silvestrov in the study of $q$-deformations (also called quantum deformations) of the Witt and Virasoro algebra \cite{hls}. A Hom-Lie algebra is a vector space equipped with a skew-symmetric bilinear bracket whose Jacobi identity is twisted by a bracket preserving linear homomorphism (called the Hom-Jacobi identity). We often call a vector space together with a linear homomorphism by a Hom-vector space. Morphisms between Hom-vector spaces are those morphisms between vector spaces that preserve the corresponding linear homomorphisms. With this, a Hom-Lie algebra can be realized as a Hom-vector space with a skew-symmetric bilinear bracket satisfying the Hom-Jacobi identity (see Definition \ref{defn-hom-lie}). When the linear homomorphism of the underlying Hom-vector space is the identity map, one gets the usual Lie algebra. In the last fifteen years, many classical studies on Lie algebras have been generalized to Hom-Lie algebras. For instance, in \cite{sheng-rep} Sheng considered representations of Hom-Lie algebras, see also \cite{benayadi}. Subsequently, cohomology and deformation theory of Hom-Lie algebras are extensively studied in \cite{makhlouf1,arfa,makhlouf2}. The notion of $HL_\infty$-algebras (also called strongly homotopy Hom-Lie algebras) was introduced and studied by Sheng and Chen as the homotopy version of Hom-Lie algebras \cite{sheng-hl}. Note that $HL_\infty$-algebras can be realized as a twisted version of $L_\infty$-algebras introduced by Lada and Stasheff \cite{lada-s} (see also \cite{lada-markl}). Other algebraic structures (e.g. associative algebras, bialgebras, Leibniz algebras, pre-Lie algebras) twisted by linear homomorphisms are also widely studied in the literature \cite{arfa,liu-song,makhlouf0,makhlouf3,makhlouf1,makhlouf2,das,larsson-sil,sheng-hl,wang,mishra-naolekar}.

\medskip

Our main aim in this paper is to extend the mathematical study of embedding tensors to the context of Hom-Lie algebras. More precisely, we start by defining embedding tensors on a Hom-Lie algebra with respect to a representation. We find some nice examples of embedding tensors (some of which are new even for Lie algebras). We show that a Hom-Lie algebra together with a representation gives rise to a Hom-Leibniz algebra structure (the hemi-semidirect product). It turns out that the graph of an embedding tensor is a Hom-Leibniz subalgebra of the hemi-semidirect product. As a consequence, we show that an embedding tensor $T$ naturally induces a Hom-Leibniz algebra structure. Next, given a Hom-Lie algebra and a representation, we construct a new graded Lie algebra which we call the `derived graded Lie algebra'. This graded Lie algebra is obtained as a byproduct of Balavoine's graded Lie bracket (that characterizes Hom-Leibniz algebras as Maurer-Cartan elements) and Kosmann-Schwarzbach's derived bracket. We show that an embedding tensor on a Hom-Lie algebra with respect to a representation can be characterized by Maurer-Cartan elements in the corresponding derived graded Lie algebra. As a consequence of this characterization, we can define the cohomology of a given embedding tensor $T.$ We show that the cohomology of the embedding tensor $T$ can be realized as the cohomology of the induced Hom-Leibniz algebra with coefficients in a suitable representation. Moreover, there is a homomorphism from the cohomology of an embedding tensor $T$ to the cohomology of the induced Hom-Leibniz algebra with coefficients in the adjoint representation. Finally, the first cohomology group of the embedding tensor $T$ classifies the set of all equivalence classes of infinitesimal deformations of $T$ (keeping the underlying Hom-Lie algebra and representation intact).

\medskip

In the next part, we mainly deal with the full triple that consists of a Hom-Lie algebra, a representation and an embedding tensor. We call such a triple as Hom-Lie-Leibniz triple. We construct an explicit $L_\infty$-algebra whose Maurer-Cartan elements correspond to Hom-Lie-Leibniz triples. Here the $L_\infty$-algebra is constructed as a byproduct of Balavoine's graded Lie bracket and Voronov's higher derived brackets. Subsequently, we study the cohomology of a Hom-Lie-Leibniz triple (with coefficients in the adjoint representation). We also define representations of a Hom-Lie-Leibniz triple and introduce cohomology with coefficients in any arbitrary representation. We show that our cohomology is helpful to classify infinitesimal deformations of a Hom-Lie-Leibniz triple.

\medskip

In the final part of this paper, we mainly focus on homotopy embedding tensors on a $HL_\infty$-algebra with respect to a representation. Given a $HL_\infty$-algebra and a representation of it, we first construct a new $L_\infty$-algebra. This $L_\infty$-algebra
is the homotopy generalization of the derived graded Lie algebra constructed 
earlier. For this reason, we call this $L_\infty$-algebra as the derived $L_\infty$-algebra. A homotopy embedding tensor is defined by a Maurer-Cartan element of this derived $L_\infty$-algebra. We have observed earlier that an embedding tensor (on a Hom-Lie algebra with respect to a representation) induces a Hom-Leibniz algebra structure. To generalize this result in the homotopy context, we first introduce $HLeib_\infty$-algebras (strongly homotopy Hom-Leibniz algebras). Finally, we show that a homotopy embedding tensor naturally induces a $HLeib_\infty$-algebra structure.

\medskip

We organize the paper as follows. In Section \ref{sec2}, we recall some necessary background on Hom-Lie algebras and Hom-Leibniz algebras. In Section \ref{sec3}, we define and study some basic properties of embedding tensors. In the same section, we also study Maurer-Cartan characterization and cohomology theory of embedding tensors. Hom-Lie-Leibniz triples are considered in Section \ref{sec5} and their Maurer-Cartan characterization is also given. In Section \ref{sec6}, we introduce the cohomology of a Hom-Lie-Leibniz triple and study deformation theory. Finally, homotopy embedding tensors, $HLeib_\infty$-algebras are introduced in Section \ref{sec7} and their relations are discussed.

\medskip

All (graded) vector spaces, linear and multilinear maps,
unadorned tensor products and wedge products are over a field ${\bf k}$ of characteristic $0$.

\medskip

\section{Some background on Hom-Lie algebras and Hom-Leibniz algebras}\label{sec2}
In this section, we recall some basic definitions of Hom-Lie algebras and Hom-Leibniz algebras. In particular, we recall the graded Lie algebra (a generalization of Balavoine's graded Lie algebra) whose Maurer-Cartan elements correspond to Hom-Leibniz algebras. This graded Lie algebra plays a crucial role in our whole study. Our main references are \cite{hls,sheng-rep,makhlouf1,makhlouf3,mukh-saha,loday-pira}.

\begin{defn}\label{defn-hom-lie}
A {\bf Hom-Lie algebra} is a triple $(\mathfrak{g}, [~,~], \alpha)$ consisting of a vector space $\mathfrak{g}$ with a skew-symmetric bilinear bracket $[~,~] : \mathfrak{g} \times \mathfrak{g} \rightarrow \mathfrak{g}$, $(x, y) \mapsto [x,y]$ and a linear map $\alpha : \mathfrak{g} \rightarrow \mathfrak{g}$ (called the twist) satisfying
\begin{align}
&\alpha ([x,y]) = [\alpha (x) , \alpha (y)], \label{mul-condn}\\
[\alpha (x) , [y, z & ]] + [\alpha (y) , [z, x]] + [\alpha (z), [x, y]] = 0, \label{hom-jac}
\end{align}
for $x, y, z \in \mathfrak{g}$. We often denote a Hom-Lie algebra as above by $\mathfrak{g}^{[~,~]}_\alpha$.
\end{defn}

The condition (\ref{mul-condn}) is called the `multiplicativity' and the condition (\ref{hom-jac}) is called the `Hom-Jacobi identity'. For this reason, a Hom-Lie algebra as of Definition \ref{defn-hom-lie} is often called `multiplicative' \cite{makhlouf1,makhlouf3}. A morphism between two Hom-Lie algebras is given by a linear map that preserves the brackets and commutes with the twist maps. 

\begin{exam}
Let $(\mathfrak{g}, [~,~])$ be a Lie algebra and $\alpha : \mathfrak{g} \rightarrow \mathfrak{g}$ be a Lie algebra homomorphism. Then $\mathfrak{g}^{\alpha \circ [~,~]}_\alpha = (\mathfrak{g} , \alpha \circ [~,~], \alpha)$ is a Hom-Lie algebra. In fact, $\mathfrak{g}^{\alpha^n \circ [~,~]}_{\alpha^n}$ is a Hom-Lie algebra, for any $n \geq 1$.
\end{exam}

\begin{exam}
    Let $\mathfrak{g}$ be the $4$-dimensional vector space spanned by the set $\{ e_1, e_2, e_3, e_4 \}$. We define a bilinear skewsymmetric bracket $[~,~] : \mathfrak{g} \times \mathfrak{g} \rightarrow \mathfrak{g}$ and a linear map $\alpha : \mathfrak{g} \rightarrow \mathfrak{g}$ by
    \begin{align*}
        [e_1, e_2] = - ae_3,& ~~~~ [e_1, e_3] = b e_2, ~~~~ [e_1, e_4] = 0, ~~~~ [e_2, e_3 ]= 0, ~~~~ [e_2, e_4] = - a e_2, ~~~~ [e_3, e_4] = a e_3, \\
    &    \alpha (e_1) = - e_1, ~~~~ \alpha (e_2) = e_2, ~~~~ \alpha (e_3) = - e_3 ~~~ \text{ and } ~~~ \alpha (e_4) = e_4,
    \end{align*}
    where $a, b$ are any scalars. Then it is easy to verify that $\mathfrak{g}^{ [~,~]}_\alpha$ is a Hom-Lie algebra.
\end{exam}

\begin{exam}
    Let $(A, \mu, \alpha)$ be a Hom-associative algebra \cite{makhlouf0}. That is, $A$ is a vector space equipped with a bilinear map $\mu : A \times A \rightarrow A,~ (a, b) \mapsto a \cdot b$ and a linear map $\alpha : A \rightarrow A$ that satisfies
    \begin{align*}
        \alpha (a \cdot b) = \alpha (a) \cdot \alpha (b) ~~~~ \text{ and } ~~~~ \alpha (a) \cdot (b \cdot c) = (a \cdot b) \cdot \alpha (c), \text{ for } a, b, c \in A.
    \end{align*}
    In this case, $A_\alpha^{ [~,~] }$ is a Hom-Lie algebra, where $[a, b] = a \cdot b - b \cdot a$, for $a, b \in A$.
\end{exam}

\begin{defn}
Let $\mathfrak{g}^{[~,~]}_\alpha$ be a Hom-Lie algebra. A {\bf representation} of this Hom-Lie algebra is a vector space $V$ equipped with linear maps $\rho : \mathfrak{g} \rightarrow \mathrm{End}(V)$ and $\beta : V \rightarrow V$ satisfying
\begin{align*}
& \qquad \qquad \beta \big( \rho (x) (v) \big) = \rho (\alpha (x)) (\beta (v)),\\
&\rho (\alpha (x)) \circ \rho (y) - \rho (\alpha (y)) \circ \rho(x) = \rho ([x,y]) \circ \beta,
\end{align*} 
for $x, y \in \mathfrak{g}$ and $v \in V$.
We denote a representation as above by $V^\rho_\beta$. 
\end{defn} 

It follows from the above definition that any Hom-Lie algebra $\mathfrak{g}^{[~,~]}_\alpha$ can be regarded as a representation of itself, where $\rho : \mathfrak{g} \rightarrow \mathrm{End}(\mathfrak{g})$ is given by $\rho (x) (y) = [x, y]$, for $x, y \in \mathfrak{g}$. This is called the 'adjoint representation'.

\medskip

\begin{defn}\label{hom-leib-defn}
A {\bf Hom-Leibniz algebra} is a triple $\mathfrak{h}^{ \{ ~,~ \} }_\alpha = (\mathfrak{h}, \{ ~,~ \}, \alpha)$ consisting of a vector space $\mathfrak{h}$ with a bilinear bracket $\{ ~,~ \} : \mathfrak{h} \times \mathfrak{h} \rightarrow \mathfrak{h}$ and a linear map $\alpha : \mathfrak{h} \rightarrow \mathfrak{h}$ that satisfying
\begin{align}\label{hom-leib-iden}
\alpha ( \{ x,y \}) = \{ \alpha (x) , \alpha (y) \} \quad \text{ and } \quad \{ \alpha (x) , \{ y,z \} \} = \{ \{ x,y \}, \alpha (z) \} + \{ \alpha (y), \{ x,z \} \}, \text{ for } x, y, z \in \mathfrak{h}.
\end{align}
\end{defn}

A Hom-Leibniz algebra whose bracket is skewsymmetric turns out to be a Hom-Lie algebra. Hence a Hom-Leibniz algebra can be realized as a non-skewsymmetric analogue of a Hom-Lie algebra.

\begin{exam}
Let $(\mathfrak{h}, \{ ~,~ \})$ be a Leibniz algebra and $\alpha : \mathfrak{h} \rightarrow \mathfrak{h}$ be a Leibniz algebra homomorphism. Then $\mathfrak{h}^{\alpha \circ \{ ~,~ \}}_\alpha = (\mathfrak{h}, \alpha \circ \{ ~,~ \}, \alpha)$ is a Hom-Leibniz algebra.
\end{exam}

\begin{exam}\label{hemi-exam}
Let $\mathfrak{g}^{ [ ~,~ ] }_\alpha$  be a Hom-Lie algebra and $V^\rho_\beta$ be a representation of it. Then $(\mathfrak{g} \oplus V)^{ \{ ~, ~\} }_{ \alpha \oplus \beta}$ is a Hom-Leibniz algebra, where
\begin{align*}
\{ (x, u), (y, v) \} = ([x, y], \rho (x) v), \text{ for } (x, u), (y, v) \in \mathfrak{g} \oplus V.
\end{align*}
This is called the hemi-semidirect product.
\end{exam}

Let $\mathfrak{h}^{ \{ ~,~ \} }_\alpha$ be a Hom-Leibniz algebra. A {\bf representation} of the Hom-Leibniz algebra $\mathfrak{h}^{ \{ ~,~ \}}_\alpha$ is a vector space $V$ equipped with two bilinear maps (called left and right $\mathfrak{h}$-actions, respectively)
\begin{align*}
\rho^L : \mathfrak{h} \times V \rightarrow V, ~(x, v) \mapsto \rho^L (x, v) \quad \text{ and } \quad \rho^R : V \times \mathfrak{h} \rightarrow V, ~(v,x) \mapsto \rho^R (v,x)
\end{align*}
and a linear map $\beta : V \rightarrow V$ satisfying the following set of identities: for $x, y \in \mathfrak{g}$ and $v \in V$,
\begin{align}
\beta (\rho^L(x,v)) = \rho^L (\alpha (x) & , \beta(v)), \quad \beta (\rho^R (v,x))= \rho^R (\beta (v) , \alpha(x)), \\
\rho^L (\alpha (x), \rho^L (y, v)) =~& \rho^L (\{ x,y \}, \beta (v)) + \rho^L (\alpha (y), \rho^L (x, v)),\\
\rho^L (\alpha (x), \rho^R (v, y)) =~& \rho^R (\rho^L (x, v), \alpha (y)) + \rho^R (\beta (v) , \{ x, y \} ),\\
\rho^R (\beta (v) , \{ x, y \}) =~& \rho^R (\rho^R (v, x), \alpha (y)) + \rho^L (\alpha (x), \rho^R (v, y)),
\end{align}
for $x, y \in \mathfrak{h}$ and $v \in V$. We denote a representation as above by $V^{\rho^L, \rho^R}_\beta$. It follows that any Hom-Leibniz algebra $\mathfrak{h}^{\{ ~,~ \}}_\alpha$ can be realized as a representation $\mathfrak{h}^{\{ ~,~ \}, \{ ~,~ \} }_\alpha$ of itself. We call this as the adjoint representation.

Let $\mathfrak{h}^{ \{ ~,~ \} }_\alpha$ be a Hom-Leibniz algebra and $V^{\rho^L, \rho^R}_\beta$ be a representation of it. Then the $n$-th cochain group $C^n (\mathfrak{h}_\alpha, V_\beta)$ of the Hom-Leibniz algebra $\mathfrak{h}^{\{ ~,~ \}}_\alpha$ with coefficients in $V^{\rho^L, \rho^R}_\beta$ is given by
\begin{align*}
C^n (\mathfrak{h}_\alpha, V_\beta ) = \begin{cases}
 \{ v \in V ~|~ \beta (v) = v \} & \text{ if } n=0,\\
 \mathrm{Hom} (\mathfrak{h}_\alpha^{\otimes n}, V_\beta) = \{ f \in \mathrm{Hom} (\mathfrak{h}^{\otimes n}, V) ~|~ \beta \circ f = f \circ \alpha^{\otimes n} \} & \text{ if } n \geq 1.
\end{cases}
\end{align*}
The coboundary map $\delta_\mathrm{HLeib} : C^n (\mathfrak{h}_\alpha, V_\beta ) \rightarrow C^{n+1} (\mathfrak{h}_\alpha, V_\beta )$ is given by
\begin{align}\label{leib-diff}
(\delta_\mathrm{HLeib} f) (x_1, \ldots, x_{n+1}) =~& \sum_{i=1}^n (-1)^{i+1} \rho^L \big(  \alpha^{n-1} (x_i), f (x_1, \ldots, \widehat{x_i}, \ldots, x_{n+1})  \big)  \\
&+ \sum_{1 \leq i < j \leq n+1} (-1)^i f \big(  \alpha (x_1), \ldots, \widehat{x_i}, \ldots, \alpha (x_{j-1}), \{ x_i, x_j \}, \alpha (x_{j+1}), \ldots, \alpha (x_{n+1})   \big) \nonumber  \\
&+ (-1)^{n-1} \rho^R \big( f (x_1, \ldots, x_n), \alpha^{n-1} (x_{n+1})   \big), \nonumber
\end{align}
for $f \in C^n (\mathfrak{h}_\alpha, V_\beta )$ and $x_i, \ldots, x_{n+1} \in \mathfrak{h}$. The cohomology group of the complex $\{ C^\bullet (\mathfrak{h}_\alpha, V_\beta ), \delta_\mathrm{HLeib} \}$ is called the {\bf Loday-Pirashvili cohomology} of the Hom-Leibniz algebra $\mathfrak{h}^{ \{ ~,~\}}_\alpha$ with coefficients in $V^{\rho^L, \rho^R}_\beta$ and denoted by $H^\bullet_\mathrm{HLeib} (\mathfrak{h}_\alpha^{ \{ ~, ~ \} }, V_\beta^{\rho^L, \rho^R })$.

\medskip

Let $\mathfrak{h}$ be a vector space and $\alpha : \mathfrak{h} \rightarrow \mathfrak{h}$ be a linear map. Then the graded vector space
\begin{align*}
\oplus_{n=1}^\infty C^n (\mathfrak{h}_\alpha, \mathfrak{h}_\alpha), \text{ where } C^{n \geq 1} (\mathfrak{h}_\alpha, \mathfrak{h}_\alpha) = \mathrm{Hom} (\mathfrak{h}_\alpha^{\otimes n}, \mathfrak{h}_\alpha)= \{ f \in \mathrm{Hom} (\mathfrak{h}^{\otimes n}, \mathfrak{h}) ~|~ \alpha \circ f = f \circ \alpha^{\otimes n} \}
\end{align*} 
carries a degree $-1$ bracket (called the {\bf Balavoine's bracket} \cite{bala}) given by
\begin{align}\label{bala-bracket}
[P, Q]_B := P \diamond Q - (-1)^{(m-1)(n-1)} Q \diamond P, \text{ where }
\end{align}
\begin{align}
(P \diamond Q)(x_1, \ldots, x_{m+n-1}) = \sum_{i=1}^m & \sum_{\sigma \in \mathbb{S}_{(i-1, n-1)}}  (-1)^{(i-1)(n-1)} (-1)^\sigma P \big( \alpha^{n-1} ( x_{\sigma (1)} ), \ldots, \alpha^{n-1} (x_{\sigma (i-1)}), \\ & Q ( x_{\sigma (i)}, \ldots, x_{\sigma (i+n-2)}, x_{i+n-1} ), \alpha^{n-1} ( x_{i+n}), \ldots, \alpha^{n-1} ( x_{m+n-1})   \big), \nonumber
\end{align}
for $P \in C^m (\mathfrak{h}_\alpha, \mathfrak{h}_\alpha)$, $Q \in C^n (\mathfrak{h}_\alpha, \mathfrak{h}_\alpha)$ and $x_1, \ldots, x_{m+n-1} \in \mathfrak{h}$. The Balavoine's bracket makes the shifted graded vector space $ C^{\bullet +1} (\mathfrak{h}_\alpha, \mathfrak{h}_\alpha) = \oplus_{n=0}^\infty C^{n+1} (\mathfrak{h}_\alpha, \mathfrak{h}_\alpha)$ into a graded Lie algebra. The following result gives the importance of this graded Lie algebra.

\begin{thm}
Let $\mathfrak{h}$ be a vector space and $\alpha: \mathfrak{h} \rightarrow \mathfrak{h}$ be a linear map. Then there is a one-to-one correspondence between Maurer-Cartan elements in the graded Lie algebra $( \oplus_{n=0}^\infty C^{n + 1} (\mathfrak{h}_\alpha, \mathfrak{h}_\alpha), [~,~]_B)$ and bilinear brackets $\{ ~,~ \} : \mathfrak{h} \times \mathfrak{h} \rightarrow \mathfrak{h}$ which make $\mathfrak{h}^{ \{ ~,~\} }_\alpha$ into a Hom-Leibniz algebra.
\end{thm}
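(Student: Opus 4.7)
The plan is to unravel what it means for $\mu \in \oplus_{n \geq 0} C^{n+1}(\mathfrak{h}_\alpha, \mathfrak{h}_\alpha)$ of degree $1$ to satisfy the Maurer-Cartan equation $[\mu, \mu]_B = 0$, and match the result with the two conditions in Definition \ref{hom-leib-defn}. A degree $1$ element is by definition a bilinear map $\mu : \mathfrak{h} \times \mathfrak{h} \rightarrow \mathfrak{h}$ lying in $C^2(\mathfrak{h}_\alpha, \mathfrak{h}_\alpha)$, and membership in this cochain group already encodes the relation $\alpha \circ \mu = \mu \circ \alpha^{\otimes 2}$, i.e., the multiplicativity condition $\alpha(\{x, y\}) = \{\alpha(x), \alpha(y)\}$ once we write $\{x, y\} := \mu(x, y)$.

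Next I would compute $[\mu, \mu]_B$ directly from (\ref{bala-bracket}). Since $\mu$ has degree $1$, $[\mu, \mu]_B = 2\, \mu \diamond \mu$, so Maurer-Cartan is equivalent to $\mu \diamond \mu = 0$. Applying the defining formula with $m = n = 2$, the outer sum has two terms ($i=1$ and $i=2$), and the inner shuffle sums have sizes $|\mathbb{S}_{(0,1)}| = 1$ and $|\mathbb{S}_{(1,1)}| = 2$ respectively. Writing out each contribution yields
\begin{align*}
(\mu \diamond \mu)(x_1, x_2, x_3) = \mu\bigl(\mu(x_1, x_2), \alpha(x_3)\bigr) - \mu\bigl(\alpha(x_1), \mu(x_2, x_3)\bigr) + \mu\bigl(\alpha(x_2), \mu(x_1, x_3)\bigr),
\end{align*}
where the sign $-1$ in the second term comes from $(-1)^{(i-1)(n-1)} = -1$ at $i=2$, and the sign $+1$ in the third term comes from the combined contribution of $(-1)^{(i-1)(n-1)}$ and the shuffle sign of the transposition $(1\,2) \in \mathbb{S}_{(1,1)}$.

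Setting the right-hand side equal to zero and rearranging, one recovers exactly the second identity in (\ref{hom-leib-iden}) with $\{x, y\} := \mu(x, y)$. Conversely, given any Hom-Leibniz structure $\mathfrak{h}^{\{~,~\}}_\alpha$, the bracket $\mu := \{~,~\}$ belongs to $C^2(\mathfrak{h}_\alpha, \mathfrak{h}_\alpha)$ by multiplicativity, and the Hom-Leibniz identity forces $\mu \diamond \mu = 0$, hence $[\mu,\mu]_B = 0$. This gives the two-way correspondence.

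No step here is a genuine obstacle; the result is a direct unfolding of definitions, and the only care required is bookkeeping of signs in the expansion of $\mu \diamond \mu$ (keeping track of $(-1)^{(i-1)(n-1)}$ together with shuffle signs) and noticing that the cochain condition on $\mu$ is itself the multiplicativity axiom. The same style of computation already appears in \cite{bala} for Leibniz algebras (with trivial twist), and all that is being checked is that the twisted version with $\alpha^{n-1}$ inserted on the unoperated arguments still reproduces the Hom-Leibniz identity.
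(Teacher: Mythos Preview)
Your proof is correct and takes essentially the same approach as the paper. The paper itself does not give a detailed proof of this theorem, merely asserting in the paragraph following the statement that ``it is easy to see that the Hom-Leibniz identity \ldots\ is equivalent to $[\pi,\pi]_B = 0$''; your explicit unfolding of $\mu \diamond \mu$ with the sign bookkeeping supplies exactly the computation that justifies this assertion.
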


Let $\mathfrak{h}^{ \{ ~, ~ \} }_\alpha$ be a Hom-Leibniz algebra. Then the bracket $\{ ~, ~ \} : \mathfrak{h} \times \mathfrak{h} \rightarrow \mathfrak{h}$ defines an element $\pi \in C^2 (\mathfrak{h}_\alpha, \mathfrak{h}_\alpha)$ given by
\begin{align*}
    \pi (x, y) = \{ x, y \}, \text{ for } x, y \in \mathfrak{h}.
\end{align*}
It is easy to see that the Hom-Leibniz identity (the second identity of (\ref{hom-leib-iden})) of the bracket $\{ ~, ~ \}$ is equivalent to $[\pi, \pi]_B = 0$. In other words, $\pi$ is a Maurer-Cartan element in the graded Lie algebra  $( \oplus_{n=0}^\infty C^{n + 1} (\mathfrak{h}_\alpha, \mathfrak{h}_\alpha), [~,~]_B)$. With these notations, the coboundary map (\ref{leib-diff}) of the Hom-Leibniz algebra $\mathfrak{h}^{ \{ ~, ~ \} }_\alpha$ with coefficients in the adjoint representation is given by
\begin{align*}
    \delta_\mathrm{HLeib} (f) =  (-1)^{n-1} [\pi, f]_B, \text{ for } f \in C^{n \geq 1} (\mathfrak{h}_\alpha, \mathfrak{h}_\alpha).
\end{align*}


\medskip

\section{Embedding tensors on Hom-Lie algebras and their cohomology}\label{sec3}
In this section, we introduce embedding tensors on Hom-Lie algebras. First, we show that an embedding tensor naturally gives rise to a Hom-Leibniz algebra structure. Next, we construct the derived graded Lie algebra that characterizes embedding tensors as its Maurer-Cartan elements. Finally, we define the cohomology of an embedding tensor and study some consequences of the cohomology.


\begin{defn}
Let $\mathfrak{g}^{[~,~]}_\alpha$ be a Hom-Lie algebra and $V^\rho_\beta$ be a representation of it. A linear map $T: V \rightarrow \mathfrak{g}$ is said to be an {\bf embedding tensor} (on $\mathfrak{g}^{[~,~]}_\alpha$ with respect to the representation $V^\rho_\beta$) if
\begin{align}\label{embed-iden}
\alpha \circ T = T \circ \beta ~~~~ \text{ and } ~~~~ [T(u), T(v)] = T \big( \rho (Tu) v \big), \text{ for } u, v \in V.
\end{align}
\end{defn}

\begin{exam}
Let $\mathfrak{g}^{[~,~]}_\alpha$ be a Hom-Lie algebra. Then the identity map $\mathrm{id}: \mathfrak{g} \rightarrow \mathfrak{g}$ is an embedding tensor on the Hom-Lie algebra $\mathfrak{g}^{[~,~]}_\alpha$ with respect to the adjoint representation.
\end{exam}

\begin{exam}
Let $\mathfrak{g}^{[~,~]}_\alpha$ be a Hom-Lie algebra. A linear map $d: \mathfrak{g} \rightarrow \mathfrak{g}$ is said to be a derivation for the Hom-Lie algebra $\mathfrak{g}^{[~,~]}_\alpha$ if $\alpha \circ d = d \circ \alpha$ and 
$
d [x,y] = [dx, y] + [x, dy], \text{ for all } x, y \in \mathfrak{g}.
$
If $d$ is a square-zero derivation, then $d$ is an embedding tensor on $\mathfrak{g}^{ [~,~]}_\alpha$ with respect to the adjoint representation.
\end{exam}

\begin{exam}
Let $\mathfrak{g}^{[~,~]}_\alpha$ be a Hom-Lie algebra. Then it can be easily checked that $(\underbrace{\mathfrak{g} \oplus \cdots \oplus \mathfrak{g}}_{n\mathrm{~ times}})^\rho_{\alpha \oplus \cdots \oplus \alpha}$ is a representation of the Hom-Lie algebra $\mathfrak{g}^{[~,~]}_\alpha$, where 
\begin{align*}
\rho : \mathfrak{g} \rightarrow \mathrm{End} (\mathfrak{g} \oplus \cdots \oplus \mathfrak{g}), ~~ \rho (x) \big(  (x_1, \ldots, x_n)  \big) = ([x, x_1], \ldots, [x, x_n]), 
\end{align*}
for $ x \in \mathfrak{g}$ and $(x_1, \ldots, x_n) \in \mathfrak{g} \oplus \cdots \oplus \mathfrak{g}.$ Then the map $T : \mathfrak{g} \oplus \cdots \oplus \mathfrak{g} \rightarrow \mathfrak{g}$ is an embedding tensor, where
\begin{align*}
T \big( (x_1, \ldots, x_n ) \big) = x_1 + \cdots + x_n, \text{ for } (x_1, \ldots, x_n) \in \mathfrak{g} \oplus \cdots \oplus \mathfrak{g}.
\end{align*}
\end{exam}

\begin{exam}
With the notations of the previous example, the $i$-th projection map 
\begin{align*}
T_i : \mathfrak{g} \oplus \cdots \oplus \mathfrak{g} \rightarrow \mathfrak{g},~~ T_i \big( (x_1, \ldots, x_n) \big) = x_i
\end{align*}
 is an embedding tensor on $\mathfrak{g}^{ [~,~]}_\alpha$ with respect to the representation $(\underbrace{\mathfrak{g} \oplus \cdots \oplus \mathfrak{g}}_{n\mathrm{~ times}})^\rho_{\alpha \oplus \cdots \oplus \alpha}$.
\end{exam}

\begin{exam}
Let $\mathfrak{g}^{[~,~]}_\alpha$ be a Hom-Lie algebra and $V^\rho_\beta$ be a representation of it. Let $f : V \rightarrow \mathfrak{g}$ be a $\mathfrak{g}$-equivariant map, i.e., $f$ is a linear map satisfying
\begin{align*}
\alpha \circ f = f \circ \beta ~~~~ \text{ and } ~~~~ f (\rho (x) v) = [x, f(v)], \text{ for } x \in \mathfrak{g}, v \in V.
\end{align*}
Then it is easy to see that $f$ is an embedding tensor on $\mathfrak{g}^{[~,~]}_\alpha$ with respect to the representation $V^\rho_\beta$.
\end{exam}

\begin{exam}
(Crossed modules of Hom-Lie algebras \cite{sheng-hl}) A crossed module of Hom-Lie algebras is a quadruple $\big(  (\mathfrak{g}_1)^{[~,~]_1}_{\alpha_1}, (\mathfrak{g}_2)^{[~,~]_2}_{\alpha_2}, d, \rho \big)$ in which $(\mathfrak{g}_1)^{[~,~]_1}_{\alpha_1}, ~(\mathfrak{g}_2)^{[~,~]_2}_{\alpha_2}$ are Hom-Lie algebras, $d: \mathfrak{g}_2 \rightarrow \mathfrak{g}_1$ is a morphism of Hom-Lie algebras and $\rho : \mathfrak{g}_1 \rightarrow \mathrm{End}(\mathfrak{g}_2)$ is a linear map that makes $(\mathfrak{g}_2)^\rho_{\alpha_2}$ into a representation of the Hom-Lie algebra $(\mathfrak{g}_1)_{\alpha_1}^{[~,~]_1}$, satisfying additionally
\begin{align}\label{cr-mod}
    d (\rho (x) m) = [x, dm]_1 ~~~~ \text{ and } ~~~~~ \rho (dm) n = [m, n]_2, \text{ for } x \in \mathfrak{g}_1, m, n \in \mathfrak{g}_2.
\end{align}
It follows from the first condition of (\ref{cr-mod}) that the map $d$ is an embedding tensor on the Hom-Lie algebra $(\mathfrak{g}_1)^{[~,~]_1}_{\alpha_1}$ with respect to the representation $(\mathfrak{g}_2)^\rho_{\alpha_2}$.
\end{exam}

In \cite{sheng-hl} the authors introduced Hom-Lie $2$-algebras and showed that `strict' Hom-Lie $2$-algebras correspond to crossed modules of Hom-Lie algebras. Hence, by following our previous example, one could construct embedding tensors from strict Hom-Lie $2$-algebras.

\begin{defn}\label{em-ten-mor}
Let $T: V \rightarrow \mathfrak{g}$ be an embedding tensor on a Hom-Lie algebra $\mathfrak{g}$ with respect to the representation $V_\beta^\rho$. Let $T'$ be another embedding tensor on the Hom-Lie algebra $(\mathfrak{g}')_{\alpha'}^{[~,~]'}$ with respect to the representation $(V')_{\beta'}^{\rho'}$. A {\bf homomorphism} of embedding tensors from $T$ to $T'$ is given by a pair $(\varphi, \psi)$ in which $\varphi : \mathfrak{g} \rightarrow \mathfrak{g}'$ is a Hom-Lie algebra morphism, $\psi : V \rightarrow V'$ is a linear map satisfying
\begin{align*}
   \beta' \circ \psi = \psi \circ \beta , \quad \psi (\rho (x) v) = \rho' (\varphi (x)) (\psi (v)) ~~~ \text{ and } ~~~ \varphi \circ T = T' \circ \psi, \text{ for } x \in \mathfrak{g}, v \in V.
\end{align*}
\end{defn}

\medskip

In the following result, we characterize embedding tensors by their graphs. This characterization can be used to construct the induced Hom-Leibniz algebra associated with an embedding tensor.

\begin{prop}
Let $\mathfrak{g}^{[~,~]}_\alpha$ be a Hom-Lie algebra and $V^\rho_\beta$ be a representation of it. A linear map $T: V \rightarrow \mathfrak{g}$ is an embedding tensor (on the Hom-Lie algebra $\mathfrak{g}^{[~,~]}_\alpha$ with respect to the representation $V^\rho_\beta$) if and only if the graph
\begin{align*}
Gr (T) = \{ (T(u), u)~|~ u \in V   \} \subset \mathfrak{g} \oplus V
\end{align*}
is a Hom-Leibniz subalgebra of the hemi-semidirect product $(\mathfrak{g} \oplus V)^{   \{ ~,~ \} }_{\alpha \oplus \beta}$ (see Example \ref{hemi-exam}).
\end{prop}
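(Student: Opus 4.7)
The plan is to unpack what it means for $\mathrm{Gr}(T)$ to be a Hom-Leibniz subalgebra of the hemi-semidirect product and match the resulting conditions with the two defining identities of an embedding tensor in \eqref{embed-iden}.

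First I would observe that a subspace $W \subset \mathfrak{h}$ of a Hom-Leibniz algebra $\mathfrak{h}^{\{~,~\}}_\alpha$ is a Hom-Leibniz subalgebra precisely when it is closed under the twist $\alpha$ and under the bracket $\{~,~\}$; the Hom-Leibniz identity is then inherited automatically. So for $\mathrm{Gr}(T)\subset \mathfrak{g}\oplus V$ I only need to check these two closure properties with respect to $\alpha\oplus\beta$ and the hemi-semidirect bracket from Example \ref{hemi-exam}.

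Next I would compute each closure condition in turn. For the twist, $(\alpha\oplus\beta)(T(u),u)=(\alpha(T(u)),\beta(u))$ lies in $\mathrm{Gr}(T)$ iff $\alpha(T(u))=T(\beta(u))$, which is exactly $\alpha\circ T=T\circ\beta$. For the bracket, using the hemi-semidirect formula,
\begin{align*}
\{(T(u),u),(T(v),v)\}=\bigl([T(u),T(v)],\,\rho(T(u))v\bigr),
\end{align*}
and this element lies in $\mathrm{Gr}(T)$ iff $[T(u),T(v)]=T(\rho(T(u))v)$, which is precisely the second condition of \eqref{embed-iden}. Conversely, assuming $T$ is an embedding tensor, both identities hold and the two closure properties are verified, so $\mathrm{Gr}(T)$ is a Hom-Leibniz subalgebra.

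There is essentially no obstacle here beyond a direct verification; the content of the statement is the observation that the two axioms for an embedding tensor are exactly the twist-closure and bracket-closure conditions for the graph in the hemi-semidirect product. The main value of the proposition, which I would flag for use in the next paragraphs, is the corollary that the bracket of the hemi-semidirect product restricts to $\mathrm{Gr}(T)\cong V$, transporting via $u\mapsto(T(u),u)$ to the bracket $\{u,v\}_T:=\rho(T(u))v$ on $V$, and that $V^{\{~,~\}_T}_\beta$ is thereby a Hom-Leibniz algebra (the induced structure on $V$).
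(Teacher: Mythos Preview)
Your proof is correct and follows essentially the same approach as the paper: both reduce the subalgebra condition to closure under $\alpha\oplus\beta$ and under the hemi-semidirect bracket, and then identify these two closure conditions with the identities $\alpha\circ T=T\circ\beta$ and $[T(u),T(v)]=T(\rho(Tu)v)$ respectively. Your final remark about the induced Hom-Leibniz structure on $V$ also matches exactly the corollary the paper draws immediately afterward.
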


\begin{proof}
Note that $Gr(T)$ is a Hom-Leibniz subalgebra of  $(\mathfrak{g} \oplus V)^{   \{ ~,~ \} }_{\alpha \oplus \beta}$ if and only if
\begin{align*}
(\alpha \oplus \beta) (Gr (T)) \subset Gr (T) ~~~~ \text{ and } ~~~~ \{ Gr(T), Gr(T) \} \subset Gr (T).
\end{align*}
For any $u \in V$, we consider the element $(T(u), u) \in Gr(T)$. Then 
\begin{align*}
(\alpha \oplus \beta) (T(u), u) = (\alpha T (u), \beta (u)) \in Gr(T) \text{ if and only if } \alpha \circ T = T \circ \beta.
\end{align*}
Similarly, for any $u, v \in V$, we have
\begin{align*}
\{ (T(u), u), (T(v), v) \} = ([T(u), T(v)], \rho (Tu) v).
\end{align*}
This is in $Gr(T)$ if and only if the second identity of (\ref{embed-iden}) holds. Hence $Gr(T)$ is a Hom-Leibniz subalgebra of $(\mathfrak{g} \oplus V)^{ \{ ~,~ \} }_{\alpha \oplus \beta}$ if and only if $T$ is an embedding tensor.
\end{proof}

Since $Gr (T)$ can be identified with the space $V$, we have the following consequence from our previous proposition.

\begin{prop}\label{induced-hl}
    Let $T: V \rightarrow \mathfrak{g}$ be an embedding tensor on a Hom-Lie algebra with respect to the representation $V^\rho_\beta$. Then $V_\beta^{ \{ ~, ~ \} }$ is a Hom-Leibniz algebra (called the induced Hom-Leibniz algebra), where the bracket $\{ ~, ~ \}: V \times V \rightarrow V$ is given by
    \begin{align*}
        \{ u, v \} := \rho (Tu) v, \text{ for } u, v \in V.
    \end{align*}
\end{prop}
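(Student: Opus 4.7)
The plan is to deduce this proposition as a direct corollary of the previous one, via the linear isomorphism $V \xrightarrow{\cong} Gr(T)$, $u \mapsto (T(u), u)$, which transports the Hom-Leibniz subalgebra structure on $Gr(T) \subset (\mathfrak{g} \oplus V)^{\{~,~\}}_{\alpha \oplus \beta}$ back to $V$. First I would compute that under this isomorphism, the bracket of the hemi-semidirect product restricted to $Gr(T)$ is
\begin{align*}
\{(T(u), u), (T(v), v)\} = ([T(u), T(v)], \rho(Tu) v) = (T(\rho(Tu) v), \rho(Tu) v),
\end{align*}
where the second equality uses the defining identity $[T(u), T(v)] = T(\rho(Tu) v)$ of an embedding tensor. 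This element lies in $Gr(T)$, and its second component (which is its representative under the isomorphism with $V$) is exactly $\rho(Tu) v$. Similarly, the twist $\alpha \oplus \beta$ restricted to $Gr(T)$ sends $(T(u), u)$ to $(\alpha T(u), \beta(u)) = (T(\beta u), \beta u)$, matching $\beta$ on $V$.

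Having identified the transported structure with $(\{~,~\}, \beta)$ on $V$, the Hom-Leibniz algebra axioms for $V^{\{~,~\}}_\beta$ follow automatically from those for $Gr(T)$, which in turn are inherited from the hemi-semidirect product $(\mathfrak{g} \oplus V)^{\{~,~\}}_{\alpha \oplus \beta}$ constructed in Example \ref{hemi-exam}.

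As a sanity check (and alternative route if a self-contained direct verification is preferred), one can verify the two Hom-Leibniz axioms of Definition \ref{hom-leib-defn} directly. Multiplicativity $\beta(\{u,v\}) = \{\beta u, \beta v\}$ reduces to $\beta(\rho(Tu) v) = \rho(\alpha Tu)(\beta v) = \rho(T \beta u)(\beta v)$, which combines the intertwining property of the representation with $\alpha \circ T = T \circ \beta$. The Hom-Leibniz identity
\begin{align*}
\{\beta u, \{v, w\}\} = \{\{u, v\}, \beta w\} + \{\beta v, \{u, w\}\}
\end{align*}
unfolds to
\begin{align*}
\rho(\alpha T u) \rho(T v) w = \rho([T u, T v])(\beta w) + \rho(\alpha T v) \rho(T u) w,
\end{align*}
after using $T \circ \beta = \alpha \circ T$ on the outer $T$'s and $[Tu, Tv] = T(\rho(Tu) v)$ on the first term of the right-hand side. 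This is precisely the compatibility $\rho(\alpha x) \circ \rho(y) - \rho(\alpha y) \circ \rho(x) = \rho([x,y]) \circ \beta$ of the representation $V^\rho_\beta$ evaluated at $x = Tu$, $y = Tv$, and applied to $w$.

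No step is genuinely hard: the real content is already packaged in the previous proposition, and the only subtlety worth noting is that one must use both defining conditions of an embedding tensor (the twist-compatibility $\alpha T = T \beta$ as well as the bracket identity) together with both axioms of a representation.
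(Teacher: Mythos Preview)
Your proposal is correct and follows essentially the same route as the paper: the paper simply remarks that since $Gr(T)$ can be identified with $V$, the proposition is an immediate consequence of the preceding one about $Gr(T)$ being a Hom-Leibniz subalgebra of the hemi-semidirect product. You have spelled out this transport of structure explicitly (and even added a direct verification of the axioms as a sanity check), which is more detail than the paper provides, but the underlying idea is identical.
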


\medskip

In the following, we give another characterization of embedding tensors. Namely, given a Hom-Lie algebra and a representation of it, we construct a graded Lie algebra whose Maurer-Cartan elements correspond to embedding tensors. Using the help of such characterization, we also define the cohomology of an embedding tensor. Finally, we realize the cohomology of an embedding tensor as the Loday-Pirashvili cohomology of the induced Hom-Leibniz algebra with coefficients in a suitable representation.

Let $\mathfrak{g}^{ [ ~, ~ ] }_\alpha$ be a Hom-Lie algebra and $V^\rho_\beta$ be a representation of it. We consider the direct sum vector space $ \mathfrak{h} = \mathfrak{g} \oplus V$ with the linear map $\alpha \oplus \beta:  \mathfrak{g} \oplus V \rightarrow  \mathfrak{g} \oplus V$. Let
\begin{align*}
    \big(   \oplus_{n=0}^\infty C^{n+1} ( (\mathfrak{g} \oplus V)_{\alpha \oplus \beta},  (\mathfrak{g} \oplus V)_{\alpha \oplus \beta}) , [~,~]_B \big)
\end{align*}
be the graded Lie algebra with Balavoine's bracket. Since the hemi-semidirect product $(\mathfrak{g} \oplus V)^{ \{ ~, ~ \} }_{\alpha \oplus \beta}$ is a Hom-Leibniz algebra, it gives rise to an element $\pi \in C^2 (  (\mathfrak{g} \oplus V)_{\alpha \oplus \beta},  (\mathfrak{g} \oplus V)_{\alpha \oplus \beta})  )$ that satisfies $[\pi, \pi]_B = 0$. Explicitly, the element $\pi$ is given by
\begin{align}\label{defn-pi}
    \pi \big(  (x,u), (y,v)  \big) = ([x,y], \rho (x) v), \text{ for } (x,u), (y,v) \in \mathfrak{g} \oplus V.
\end{align}
Further, it is easy to check that the graded subspace
\begin{align*}
    \oplus_{n=0}^\infty C^{n+1} (V_\beta, \mathfrak{g}_\alpha) ~ \subset ~ \oplus_{n=0}^\infty C^{n+1} ( (\mathfrak{g} \oplus V)_{\alpha \oplus \beta},  (\mathfrak{g} \oplus V)_{\alpha \oplus \beta})
\end{align*}
is an abelian Lie subalgebra. Thus, the shifted (shifted by degree $1$) subspace $ \oplus_{n=1}^\infty C^{n} (V_\beta, \mathfrak{g}_\alpha)$ inherits a graded Lie algebra structure with the Kosmann-Schwarzbach's derived bracket \cite{yks}. In our case, the bracket is given by the following theorem.

\begin{thm}
Let $\mathfrak{g}^{ [ ~, ~ ]}_\alpha$ be a Hom-Lie algebra and $V^\rho_\beta$ be a representation of it. Then the graded vector space $C^\bullet (V_\beta, \mathfrak{g}_\alpha) = \oplus_{n=1}^\infty C^{n} (V_\beta, \mathfrak{g}_\alpha)$ with the bracket
\begin{align}\label{deri-br}
\llbracket P, Q \rrbracket = (-1)^{m-1} [[ \pi, P]_B, Q]_B, \text{ for } P \in C^m(V_\beta, \mathfrak{g}_\alpha), Q \in C^n(V_\beta, \mathfrak{g}_\alpha)
\end{align}
is a graded Lie algebra. (This is called the derived graded Lie algebra).
\end{thm}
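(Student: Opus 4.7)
The strategy is to invoke the derived bracket machinery of Kosmann-Schwarzbach: if $(L,[\cdot,\cdot])$ is a graded Lie algebra, $\pi\in L$ is a Maurer-Cartan element, and $\mathfrak{a}\subseteq L$ is an abelian graded Lie subalgebra closed under the operation $\llbracket a,b\rrbracket :=(-1)^{|a|}[[\pi,a],b]$, then this operation is a graded Lie bracket on $\mathfrak{a}$. I apply this with $L=\bigoplus_{n\ge 0}C^{n+1}((\mathfrak{g}\oplus V)_{\alpha\oplus\beta},(\mathfrak{g}\oplus V)_{\alpha\oplus\beta})$ equipped with Balavoine's bracket, with $\pi$ the element (\ref{defn-pi}), and with $\mathfrak{a}=\bigoplus_{n\ge 0}C^{n+1}(V_\beta,\mathfrak{g}_\alpha)$ embedded in $L$ by extending each $P\colon V^{\otimes m}\to\mathfrak{g}$ to a cochain $\widetilde P$ on $(\mathfrak{g}\oplus V)^{\otimes m}$ that vanishes whenever any argument has a nonzero $\mathfrak{g}$-component and that outputs in $\mathfrak{g}\subseteq\mathfrak{g}\oplus V$.

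Two of the three required inputs for the machinery are already essentially recorded in the paper. The Maurer-Cartan condition $[\pi,\pi]_B=0$ encodes exactly the Hom-Leibniz identity for the hemi-semidirect product of Example \ref{hemi-exam}. The abelian property follows by direct inspection of formula (\ref{bala-bracket}): every term of $\widetilde P\diamond\widetilde Q$ would require feeding the $\mathfrak{g}$-valued output of $\widetilde Q$ into a slot of $\widetilde P$, but $\widetilde P$ annihilates any argument with nonzero $\mathfrak{g}$-component, so $\widetilde P\diamond\widetilde Q=0$ and symmetrically $\widetilde Q\diamond\widetilde P=0$; hence $[\widetilde P,\widetilde Q]_B=0$ in $L$.

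The main and most delicate input is closure: given $P\in C^m(V_\beta,\mathfrak{g}_\alpha)$ and $Q\in C^n(V_\beta,\mathfrak{g}_\alpha)$, I must show that $(-1)^{m-1}[[\pi,\widetilde P]_B,\widetilde Q]_B = \widetilde R$ for some $R\in C^{m+n}(V_\beta,\mathfrak{g}_\alpha)$. Concretely this splits into three checks: the composite vanishes on any argument with a nonzero $\mathfrak{g}$-component, its output lies in $\mathfrak{g}$, and it intertwines $\beta^{\otimes(m+n)}$ with $\alpha$. Using $\pi((x,u),(y,v))=([x,y],\rho(x)v)$, one unpacks $[\pi,\widetilde P]_B = \pi\diamond\widetilde P-(-1)^{m-1}\widetilde P\diamond\pi$ and observes that the surviving contributions are of three types: those that compose $[\cdot,\cdot]$ with a $P$-output and a $\mathfrak{g}$-argument, those that apply $\rho(P(\cdots))$ to a $V$-argument, and those that insert the $V$-component $\rho(x)v$ of $\pi$ into a slot of $P$. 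Bracketing once more with $\widetilde Q$ kills every term in which the $\mathfrak{g}$-valued output of $\widetilde Q$ would have to be fed to a $V$-slot of $\widetilde P$ or to the $\rho$-argument of $\pi$, and what remains takes only $V$-arguments and lands in $\mathfrak{g}$. The Hom-multiplicativity is inherited from that of $P$, $Q$, and $\pi$, combined with the elementary observation that the Balavoine $\diamond$-product of multiplicative cochains is itself multiplicative.

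Once the three inputs are verified, the graded skew-symmetry and graded Jacobi identity for $\llbracket\cdot,\cdot\rrbracket$ are automatic from the Kosmann-Schwarzbach formalism: they follow purely from $[\pi,\pi]_B=0$, the graded Jacobi identity of $[\cdot,\cdot]_B$, and the vanishing of $[\cdot,\cdot]_B$ on $\mathfrak{a}$. The sign $(-1)^{m-1}$ in (\ref{deri-br}) is exactly what is needed to promote the derived bracket from a degree $+1$ operation in the Balavoine grading to a degree $0$ Lie bracket on the shifted space $C^\bullet(V_\beta,\mathfrak{g}_\alpha)$. The main obstacle is the bookkeeping in the closure step — tracking shuffles, the signs $(-1)^{(i-1)(n-1)}$ and $(-1)^\sigma$, and the Hom-twists $\alpha^k$, $\beta^k$ through two nested $\diamond$-compositions — but no structural idea beyond this direct verification is required.
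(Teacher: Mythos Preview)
Your proposal is correct and follows exactly the approach the paper takes: the paper likewise invokes Kosmann-Schwarzbach's derived bracket on the abelian subalgebra $\oplus_{n\ge 0}C^{n+1}(V_\beta,\mathfrak{g}_\alpha)$ of the Balavoine graded Lie algebra, using the Maurer-Cartan element $\pi$ coming from the hemi-semidirect product. Your write-up in fact supplies more detail on the closure step than the paper does (the paper simply records the explicit formula (\ref{exp-deri-br}) as the outcome), though you should phrase the embedding more carefully as ``$\widetilde P$ vanishes on any pure tensor with a $\mathfrak{g}$-factor'' rather than ``on any argument with nonzero $\mathfrak{g}$-component,'' since the latter is not compatible with multilinearity.
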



The bracket (\ref{deri-br}) is explicitly given by the formula
\begin{align}\label{exp-deri-br}
&\llbracket P, Q \rrbracket (v_1, \ldots, v_{m+n}) \\
&=  \sum_{i=1}^m \sum_{\sigma \in \mathbb{S}_{(i-1,n)}} (-1)^{(i-1)n+1} (-1)^\sigma P \big(  \beta^{n} (v_{\sigma (1)}), \ldots, \beta^{n} (v_{\sigma (i-1)}), \nonumber \\ 
& \qquad \qquad  \qquad \qquad \qquad \qquad \qquad \qquad   \rho \big(   Q (v_{\sigma (i)}, \ldots, v_{\sigma (i+n-1)}) (\beta^{n-1} (v_{i+n}))  \big) , \ldots, \beta^{n} (v_{\sigma (m+n)})   \big) \nonumber \\
&- (-1)^{mn}  \sum_{i=1}^n \sum_{\sigma \in \mathbb{S}_{(i-1,m)}} (-1)^{(i-1)m+1} (-1)^\sigma Q \big(  \beta^{m} (v_{\sigma (1)}), \ldots, \beta^{m} (v_{\sigma (i-1)}), \nonumber \\ 
& \qquad \qquad  \qquad \qquad \qquad \qquad    \qquad \qquad \rho \big(   P (v_{\sigma (i)}, \ldots, v_{\sigma (i+m-1)}) (\beta^{m-1}(v_{i+m}))  \big) , \ldots, \beta^{m} (v_{\sigma (m+n)})   \big) \nonumber \\
& + \sum_{\sigma \in \mathbb{S}_{(m,n)}} (-1)^{mn+1} (-1)^\sigma \big[ P \big( 
    \beta^{n-1} (v_{\sigma (1)}), \ldots, \beta^{n-1} (v_{\sigma (m)}) \big), Q \big(      \beta^{m-1} (v_{\sigma (m+1)}), \ldots, \beta^{m-1} (v_{\sigma (m+n)})   \big) \big], \nonumber
\end{align}
for $v_1, \ldots, v_{m+n} \in V$. The following result gives the importance of the above-graded Lie algebra.
\begin{thm}\label{embd-mc}
Let $\mathfrak{g}^{ [ ~, ~ ] }_\alpha$ be a Hom-Lie algebra and $V^\rho_\beta$ be a representation of it.  A linear map $T: V \rightarrow \mathfrak{g}$ that satisfies $\alpha \circ T = T \circ \beta$ is an embedding tensor if and only if  $T \in C^1(V_\beta, \mathfrak{g}_\alpha)$ is a Maurer-Cartan element in the derived graded Lie algebra $(C^\bullet(V_\beta, \mathfrak{g}_\alpha), \llbracket ~, ~ \rrbracket).$
\end{thm}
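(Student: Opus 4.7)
The plan is to unpack the Maurer--Cartan equation $\llbracket T, T \rrbracket = 0$ via the explicit formula (\ref{exp-deri-br}) and show that it reduces directly to the non-trivial half of the embedding-tensor axioms (\ref{embed-iden}); the twist compatibility $\alpha \circ T = T \circ \beta$ comes for free from membership in the cochain space.

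First I would observe that by the very definition of $C^1(V_\beta, \mathfrak{g}_\alpha)$ given in Section \ref{sec2}, the relation $T \in C^1(V_\beta, \mathfrak{g}_\alpha)$ is nothing but $\alpha \circ T = T \circ \beta$. Hence this compatibility is automatically encoded in the ambient graded vector space and need not be checked separately.

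Next I would evaluate $\llbracket T, T \rrbracket \in C^2(V_\beta, \mathfrak{g}_\alpha)$ on a pair $(v_1, v_2) \in V \times V$ by substituting $P = Q = T$ and $m = n = 1$ in (\ref{exp-deri-br}). In this low-dimensional situation the shuffle sets $\mathbb{S}_{(0,1)}$ appearing in the first two sums are singletons and the twist powers $\beta^{n-1}, \beta^{m-1}$ collapse to the identity, so the two mixed sums each reduce to a single term. Tracking the prefactors $(-1)^{(i-1)n+1}$ and $-(-1)^{mn}(-1)^{(i-1)m+1}$ with $m=n=i=1$, both of these sums contribute $-T(\rho(Tv_1)v_2)$. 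The third sum runs over $\mathbb{S}_{(1,1)} = \{\mathrm{id},(1\,2)\}$ with overall sign $(-1)^{mn+1}(-1)^\sigma = (-1)^\sigma$; the identity shuffle gives $[Tv_1, Tv_2]$, while the transposition gives $-[Tv_2, Tv_1]$, which by the skew-symmetry of the Hom-Lie bracket $[~,~]$ equals $[Tv_1, Tv_2]$. Adding the three contributions gives
\begin{align*}
\llbracket T, T \rrbracket(v_1, v_2) \;=\; 2\bigl([Tv_1, Tv_2] - T(\rho(Tv_1)\, v_2)\bigr).
\end{align*}
Consequently $\llbracket T, T \rrbracket = 0$ if and only if the second identity of (\ref{embed-iden}) holds for all $v_1, v_2 \in V$, which establishes both directions of the theorem.

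The main obstacle is keeping the signs straight in (\ref{exp-deri-br}) when $P = Q = T$: one must confirm that the two mixed sums reinforce (both produce $-T(\rho(Tv_1)v_2)$) rather than cancel, and that the two shuffles in the third sum double the bracket term via skew-symmetry rather than annihilate. Alternatively, one can bypass (\ref{exp-deri-br}) by evaluating $\llbracket T, T \rrbracket = [[\pi, T]_B, T]_B$ directly from the Balavoine formula (\ref{bala-bracket}), using the explicit expression (\ref{defn-pi}) for $\pi$; this yields the same identity once one notes that the $\mathfrak{g}$-component of $[\pi, T]_B$ encodes the bracket of two $T$-images and the $V$-component encodes the $\rho(T\cdot)$-action.
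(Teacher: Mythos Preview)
Your proposal is correct and follows exactly the same approach as the paper: both evaluate $\llbracket T,T\rrbracket$ via the explicit formula (\ref{exp-deri-br}) at $m=n=1$ to obtain $\llbracket T,T\rrbracket(u,v)=2\big([Tu,Tv]-T(\rho(Tu)v)\big)$ and conclude. Your write-up simply spells out the sign bookkeeping and the shuffle cases that the paper leaves implicit.
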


\begin{proof}
For any $T \in C^1(V_\beta, \mathfrak{g}_\alpha)$, we have from (\ref{exp-deri-br}) that
\begin{align*}
\llbracket T, T \rrbracket (u, v) = 2 \big( [T(u), T(v)] - T (\rho (Tu) v)   \big), \text{ for } u, v \in V.
\end{align*}
This shows that $T$ is an embedding tensor if and only if $\llbracket T, T \rrbracket = 0$. This completes the proof.
\end{proof}

\medskip

Let $\mathfrak{g}^{[ ~, ~ ]}_\alpha$ be a Hom-Lie algebra, $V^\rho_\beta$ be a representation and $T: V \rightarrow \mathfrak{g}$ be an embedding tensor. Then it follows from the previous theorem that $\llbracket T, T \rrbracket  = 0$. Hence $T$ induces a degree $1$ differential
\begin{align*}
d_T : C^n (V_\beta, \mathfrak{g}_\alpha) \rightarrow C^{n + 1} (V_\beta, \mathfrak{g}_\alpha), ~ d_T = \llbracket T, - \rrbracket, \text{ for } n \geq 1.
\end{align*}
We define $C^0 (V_\beta, \mathfrak{g}_\alpha) = \{ x \in \mathfrak{g} ~|~ \alpha (x) = x \}$ and a map (also denoted by the same notation as above) $d_T : C^0 (V_\beta, \mathfrak{g}_\alpha) \rightarrow C^1 (V_\beta, \mathfrak{g}_\alpha)$ by
\begin{align*}
    d_T (x) (v) = [x, Tv] - T ( \rho (x) v), \text{ for } x \in C^0 (V_\beta, \mathfrak{g}_\alpha), v \in V.
\end{align*}
Then one can easily check that the composition $C^0 (V_\beta, \mathfrak{g}_\alpha) \xrightarrow{d_T} C^1 (V_\beta, \mathfrak{g}_\alpha) \xrightarrow{d_T} C^2 (V_\beta, \mathfrak{g}_\alpha)$ is the zero map.
In other words, one obtains a cochain complex $\{ C^\bullet (V_\beta, \mathfrak{g}_\alpha), d_T \}$. The corresponding cohomology is denoted by $H^\bullet_T (V_\beta, \mathfrak{g}_\alpha)$ and it is called the {\bf cohomology} of the embedding tensor $T$.

\medskip

Let $T: V \rightarrow \mathfrak{g}$ be an embedding tensor on the Hom-Lie algebra $\mathfrak{g}^{ [ ~, ~ ] }_\alpha$ with respect to the representation $V^\rho_\beta$. Then we have seen in Proposition \ref{induced-hl} that $V^{ \{ ~, \} }_\beta$ is a Hom-Leibniz algebra, where $\{ u, v \} =  \rho (Tu)v$, for $u, v \in V$. We define two bilinear maps $\overline{\rho}^L : V \times \mathfrak{g} \rightarrow \mathfrak{g}$ and $\overline{\rho}^R : \mathfrak{g} \times V \rightarrow \mathfrak{g}$ by
\begin{align*}
\overline{\rho}^L (v, x) = [Tv, x] ~~~~ \text{ ~ and ~ } ~~~~ \overline{\rho}^R (x, v) = [x, Tv] - T (\rho (x) v), \text{ for } v \in V, x \in \mathfrak{g}.
\end{align*}
With the above notations, we have the following.

\begin{prop}
The maps $\overline{\rho}^L, \overline{\rho}^R$ define a representation of the Hom-Leibniz algebra $V^{ \{ ~, ~\} }_\beta$ on the vector space $\mathfrak{g}$ with the linear map $\alpha : \mathfrak{g} \rightarrow \mathfrak{g}$. In other words, $\mathfrak{g}^{ \overline{\rho}^L, \overline{\rho}^R}_\alpha$ is a representation of the induced Hom-Leibniz algebra $V^{ \{ ~, ~ \}}_\beta$.
\end{prop}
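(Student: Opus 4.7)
The plan is to verify the four defining axioms of a Hom-Leibniz representation, namely compatibility of $\overline{\rho}^L$ and $\overline{\rho}^R$ with the twists, the left-left identity, the left-right (mixed) identity, and the right-right identity. The conceptual engine is the hemi-semidirect product $(\mathfrak{g} \oplus V)^{\{~,~\}}_{\alpha \oplus \beta}$: the graph $Gr(T)$ is a Hom-Leibniz subalgebra isomorphic to $V$ under $u \mapsto (Tu, u)$, and the projection $\mathrm{proj} \colon \mathfrak{g} \oplus V \to \mathfrak{g}$, $(a, u) \mapsto a - Tu$, has kernel exactly $Gr(T)$. A direct check using $[Tu, Tv] = T\rho(Tu)v$ gives
\[
\mathrm{proj}\bigl(\{(Tu, u), (x, 0)\}\bigr) = \overline{\rho}^L(u, x), \qquad \mathrm{proj}\bigl(\{(x, 0), (Tu, u)\}\bigr) = \overline{\rho}^R(x, u),
\]
so each module axiom may be read off from the Hom-Leibniz identity in $\mathfrak{g} \oplus V$ applied to a triple in which two entries are graph elements and the third is $(x, 0)$, followed by $\mathrm{proj}$. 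One can alternatively expand each axiom in terms of $T$, $[~,~]$, and $\rho$ and verify it directly.

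The twist-compatibility axioms $\alpha \circ \overline{\rho}^L(u, x) = \overline{\rho}^L(\beta u, \alpha x)$ and $\alpha \circ \overline{\rho}^R(x, u) = \overline{\rho}^R(\alpha x, \beta u)$ are immediate from $\alpha \circ T = T \circ \beta$ together with multiplicativity of $[~,~]_{\mathfrak{g}}$ and $\beta \circ \rho(x) = \rho(\alpha x) \circ \beta$. The left-left axiom
\[
\overline{\rho}^L(\beta u, \overline{\rho}^L(v, x)) = \overline{\rho}^L(\{u,v\}, \alpha x) + \overline{\rho}^L(\beta v, \overline{\rho}^L(u, x))
\]
unfolds to $[\alpha(Tu), [Tv, x]] = [[Tu, Tv], \alpha x] + [\alpha(Tv), [Tu, x]]$ once $T\{u, v\} = [Tu, Tv]$ is substituted, which is precisely the Hom-Jacobi identity on the triple $(Tu, Tv, x)$.

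The main obstacle is the right-right axiom (and, analogously, the mixed left-right axiom). Expanding $\overline{\rho}^R(\overline{\rho}^R(x, u), \beta v)$ yields the bracket $[[x, Tu], \alpha Tv]$, together with a pair of terms $-[T\rho(x)u, \alpha Tv] + T\rho(T\rho(x)u)\beta v$ that vanishes by the embedding tensor equation, and a residual term $-T\rho([x, Tu])\beta v$ that must be expanded via the representation axiom $\rho([y,z]) \circ \beta = \rho(\alpha y) \rho(z) - \rho(\alpha z) \rho(y)$. Similarly, $\overline{\rho}^L(\beta u, \overline{\rho}^R(x, v))$ produces $[\alpha Tu, [x, Tv]]$ together with $-[\alpha Tu, T\rho(x)v]$, which is converted to $-T\rho(\alpha Tu)\rho(x)v$ by the embedding tensor equation. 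All $T \circ \rho$-terms then cancel against the expansion of $\overline{\rho}^R(\alpha x, \{u,v\}) = [\alpha x, [Tu, Tv]] - T\rho(\alpha x)\rho(Tu)v$, and the surviving pure-bracket terms in $\mathfrak{g}$ collapse via the Hom-Jacobi identity on $(x, Tu, Tv)$. The left-right axiom can either be verified by a parallel computation, or derived from the right-right axiom after noting that the sum of the two mixed axioms reduces to the embedding tensor equation for the pair $(\rho(x)u, \beta v)$.
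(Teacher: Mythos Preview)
Your proof is correct. The paper itself omits the argument entirely, stating only that it ``follows from a straightforward calculation''; your direct verification is presumably what the authors had in mind, and the computations you outline all go through exactly as described: the cancellation $[T\rho(x)u,\alpha Tv]=T\rho(T\rho(x)u)\beta v$ via the embedding tensor identity, the expansion of $\rho([x,Tu])\circ\beta$ via the representation axiom, and the final collapse by Hom--Jacobi on $(x,Tu,Tv)$ are all accurate. The shortcut deriving the left--right axiom from the right--right one is also valid, since $\overline{\rho}^L(u,x)+\overline{\rho}^R(x,u)=-T\rho(x)u$ by skew-symmetry of $[~,~]$, and $\overline{\rho}^R(Tw,\beta v)=[Tw,T\beta v]-T\rho(Tw)\beta v=0$ is precisely the embedding tensor identity.

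Your conceptual framing via the hemi-semidirect product and the projection $\mathrm{proj}(a,w)=a-Tw$ along $Gr(T)$ is an addition not present in the paper and gives a clean explanation of \emph{why} the calculation succeeds. One small remark: the claim that the module axioms ``may be read off'' from the Hom-Leibniz identity after applying $\mathrm{proj}$ uses implicitly that $Gr(T)$ is a subalgebra, so that the $Gr(T)$-components of the intermediate brackets, when further bracketed with graph elements, remain in $\ker(\mathrm{proj})$ and drop out. You could make this step explicit, but since you also carry out the direct computation, the proof is complete as written.
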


The proof of the above proposition follows from a straightforward calculation. Hence we omit the details. Now, it follows from the proposition that one may consider the Loday-Pirashvili cohomology of the induced Hom-Leibniz algebra $V^{ \{ ~, ~ \}}_\beta$ with coefficients in the representation $\mathfrak{g}^{\overline{\rho}^L, \overline{\rho}^R}_\alpha$. More precisely, we consider the cochain complex $\{ C^\bullet (V_\beta, \mathfrak{g}_\alpha), \delta_\mathrm{HLeib} \}$, where
\begin{align*}
C^0(V_\beta, \mathfrak{g}_\alpha) = \{ x \in \mathfrak{g} ~|~ \alpha (x) = x \}, \quad C^{n \geq 1} (V_\beta, \mathfrak{g}_\alpha) = \mathrm{Hom}(V_\beta^{\otimes n}, \mathfrak{g}_\alpha).
\end{align*}
The coboundary map $\delta_\mathrm{HLeib} : C^n(V_\beta, \mathfrak{g}_\alpha) \rightarrow C^{n+1}(V_\beta, \mathfrak{g}_\alpha)$ is given by
\begin{align*}
(\delta_\mathrm{HLeib} f) (v_1, \ldots, v_{n+1}) =~& \sum_{i=1}^n (-1)^{i+1} ~ [ T ( \beta^{n-1} (v_i) ), f (v_1, \ldots, \widehat{v_i}, \ldots, v_{n+1})  ]  \\
+& \sum_{1 \leq i < j \leq n+1} (-1)^i f \big(  \beta (v_1), \ldots, \widehat{v_i}, \ldots, \beta (v_{j-1}), \rho ( T v_i) v_j , \beta (v_{j+1}), \ldots, \beta (v_{n+1})   \big) \nonumber  \\
+& (-1)^{n-1} ~\big( [ f (v_1, \ldots, v_n), T (\beta^{n-1} (v_{n+1}))] - T \big(  \rho (  f (v_1, \ldots, v_n)  ) (\beta^{n-1} (v_{n+1}))  \big)   \big), \nonumber
\end{align*}
for $f \in C^n ( V_\beta, \mathfrak{g}_\alpha)$ and $v_1, \ldots, v_{n+1} \in V$. We denote by $H^\bullet_\mathrm{HLeib} (V_\beta^{ \{ ~, ~ \} }, \mathfrak{g}_\alpha^{ \overline{\rho}^L, \overline{\rho}^R } )$ the corresponding cohomology groups.

\begin{prop}
Let $T: V \rightarrow \mathfrak{g}$ be an embedding tensor on the Hom-Lie algebra $\mathfrak{g}^{ [~,~]}_\alpha$ with respect to the representation $V^\rho_\beta$. Then the coboundary maps $d_T$ and $\delta_\mathrm{HLeib}$ are related by
\begin{align*}
d_T (f) = (-1)^{n-1} ~ \delta_\mathrm{HLeib} (f), \text{ for } f \in C^n (V_\beta, \mathfrak{g}_\alpha).
\end{align*}
In other words, we have $H^\bullet_T (V_\beta, \mathfrak{g}_\alpha) 
  \cong  H^\bullet_\mathrm{HLeib} ({V}_\beta^{ \{ ~, ~ \}},  \mathfrak{g}_\alpha^{ \overline{\rho}^L, \overline{\rho}^R } ).$
\end{prop}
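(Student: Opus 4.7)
The plan is to expand $d_T f = \llbracket T, f\rrbracket$ using the explicit formula (\ref{exp-deri-br}) with $P = T$ of arity $m = 1$ and $Q = f$ of arity $n$, and then compare term by term with the Loday-Pirashvili differential $\delta_\mathrm{HLeib}(f)$ written out in the excerpt, verifying the global factor $(-1)^{n-1}$.

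First I would specialize (\ref{exp-deri-br}) to $m = 1$. The outer index $i$ in the first sum collapses to $i = 1$ with trivial shuffle, yielding a single term $-\,T\bigl(\rho(f(v_1,\ldots,v_n))(\beta^{n-1}(v_{n+1}))\bigr)$. The third sum is indexed by $(1,n)$-shuffles, each determined by $k := \sigma(1) \in \{1,\ldots,n+1\}$ with sign $(-1)^{k-1}$; separating the contribution $k = n+1$ and applying antisymmetry of $[\,,\,]$ isolates the term $\bigl[f(v_1,\ldots,v_n),\, T(\beta^{n-1}(v_{n+1}))\bigr]$. Combined with the first-sum term this is precisely $(-1)^{n-1}$ times the third summand of $\delta_\mathrm{HLeib}(f)$, while the remaining contributions $k = 1,\ldots,n$ of the third sum reproduce, up to the same global sign, the $\rho^L$-type summand $\sum_{i=1}^n (-1)^{i+1}[T(\beta^{n-1}(v_i)), f(v_1,\ldots,\widehat{v_i},\ldots,v_{n+1})]$.

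Next I would handle the middle sum of (\ref{exp-deri-br}). With $m = 1$, a shuffle $\sigma \in \mathbb{S}_{(i-1,1)}$ is determined by the value $k := \sigma(i) \in \{1,\ldots,i\}$ (the argument fed to $T$), with sign $(-1)^{i-k}$. Reindexing via $(k,\, i+1) \leftrightarrow (i,\, j)$ with $1 \le i < j \le n+1$ matches the position of $\rho(Tv_i)v_j$ inside $f$'s argument list on both sides. A short sign calculation, combining the shuffle sign $(-1)^{i-k}$, the Koszul sign $(-1)^{(i-1)m+1} = (-1)^i$, and the overall prefactor $-(-1)^{mn} = (-1)^{n+1}$, collapses to $(-1)^{n-1}(-1)^{i}$, matching the middle summand of $\delta_\mathrm{HLeib}(f)$.

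The main obstacle will be the sign bookkeeping, as there are several nested sources of signs (shuffle signs, Koszul signs from the derived bracket formula, graded antisymmetry of Balavoine's bracket, and the global factor $(-1)^{n-1}$). Combinatorially, the reindexing in step two and the antisymmetrization trick in step one are routine once the shuffles are parametrized by a single integer; the proof is essentially an exercise in rewriting (\ref{exp-deri-br}) at $m = 1$. Finally, the cohomology isomorphism $H^\bullet_T(V_\beta, \mathfrak{g}_\alpha) \cong H^\bullet_\mathrm{HLeib}(V_\beta^{\{~,~\}}, \mathfrak{g}_\alpha^{\bar\rho^L,\bar\rho^R})$ is an immediate consequence of the differential-level identity, since multiplication by $(-1)^{n-1}$ on $C^n$ is an invertible chain endomorphism and therefore preserves kernels and images in every degree.
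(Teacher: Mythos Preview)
Your proposal is correct and follows essentially the same approach as the paper: both expand $\llbracket T,f\rrbracket$ via the explicit derived-bracket formula (\ref{exp-deri-br}) specialized to $m=1$, reindex the $(1,n)$- and $(i-1,1)$-shuffle sums, separate the contribution $\sigma(1)=n+1$ from the Lie-bracket term using antisymmetry, and then match the result against the three summands of $\delta_{\mathrm{HLeib}}(f)$ up to the global sign $(-1)^{n-1}$.
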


\begin{proof}
For any $f \in C^n (V_\beta, \mathfrak{g}_\alpha)$ and $v_1, \ldots, v_{n+1} \in V$, it follows from (\ref{exp-deri-br}) that
\begin{align*}
   & (d_T (f)) (v_1, \ldots, v_{n+1}) \\
   & = \llbracket T, f \rrbracket (v_1, \ldots, v_{n+1}) \\
   & = - T \big(   \rho (   f(v_1, \ldots, v_n)) (\beta^{n-1}(v_{n+1})) \big) \\
   & \quad - (-1)^n \sum_{i=1}^n \sum_{\sigma \in \mathbb{S}_{(i-1,1)}} (-1)^i (-1)^\sigma  f \big( \beta (v_{\sigma (1)}), \ldots, \beta (v_{\sigma (i-1)}) , \rho (  T v_{\sigma (i)}) v_{i+1}    , \ldots, \beta (v_{n+1}) \big) \\
   & \quad  + \sum_{\sigma \in \mathbb{S}_{(1,n)}} (-1)^{n+1} (-1)^\sigma [T (\beta^{n-1} (v_{\sigma (1)})), f (  
 v_{\sigma (2)}, \ldots, v_{\sigma (n+1)})]  \\
 &= - T \big(   \rho (   f(v_1, \ldots, v_n)) (\beta^{n-1}(v_{n+1})) \big) \\ 
 & \quad - (-1)^n  \sum_{1 \leq i < j \leq n+1} (-1)^i f \big(  \beta (v_1), \ldots, \widehat{v_i}, \ldots, \beta (v_{j-1}), \rho ( T v_i) v_j , \beta (v_{j+1}), \ldots, \beta (v_{n+1})   \big) \\
 & \quad + \sum_{i=1}^n (-1)^{n+1} (-1)^{i-1} [ T (\beta^{n-1} (v_i)), f (v_1, \ldots, \widehat{v_i}, \ldots, v_{n+1})] -  [ T (\beta^{n-1} (v_{n+1})), f (v_1, \ldots, v_{n})] \\
 &= (-1)^{n-1} (\delta_\mathrm{HLeib} (f)) (v_1, \ldots, v_{n+1}).
\end{align*}
This completes the proof.
\end{proof}


It follows from the above proposition that the cohomology of the embedding tensor $T$ can be realized as the Loday-Pirashvili cohomology of the induced Hom-Leibniz algebra $V^{\{ ~, ~\} }_\beta$ with coefficients in the representation $\mathfrak{g}^{ \overline{\rho}^L, \overline{\rho}^R}_\alpha$.

\medskip

In the following, we show that there is a homomorphism from the cohomology of an embedding tensor to the cohomology of the induced Hom-Leibniz algebra with coefficients in the adjoint representation. Let $\mathfrak{g}^{[ ~, ~ ]}_\alpha$ be a Hom-Lie algebra and $V^\rho_\beta$ be a representation of it. We consider the following two graded Lie algebras, namely, 

(1) the derived graded Lie algebra $(C^\bullet (V_\beta, \mathfrak{g}_\alpha), \llbracket ~, ~ \rrbracket)$, whose Maurer-Cartan elements are embedding tensors,

(2) the graded Lie algebra $(C^{\bullet +1} (V_\beta, V_\beta), [~, ~]_B)$ with the Balavoine's bracket, whose Maurer-Cartan elements are precisely Hom-Leibniz algebra brackets on $V_\beta$.

For any $n \geq 1$, we define a map $\Phi_n : C^n(V_\beta, \mathfrak{g}_\alpha) \rightarrow C^{n+1} (V_\beta,V_\beta)$ by
\begin{align*}
\Phi_n (f) (v_1, \ldots, v_{n+1}) = - \rho \big(  f(v_1, \ldots, v_n)  \big) ( \beta^{n-1}(v_{n+1})), \text{ for } f \in C^n(V_\beta, \mathfrak{g}_\alpha) \text{ and } u_1, \ldots, v_{n+1} \in V.
\end{align*}
With the above notations, we have the following.

\begin{prop}
The collection of maps $\{ \Phi_n \}_{n \geq 1}$ defines a homomorphism of graded Lie algebras from $( C^\bullet (V_\beta, \mathfrak{g}_\alpha), \llbracket ~, ~ \rrbracket )$ to $( C^{\bullet + 1} (V_\beta, V_\beta), [~,~]_B )$.
\end{prop}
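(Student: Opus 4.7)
The plan is to check the two defining properties of a graded Lie algebra homomorphism. Degree preservation is immediate once one fixes conventions: $C^n(V_\beta, \mathfrak{g}_\alpha)$ carries degree $n$ in the derived graded Lie algebra while $C^{n+1}(V_\beta, V_\beta)$ carries degree $n$ in the shifted Balavoine algebra, so each $\Phi_n$ is a degree-zero map. A brief auxiliary check confirms $\Phi_n(f) \in C^{n+1}(V_\beta, V_\beta)$, i.e., $\beta \circ \Phi_n(f) = \Phi_n(f) \circ \beta^{\otimes(n+1)}$, using $\alpha \circ f = f \circ \beta^{\otimes n}$ together with $\beta \circ \rho(x) = \rho(\alpha(x)) \circ \beta$.

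The core step is the bracket compatibility
\begin{align*}
\Phi_{m+n}\big(\llbracket P, Q \rrbracket\big) = \big[\Phi_m(P), \Phi_n(Q)\big]_B
\end{align*}
for $P \in C^m(V_\beta, \mathfrak{g}_\alpha)$ and $Q \in C^n(V_\beta, \mathfrak{g}_\alpha)$. I would evaluate both sides on a generic tuple $(v_1, \ldots, v_{m+n+1})$ and match term by term. On the left, formula (\ref{exp-deri-br}) yields $\llbracket P, Q \rrbracket(v_1, \ldots, v_{m+n})$ as a sum of three families --- two substitution families indexed by insertion positions in $P$ and in $Q$, and one commutator family $[P(\beta^{n-1} v_{\sigma(1)},\ldots), Q(\beta^{m-1} v_{\sigma(m+1)},\ldots)]$ indexed by $\mathbb{S}_{m,n}$-shuffles --- and wrapping the whole in $-\rho(\cdot)\beta^{m+n-1}(v_{m+n+1})$ per the definition of $\Phi_{m+n}$ produces the LHS. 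On the right, the Balavoine bracket decomposes as $\Phi_m(P) \diamond \Phi_n(Q) - (-1)^{mn}\Phi_n(Q) \diamond \Phi_m(P)$, and each $\diamond$ is a sum over insertion positions $i$ of the inner map into the outer one.

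The comparison splits naturally by whether an insertion is \emph{interior} ($i \leq m$ or $i \leq n$) or \emph{terminal} ($i = m+1$ or $i = n+1$). Interior insertions feed the inner $\Phi$-image --- itself of the form $-\rho(\cdots)\beta^{n-1}(\cdot)$ --- directly into an input slot of the outer $P$ or $Q$; after the outer $-\rho(\cdot)\beta^{m+n-1}(v_{m+n+1})$ is applied, the result lines up with the first two substitution families of (\ref{exp-deri-br}). Crucially, the shuffle sets $\mathbb{S}_{i-1, n}$ and $\mathbb{S}_{i-1, m}$ appearing in both formulas agree on the nose, and the internal minus signs from the two nested $\Phi$'s combine with the Balavoine coefficients $(-1)^{(i-1)n}$ to produce exactly the LHS factors $(-1)^{(i-1)n+1}$ (and analogously for the other $\diamond$).

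Terminal insertions feed the inner map's $V$-output into the $\rho$-argument of the outer $\Phi$, yielding iterated $\rho$-terms. Using $\beta^{m-1} \circ \rho(y) = \rho(\alpha^{m-1}(y)) \circ \beta^{m-1}$ to push twists inward and $\alpha^k \circ P = P \circ \beta^{\otimes m}$ (and similarly for $Q$) to absorb $\alpha$-powers, the $i = m+1$ terminal of $\Phi_m(P) \diamond \Phi_n(Q)$ reduces to the sum over $\mathbb{S}_{m,n}$-shuffles of $(-1)^{mn}(-1)^\sigma \rho(P(\beta^n v_{\sigma(\cdot)}))\rho(Q(\beta^{m-1} v_{\sigma(\cdot)}))\beta^{m+n-2}(v_{m+n+1})$, with a symmetric expression from the other terminal. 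These are matched with the commutator family of (\ref{exp-deri-br}) by invoking the representation axiom $\rho([x, y]) \circ \beta = \rho(\alpha(x))\rho(y) - \rho(\alpha(y))\rho(x)$ at $x = P(\beta^{n-1}v_{\sigma(1)},\ldots)$, $y = Q(\beta^{m-1}v_{\sigma(m+1)},\ldots)$ and $\beta^{m+n-1}(v_{m+n+1}) = \beta(\beta^{m+n-2}(v_{m+n+1}))$, which splits the commutator term into precisely the two terminal shapes above. The hard part will be the sign and shuffle bookkeeping --- tracking the interplay of $(-1)^{mn}$, $(-1)^{mn+1}$, $(-1)^{(i-1)n+1}$, the signs contributed by two layers of $\Phi$, and the translation between the $\mathbb{S}_{n,m}$-shuffles arising on the second $\diamond$ and the $\mathbb{S}_{m,n}$-shuffles on the LHS (which differ by a factor $(-1)^{mn}$) --- but once this combinatorial matching is set up, each term on one side has a unique partner on the other, and the identity is forced.
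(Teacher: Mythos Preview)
Your proposal is correct and follows essentially the same route as the paper: both expand $\Phi_m(P)\diamond\Phi_n(Q)$ and $\Phi_n(Q)\diamond\Phi_m(P)$, split each into interior insertions ($i\le m$, resp.\ $i\le n$) and a terminal insertion, convert the $\mathbb{S}_{(n,m)}$-shuffles in the second $\diamond$ to $\mathbb{S}_{(m,n)}$-shuffles via the obvious bijection, and then match with the three families in the explicit formula for $\llbracket P,Q\rrbracket$. You are a bit more explicit than the paper in two places --- you add the preliminary check that $\Phi_n(f)$ actually lands in $C^{n+1}(V_\beta,V_\beta)$, and you name the representation identity $\rho([x,y])\circ\beta=\rho(\alpha(x))\rho(y)-\rho(\alpha(y))\rho(x)$ that collapses the two terminal terms into the commutator family (the paper performs this step silently in its final display) --- but the architecture of the argument is the same.
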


\begin{proof}
Let $P \in C^m (V_\beta, \mathfrak{g}_\alpha)$ and $Q \in C^n (V_\beta, \mathfrak{g}_\alpha)$ be two arbitrary homogeneous elements of $C^\bullet (V_\beta, \mathfrak{g}_\alpha)$. Then for any $v_1, \ldots, v_{m+n+1} \in V$, we have
\begin{align*}
    &\Phi_m (P) \diamond \Phi_n (Q) \\
    &= \sum_{i=1}^{m+1} \sum_{\sigma \in \mathbb{S}_{(i-1,n)}} (-1)^{(i-1)n} (-1)^{ \sigma} ~ \Phi_m (P) 
    \bigg(  \beta^n v_{\sigma (1)}, \ldots, \beta^n v_{\sigma (i-1)}, \\ 
    & \qquad \qquad \qquad  \qquad \qquad \qquad  \qquad \qquad \qquad \qquad \Phi_n (Q) \big( v_{\sigma (i)}, \ldots, v_{\sigma (i+n-1)}, v_{i+n} \big), \ldots, \beta^n v_{m+n+1}  \bigg) \\
    &= \sum_{i=1}^m \sum_{\sigma \in \mathbb{S}_{(i-1,n)}} (-1)^{(i-1)n} (-1)^\sigma ~\rho \bigg(   P \big(    \beta^n v_{\sigma (1)}, \ldots, \beta^n v_{\sigma (i-1)}, \\
   & \qquad \qquad \qquad \qquad \qquad  \rho \big(       Q ( v_{\sigma (i)}, \ldots, v_{\sigma (i+n-1)}) (\beta^{n-1} v_{i+n})    \big), \ldots, \beta^n v_{m+n}  \big)   \bigg)  (\beta^{m+n-1} v_{m+n+1}) \\
   & + \sum_{\sigma \in \mathbb{S}_{(m,n)}} (-1)^{mn + \sigma} \rho \big(    P (  \beta^n v_{\sigma(1)}, \ldots, \beta^n v_{\sigma(m)}) \big)   \rho \big( Q (  \beta^{m-1} v_{\sigma (m+1)}, \ldots, \beta^{m-1} v_{\sigma(m+n)}  )   \big) (\beta^{m+n-2} v_{m+n+1}).
\end{align*}
Similarly, 
\begin{align*}
    &\Phi_n (Q) \diamond \Phi_m (P) \\
    &= \sum_{i=1}^{n+1} \sum_{\sigma \in \mathbb{S}_{(i-1,m)}} (-1)^{(i-1)m} (-1)^{ \sigma} ~ \Phi_n (Q) 
    \bigg(  \beta^m v_{\sigma (1)}, \ldots, \beta^m v_{\sigma (i-1)}, \\ 
    & \qquad \qquad \qquad  \qquad \qquad \qquad  \qquad \qquad \qquad \qquad \Phi_m (P) \big( v_{\sigma (i)}, \ldots, v_{\sigma (i+m-1)}, v_{i+m} \big), \ldots, \beta^m v_{m+n+1}  \bigg) \\
    &= \sum_{i=1}^n \sum_{\sigma \in \mathbb{S}_{(i-1,m)}} (-1)^{(i-1)m} (-1)^\sigma ~\rho \bigg(   Q \big(    \beta^m v_{\sigma (1)}, \ldots, \beta^m v_{\sigma (i-1)}, \\
   & \qquad \qquad \qquad \qquad \qquad  \rho \big(       P ( v_{\sigma (i)}, \ldots, v_{\sigma (i+m-1)}) (\beta^{m-1} v_{i+m})    \big), \ldots, \beta^m v_{m+n}  \big)   \bigg)  (\beta^{m+n-1} v_{m+n+1}) \\
   & ~~ + \sum_{\sigma \in \mathbb{S}_{(n,m)}} (-1)^{mn + \sigma} \rho \big(    Q (  \beta^m v_{\sigma(1)}, \ldots, \beta^m v_{\sigma(n)}) \big)   \rho \big( P (  \beta^{n-1} v_{\sigma (n+1)}, \ldots, \beta^{n-1} v_{\sigma(m+n)}  )   \big) (\beta^{m+n-2} v_{m+n+1})\\
   &= \sum_{i=1}^n \sum_{\sigma \in \mathbb{S}_{(i-1,m)}} (-1)^{(i-1)m} (-1)^\sigma ~\rho \bigg(   Q \big(    \beta^m v_{\sigma (1)}, \ldots, \beta^m v_{\sigma (i-1)}, \\
   & \qquad \qquad \qquad \qquad \qquad  \rho \big(       P ( v_{\sigma (i)}, \ldots, v_{\sigma (i+m-1)}) (\beta^{m-1} v_{i+m})    \big), \ldots, \beta^m v_{m+n}  \big)   \bigg)  (\beta^{m+n-1} v_{m+n+1}) \\
   & ~~ + \sum_{\tau \in \mathbb{S}_{(m,n)}} (-1)^{ \tau} \rho \big(    Q (  \beta^m v_{\tau(m+1)}, \ldots, \beta^m v_{\tau(m+n)}) \big)   \rho \big( P (  \beta^{n-1} v_{\tau (1)}, \ldots, \beta^{n-1} v_{\tau(m)}  )   \big) (\beta^{m+n-2} v_{m+n+1}).
\end{align*}
Here, in the last term, we observed that a shuffle $\sigma \in \mathbb{S}_{(n,m)}$ corresponds to shuffle $\tau \in \mathbb{S}_{(m,n)}$ given by 
\begin{align*}
    \tau (i) = \sigma (n+i) \text{ for } 1 \leq i \leq m \quad \text{ and } \quad \tau (i) = \sigma (i-m) \text{ for } m+1 \leq i \leq m+n.
\end{align*}
This correspondence in fact gives rise to a bijection between the sets $\mathbb{S}_{(n,m)}$ and $\mathbb{S}_{(m,n)}$. Hence, we have
\begin{align*}
  &  [\Phi_m (P), \Phi_n (Q)]_B (v_1, \ldots, v_{m+n+1}) \\
 &= \big( \Phi_m (P) \diamond \Phi_n (Q) - (-1)^{mn} \Phi_n (Q) \diamond \Phi_m (P) \big) (v_1, \ldots, v_{m+n+1}) \\
 &= \sum_{i=1}^m \sum_{\sigma \in \mathbb{S}_{(i-1,n)}} (-1)^{(i-1)n} (-1)^\sigma ~\rho \bigg(   P \big(    \beta^n v_{\sigma (1)}, \ldots, \beta^n v_{\sigma (i-1)}, \\
   & \qquad \qquad \qquad \qquad \qquad  \rho \big(       Q ( v_{\sigma (i)}, \ldots, v_{\sigma (i+n-1)}) (\beta^{n-1} v_{i+n})    \big), \ldots, \beta^n v_{m+n}  \big)   \bigg)  (\beta^{m+n-1} v_{m+n+1}) \\
   & - (-1)^{mn} \sum_{i=1}^n \sum_{\sigma \in \mathbb{S}_{(i-1,m)}} (-1)^{(i-1)m} (-1)^\sigma ~\rho \bigg(   Q \big(    \beta^m v_{\sigma (1)}, \ldots, \beta^m v_{\sigma (i-1)}, \\
   & \qquad \qquad \qquad \qquad \qquad  \rho \big(       P ( v_{\sigma (i)}, \ldots, v_{\sigma (i+m-1)}) (\beta^{m-1} v_{i+m})    \big), \ldots, \beta^m v_{m+n}  \big)   \bigg)  (\beta^{m+n-1} v_{m+n+1}) \\
   &+ \sum_{\sigma \in \mathbb{S}_{(m,n)}} (-1)^{mn +\sigma} \rho \big(  [ P ( \beta^{n-1} v_{\sigma (1)}, \ldots, \beta^{n-1} v_{\sigma (m)}), Q (\beta^{m-1} v_{\sigma (m+1)}, \ldots, \beta^{m-1} v_{\sigma (m+n)})       ]     \big) (\beta^{m+n-1} v_{m+n+1}) \\
 &= \big(  \Phi_{m+n} \llbracket P, Q \rrbracket  \big) (v_1, \ldots, v_{m+n+1}).
\end{align*}
This shows that the collection $\{ \Phi_n \}_{n \geq 1}$ defines a homomorphism of graded Lie algebras.
\end{proof}

As a consequence of the above proposition, we get the following result.

\begin{thm}
Let $\mathfrak{g}^{[~,~]}_\alpha$ be a Hom-Lie algebra and $V^\rho_\beta$ be a representation of it. Let $T: V \rightarrow \mathfrak{g}$ be an embedding tensor (on $\mathfrak{g}^{[~,~]}_\alpha$ with respect to the representation $V^\rho_\beta$). Then the collection $\{ \Phi_n \}_{n \geq 1}$ induces a homomorphism
\begin{align*}
\Phi_\bullet : H^\bullet_T (V_\beta, \mathfrak{g}_\alpha) \rightarrow H^{\bullet +1}_\mathrm{HLeib} (V_\beta^{ \{ ~, ~\} }, V_\beta^{ \{ ~, ~\}, \{ ~, ~ \} })
\end{align*}
from the cohomology of the embedding tensor $T$ to the Loday-Pirashvili cohomology of the induced Leibniz algebra $V_\beta^{ \{ ~, ~\} }$ with coefficients in the adjoint representation.
\end{thm}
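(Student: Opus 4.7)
The plan is to leverage the previous proposition which showed that $\{\Phi_n\}_{n\geq 1}$ is a homomorphism of graded Lie algebras from $(C^\bullet(V_\beta,\mathfrak{g}_\alpha),\llbracket~,~\rrbracket)$ to $(C^{\bullet+1}(V_\beta,V_\beta),[~,~]_B)$. Since a GLA morphism automatically sends Maurer–Cartan elements to Maurer–Cartan elements, and $T$ satisfies $\llbracket T,T\rrbracket=0$, the element $\Phi_1(T)\in C^2(V_\beta,V_\beta)$ will be a Hom-Leibniz bracket on $V_\beta$. The first computational step is to identify $\Phi_1(T)$: directly from the formula,
\[
\Phi_1(T)(v_1,v_2)=-\rho(T(v_1))(v_2)=-\{v_1,v_2\},
\]
so $\Phi_1(T)=-\pi$, where $\pi\in C^2(V_\beta,V_\beta)$ encodes the induced Hom-Leibniz bracket of Proposition \ref{induced-hl}.

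Next, I would translate the cochain maps into bracket form. By definition $d_T=\llbracket T,-\rrbracket$ on $C^{n\geq 1}(V_\beta,\mathfrak{g}_\alpha)$, while the Loday–Pirashvili differential on $C^{n+1}(V_\beta,V_\beta)$ with adjoint coefficients is, by the displayed formula recalled at the end of Section \ref{sec2}, $\delta_\mathrm{HLeib}=(-1)^{n}[\pi,-]_B$ on degree $n+1$ elements. Applying the GLA homomorphism property to $\llbracket T,f\rrbracket$ for $f\in C^n(V_\beta,\mathfrak{g}_\alpha)$ yields
\[
\Phi_{n+1}(d_T f)=[\Phi_1(T),\Phi_n(f)]_B=-[\pi,\Phi_n(f)]_B=(-1)^{n+1}\,\delta_\mathrm{HLeib}(\Phi_n(f)).
\]
Thus $\Phi_{\bullet+1}$ intertwines $d_T$ and $\delta_\mathrm{HLeib}$ up to an overall sign $(-1)^{n+1}$, which is harmless: either absorb it into a rescaling $\widetilde{\Phi}_n:=(-1)^{\binom{n+1}{2}}\Phi_n$, or simply note that multiplication by $\pm 1$ is an isomorphism on each cochain group and therefore induces a well-defined map on cohomology.

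The third step is to handle the bottom dimension $n=0$ separately, since the Maurer–Cartan/derived-bracket formalism only gives the statement in positive degrees. I would define $\Phi_0:C^0(V_\beta,\mathfrak{g}_\alpha)\to C^1(V_\beta,V_\beta)$ by $\Phi_0(x)(v):=-\rho(x)v$ for $x$ with $\alpha(x)=x$, and verify by a direct unpacking that $\Phi_1(d_T x)=\pm\,\delta_\mathrm{HLeib}(\Phi_0(x))$. Concretely, both sides expand in terms of $\rho(Tv_1)\rho(x)v_2$, $\rho(x)\rho(Tv_1)v_2$ and $\rho(T\rho(x)v_1)v_2$, and the equality reduces to the representation identity $\rho(\alpha(x))\circ\rho(y)-\rho(\alpha(y))\circ\rho(x)=\rho([x,y])\circ\beta$ applied with $\alpha(x)=x$, plus the equivariance $\alpha\circ T=T\circ\beta$.

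Combining these three steps, $\Phi_\bullet$ is a cochain map (up to the harmless sign above) and therefore descends to cohomology, giving the required homomorphism $\Phi_\bullet:H^\bullet_T(V_\beta,\mathfrak{g}_\alpha)\to H^{\bullet+1}_\mathrm{HLeib}(V_\beta^{\{~,~\}},V_\beta^{\{~,~\},\{~,~\}})$. The main obstacle is essentially bookkeeping: ensuring that the signs from the Balavoine convention, the derived-bracket convention $(-1)^{m-1}[[\pi,P]_B,Q]_B$, and the Loday–Pirashvili formula line up consistently; and separately checking the $n=0$ boundary case which falls outside the GLA homomorphism of the preceding proposition. No further structural input beyond what has already been established is needed.
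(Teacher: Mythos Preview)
Your proposal is correct and follows essentially the same approach as the paper: use the graded Lie algebra homomorphism from the preceding proposition to send the Maurer--Cartan element $T$ to the Maurer--Cartan element $\Phi_1(T)=-\{~,~\}$, and then observe that the two differentials are induced (up to sign) by bracketing with these respective Maurer--Cartan elements. The paper's proof is considerably more terse and simply says the target cohomology is ``(up to some sign)'' induced by $\Phi_1(T)$, whereas you have spelled out the sign bookkeeping and the $n=0$ boundary case explicitly; both are refinements the paper glosses over.
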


\begin{proof}
Since the collection $\{ \Phi_n \}_{n \geq 1}$ defines a homomorphism of graded Lie algebras from $( C^\bullet (V_\beta, \mathfrak{g}_\alpha), \llbracket ~, ~ \rrbracket)$ to $(C^{\bullet +1} (V_\bullet, V_\beta), [~,~]_B)$, it follows that the map $\Phi_1$ maps Maurer-Cartan elements of the first graded Lie algebra to Maurer-Cartan elements of the second one. Note that the cohomology $H^\bullet_T (V_\beta, \mathfrak{g}_\alpha)$ is induced by the Maurer-Cartan element $T$ of the first graded Lie algebra and (up to some sign) the cohomology $ H^{\bullet +1}_\mathrm{HLeib} (V_\beta^{ \{ ~, ~\} }, V_\beta^{ \{ ~, ~\}, \{ ~, ~ \} })$ is induced by the Maurer-Cartan element $\Phi_1 (T) = - \{ ~, ~ \}$ of the second graded Lie algebra. This concludes the proof.
\end{proof}

\medskip

Note that the differential $d_T$ (of the cochain complex of the embedding tensor $T$) is a derivation for the graded Lie bracket $\llbracket ~, ~ \rrbracket$. To see this, we observe that
\begin{align*}
d_T (\llbracket P, Q \rrbracket) = \llbracket T, \llbracket P, Q \rrbracket \rrbracket 
=~& \llbracket \llbracket T, P \rrbracket, Q \rrbracket + (-1)^m \llbracket P, \llbracket T, Q \rrbracket \rrbracket \\
=~& \llbracket d_T (P), Q \rrbracket + (-1)^m \rrbracket P, d_T (Q) \rrbracket, 
\end{align*}
for $P \in C^m(V_\beta, \mathfrak{g}_\alpha)$ and $Q \in C^n (V_\beta, \mathfrak{g}_\alpha)$. In other words, we obtain a differential graded Lie algebra.

\begin{prop}\label{mc-defor}
Let $T : V \rightarrow \mathfrak{g}$ be an embedding tensor (on the Hom-Lie algebra $\mathfrak{g}^{ [ ~, ~ ] }_\alpha$ with respect to the representation $V^\rho_\beta$). Then for any linear map $T_1 : V \rightarrow \mathfrak{g}$ with $\alpha \circ T_1 = T_1 \circ \beta$, the sum $T+ T_1$ is an embedding tensor if and only if $T_1 \in C^1 (V_\beta, \mathfrak{g}_\alpha)$ is a Maurer-Cartan element in the differential graded Lie algebra $( C^\bullet (V_\beta, \mathfrak{g}_\alpha), \llbracket ~, ~ \rrbracket, d_T)$.
\end{prop}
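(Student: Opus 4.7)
The plan is to view the statement as the standard fact that the Maurer--Cartan elements of a dgLa obtained by twisting by a Maurer--Cartan element $T$ are precisely the perturbations $T_1$ such that $T+T_1$ is again a Maurer--Cartan element in the ambient graded Lie algebra. Concretely, I would apply Theorem \ref{embd-mc} to the sum $T+T_1$ and expand.

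First I would observe that both $T$ and $T_1$ intertwine the twist maps ($\alpha\circ T = T\circ\beta$ and $\alpha\circ T_1 = T_1\circ\beta$), so the same holds for $T+T_1$. Thus, by Theorem \ref{embd-mc}, $T+T_1$ is an embedding tensor if and only if
\begin{align*}
\llbracket T+T_1,\, T+T_1\rrbracket \;=\; 0.
\end{align*}
Next I would expand this bracket by bilinearity into the four terms
\begin{align*}
\llbracket T,T\rrbracket + \llbracket T,T_1\rrbracket + \llbracket T_1,T\rrbracket + \llbracket T_1,T_1\rrbracket.
\end{align*}
The term $\llbracket T,T\rrbracket$ vanishes because $T$ is already assumed to be an embedding tensor, again by Theorem \ref{embd-mc}. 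Both $T$ and $T_1$ live in $C^1(V_\beta,\mathfrak{g}_\alpha)$, which is the odd-parity slot of the derived graded Lie algebra (this is the same parity as $T$ appearing in Theorem \ref{embd-mc} with $\llbracket T,T\rrbracket$ not automatically zero), so graded symmetry gives $\llbracket T,T_1\rrbracket=\llbracket T_1,T\rrbracket$. Using the definition $d_T=\llbracket T,-\rrbracket$, the vanishing condition rewrites as
\begin{align*}
2\, d_T(T_1) + \llbracket T_1,T_1\rrbracket \;=\; 0,
\end{align*}
which is precisely the Maurer--Cartan equation in the dgLa $(C^\bullet(V_\beta,\mathfrak{g}_\alpha),\llbracket~,~\rrbracket, d_T)$.

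There is really no serious obstacle here: the whole proof is a one-line computation once Theorem \ref{embd-mc} is in hand. The only point requiring minor care is the bookkeeping of the graded symmetry to justify $\llbracket T,T_1\rrbracket=\llbracket T_1,T\rrbracket$ and the factor of $2$ in front of $d_T(T_1)$. I would also briefly remark (as the paper already does just before the statement) that $d_T$ is a derivation of $\llbracket~,~\rrbracket$ and $d_T^2=0$, so that $(C^\bullet(V_\beta,\mathfrak{g}_\alpha),\llbracket~,~\rrbracket,d_T)$ is genuinely a dgLa and the Maurer--Cartan formalism applies.
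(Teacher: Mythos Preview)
Your proposal is correct and follows essentially the same route as the paper: apply Theorem \ref{embd-mc} to $T+T_1$, expand $\llbracket T+T_1,T+T_1\rrbracket$ bilinearly, use $\llbracket T,T\rrbracket=0$ and the graded symmetry $\llbracket T,T_1\rrbracket=\llbracket T_1,T\rrbracket$, and recognize the remaining identity as the Maurer--Cartan equation in the twisted dgLa. The only cosmetic difference is that the paper writes the final equation as $d_T(T_1)+\tfrac{1}{2}\llbracket T_1,T_1\rrbracket=0$ rather than your equivalent $2\,d_T(T_1)+\llbracket T_1,T_1\rrbracket=0$.
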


\begin{proof}
Note that $T + T_1$ is an embedding tensor if and only if $\llbracket T + T_1, T + T_1 \rrbracket = 0$. This holds if and only if
\begin{align*}
    \llbracket  T,T \rrbracket + \llbracket  T,T_1 \rrbracket + \llbracket  T_1,T \rrbracket + \llbracket  T_1,T_1 \rrbracket = 0.
\end{align*}
This condition is further equivalent to $d_T (T_1) + \frac{1}{2} \llbracket  T_1,T_1 \rrbracket = 0$, i.e. $T_1$ is a Maurer-Cartan element in the differential graded Lie algebra $( C^\bullet (V_\beta, \mathfrak{g}_\alpha), \llbracket ~, ~ \rrbracket, d_T)$. Hence the proof.
\end{proof}

It follows from the above proposition that the differential graded Lie algebra $( C^\bullet (V_\beta, \mathfrak{g}_\alpha), \llbracket ~, ~ \rrbracket, d_T)$ controls the deformations of the embedding tensor $T$. For this reason, the differential graded Lie algebra $( C^\bullet (V_\beta, \mathfrak{g}_\alpha), \llbracket ~, ~ \rrbracket, d_T)$ is called the controlling structure for the deformations of $T$.

\begin{defn}
Let $T : V \rightarrow \mathfrak{g}$ be an embedding tensor (on $\mathfrak{g}^{ [ ~, ~ ] }_\alpha$ with respect to the representation $V^\rho_\beta$). An {\bf infinitesimal deformation} of $T$ is an embedding tensor of the form $T + \epsilon T_1$, where $\epsilon$ is a parameter with $\epsilon^2 = 0$.
\end{defn}

Let $T+ \epsilon T_1$ be an infinitesimal deformation of $T$. It follows from Proposition \ref{mc-defor} that $T_1 \in C^1 (V_\beta, \mathfrak{g}_\alpha)$ is a $1$-cocycle in the cohomology complex of $T$. Hence the cohomology class of $T_1$ defines an element in $H^1_T (V_\beta, \mathfrak{g}_\alpha)$.

Let $T+ \epsilon T_1$ and $T+ \epsilon T_1'$ be two infinitesimal deformations of $T$. They are said to be {\bf equivalent} if there exists an $\alpha$-invariant element $a \in \mathfrak{g}$ (i.e. $\alpha (a) = a$) such that the pair of maps $(\mathrm{id}_\mathfrak{g} + \epsilon [a, -], \mathrm{id}_V + \epsilon \rho (a))$ defines a homomorphism of embedding tensors from $T+ \epsilon T_1$ to $T+ \epsilon T_1'$. In other words, the following conditions must hold:

(i) the linear map $(\mathrm{id}_\mathfrak{g} + \epsilon [a, -]) : \mathfrak{g} \rightarrow \mathfrak{g}$ is a Hom-Lie algebra morphism,

(ii) the linear map $(\mathrm{id}_V + \epsilon \rho (a)) : V \rightarrow V$ satisfies
\begin{align}
    (\mathrm{id}_V + \epsilon \rho (a)) \circ \beta =~& \beta \circ (\mathrm{id}_V + \epsilon \rho (a)), \\
    (\mathrm{id}_V + \epsilon \rho (a)) \big(   \rho(x)v \big) =~& \rho \big( (\mathrm{id}_\mathfrak{g} + \epsilon [a, -])(x)  \big) (\mathrm{id}_V + \epsilon \rho (a)) (v), \\
    (\mathrm{id}_\mathfrak{g} + \epsilon [a, -]) \circ (T + \epsilon T_1) (v) =~& (T + \epsilon T_1') \circ (\mathrm{id}_V + \epsilon \rho (a))(v), \label{t-commute}
\end{align}
for all $x \in \mathfrak{g}$ and $v \in V$. It is easy to see that the condition (\ref{t-commute}) gives rise to
\begin{align*}
    T_1' (v) - T_1 (v) = [a, T(v)] - T (\rho (a) v) = (\delta_T (a))(v).
\end{align*}
This shows that $T_1$ and $T_2$ are cohomologous. Hence their cohomology classes are the same in $ H^1_T  (V_\beta, \mathfrak{g}_\alpha).$

Conversely, any $1$-cocycle $T_1$ gives rise to the infinitesimal deformation $T+ \epsilon T_1$. Moreover, cohomologous $1$-cocycles correspond to equivalent infinitesimal deformations. As a summary of the above discussions, we obtain the following result.


\begin{thm}
Let $T: V \rightarrow \mathfrak{g}$ be an embedding tensor. Then there is a bijection between the set of all equivalence classes of infinitesimal deformations of $T$ and the first cohomology group $H^1_T (V_\beta, \mathfrak{g}_\alpha).$
\end{thm}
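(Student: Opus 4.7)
The plan is to build the bijection in two directions and then verify it descends to equivalence classes, leveraging the Maurer--Cartan characterization from Proposition \ref{mc-defor}. First I would define the forward map: given an infinitesimal deformation $T + \epsilon T_1$, Proposition \ref{mc-defor} (applied with coefficients in $\mathbf{k}[\epsilon]/(\epsilon^2)$) says $T_1$ must satisfy $d_T(T_1) + \tfrac{1}{2}\llbracket T_1, T_1 \rrbracket = 0$; since the bracket term is quadratic in $\epsilon$ and therefore vanishes, this reduces to $d_T(T_1) = 0$, so $T_1$ is a $1$-cocycle and determines a class $[T_1] \in H^1_T(V_\beta, \mathfrak{g}_\alpha)$. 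For the inverse map, a $1$-cocycle $T_1 \in C^1(V_\beta, \mathfrak{g}_\alpha)$ automatically satisfies $\alpha \circ T_1 = T_1 \circ \beta$ (built into the cochain space), and the same quadratic-vanishing argument shows $T + \epsilon T_1$ is indeed an embedding tensor over $\mathbf{k}[\epsilon]/(\epsilon^2)$.

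Next I would show this correspondence respects equivalence. Given two equivalent infinitesimal deformations $T + \epsilon T_1$ and $T + \epsilon T_1'$ via an $\alpha$-invariant element $a \in \mathfrak{g}$, I would expand equation \eqref{t-commute} modulo $\epsilon^2$. The degree-zero terms agree tautologically, and the coefficient of $\epsilon$ yields
\begin{align*}
T_1(v) + [a, T(v)] = T_1'(v) + T(\rho(a) v),
\end{align*}
which rearranges to $T_1'(v) - T_1(v) = [a, T(v)] - T(\rho(a)v) = (d_T(a))(v)$ using the definition of $d_T$ on $C^0(V_\beta, \mathfrak{g}_\alpha)$. Thus $[T_1] = [T_1']$ in $H^1_T(V_\beta, \mathfrak{g}_\alpha)$. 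Conversely, if $T_1' - T_1 = d_T(a)$ for an $\alpha$-invariant $a$, the same expansion shows the pair $(\mathrm{id}_\mathfrak{g} + \epsilon[a,-], \mathrm{id}_V + \epsilon \rho(a))$ satisfies \eqref{t-commute}; the remaining two conditions (that $\mathrm{id}_\mathfrak{g} + \epsilon[a,-]$ is a Hom-Lie algebra morphism and $\mathrm{id}_V + \epsilon \rho(a)$ is compatible with $\beta$ and with the representation) follow from $\alpha(a) = a$ together with the Hom-Jacobi identity and the representation axioms, again after discarding $\epsilon^2$ terms.

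The two constructions are manifestly inverse to each other at the level of representatives, so they descend to mutually inverse bijections between equivalence classes of infinitesimal deformations and $H^1_T(V_\beta, \mathfrak{g}_\alpha)$. The only step requiring genuine care is the verification that the automorphism conditions on $\mathrm{id}_\mathfrak{g} + \epsilon[a,-]$ and $\mathrm{id}_V + \epsilon \rho(a)$ reduce, modulo $\epsilon^2$, precisely to the hypothesis $\alpha(a)=a$ plus the defining identities of a Hom-Lie algebra and its representation; this is routine but is the place where the $\alpha$-invariance of $a$ is actually used. No other obstruction appears, since Proposition \ref{mc-defor} has already packaged the deformation equation into the vanishing of a single Maurer--Cartan expression whose linearization is exactly the cocycle condition.
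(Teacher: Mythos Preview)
Your proposal is correct and follows essentially the same route as the paper: both directions of the bijection are built via Proposition~\ref{mc-defor} (with the quadratic term vanishing modulo $\epsilon^2$), and the compatibility with equivalence is extracted from the $\epsilon$-coefficient of equation~\eqref{t-commute}, yielding $T_1' - T_1 = d_T(a)$. You are somewhat more explicit than the paper about verifying that the remaining morphism conditions on $(\mathrm{id}_\mathfrak{g} + \epsilon[a,-], \mathrm{id}_V + \epsilon\rho(a))$ reduce to the Hom-Lie and representation axioms, but the overall structure and key ideas are identical.
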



\medskip

\section{Hom-Lie-Leibniz triples}\label{sec5}
In this section, we first consider Hom-Lie-Leibniz triples as the full underlying structure of embedding tensors. We show that the category of Hom-Leibniz algebras is closely related to the category of Hom-Lie-Leibniz triples. Finally, we construct an $L_\infty$-algebra (using Voronov's higher derived brackets) whose Maurer-Cartan elements correspond to Hom-Lie-Leibniz triples.

\begin{defn}
    A {\bf Hom-Lie-Leibniz triple} is a triple $(\mathfrak{g}_\alpha^{[~,~]}, V_\beta^\rho, T)$ in which $\mathfrak{g}_\alpha^{[~,~]}$ is a Hom-Lie algebra, $V_\beta^\rho$ is a representation and $T: V \rightarrow\mathfrak{g}$ is an embedding tensor.
\end{defn}

It follows from the above definition that a Hom-Lie-Leibniz triple captures the full information (including the underlying structures) of an embedding triple. Let $(\mathfrak{g}_\alpha^{[~,~]}, V_\beta^\rho, T)$ and $((\mathfrak{g}')_{\alpha'}^{[~,~]'}, (V')_{\beta'}^{\rho'}, T')$ be two Hom-Lie-Leibniz triples. A {\bf homomorphism} of Hom-Lie-Leibniz triples from $(\mathfrak{g}_\alpha^{[~,~]}, V_\beta^\rho, T)$ to $((\mathfrak{g}')_{\alpha'}^{[~,~]'}, (V')_{\beta'}^{\rho'}, T')$ is given by a homomorphism of embedding tensors from $T$ to $T'$ (cf. Definition \ref{em-ten-mor}). The set of all Hom-Lie-Leibniz triples and homomorphisms between them forms a category, denoted by {\bf HLLTriple}.

\begin{prop}
    (i) Let $(\mathfrak{g}_\alpha^{[~,~]}, V_\beta^\rho, T)$ be a Hom-Lie-Leibniz triple. Then $V_\beta^{ \{ ~, ~ \} }$ is a Hom-Leibniz algebra, where $\{ u, v \} := \rho (Tu) v,$ for $u, v \in V$.

    (ii) Let $(\mathfrak{g}_\alpha^{[~,~]}, V_\beta^\rho, T)$, $((\mathfrak{g}')_{\alpha'}^{[~,~]'}, (V')_{\beta'}^{\rho'}, T')$ be two Hom-Lie-Leibniz triples and $(\varphi, \psi)$ be a homomorphism between them. Then $\psi : V \rightarrow V'$ is a homomorphism of induced Hom-Leibniz algebras from $V_\beta^{ \{ ~, ~ \} }$ to $(V')_{\beta'}^{ \{ ~, ~ \}' }$.
\end{prop}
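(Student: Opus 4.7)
The plan is to treat the two assertions separately, noting that each is essentially a bookkeeping exercise in unpacking definitions, with no substantial new computation required.

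For part (i), I would simply observe that a Hom-Lie-Leibniz triple $(\mathfrak{g}_\alpha^{[~,~]}, V_\beta^\rho, T)$ by definition packages exactly the data of a Hom-Lie algebra, a representation, and an embedding tensor, so that Proposition \ref{induced-hl} applies verbatim. That earlier proposition was already established (via the graph characterization, where $Gr(T) \subset (\mathfrak{g} \oplus V)^{\{~,~\}}_{\alpha \oplus \beta}$ is seen to be a Hom-Leibniz subalgebra of the hemi-semidirect product, and then $Gr(T)$ is identified with $V$). Consequently $V_\beta^{\{~,~\}}$ with $\{u,v\} := \rho(Tu)v$ is a Hom-Leibniz algebra without further work.

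For part (ii), I need to verify two conditions for $\psi : V \to V'$ to be a morphism of the induced Hom-Leibniz algebras, namely $\beta' \circ \psi = \psi \circ \beta$ and $\psi(\{u,v\}) = \{\psi(u), \psi(v)\}'$ for all $u,v \in V$. The first is an explicit part of the defining data of a homomorphism of embedding tensors, recorded in Definition \ref{em-ten-mor}. The second follows from a short chain of equalities: starting from $\psi(\{u,v\}) = \psi(\rho(Tu)v)$, the equivariance condition $\psi(\rho(x)w) = \rho'(\varphi(x))\psi(w)$ (with $x = T(u)$ and $w = v$) converts this to $\rho'(\varphi(Tu))\psi(v)$, and then the intertwining relation $\varphi \circ T = T' \circ \psi$ rewrites the inner argument as $T'(\psi(u))$, producing $\rho'(T'\psi(u))\psi(v) = \{\psi(u),\psi(v)\}'$.

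The main ``obstacle'' here is essentially nonexistent; the argument is a direct composition of the three compatibility conditions bundled into Definition \ref{em-ten-mor}, and one only needs to be careful to cite part (i) (or equivalently Proposition \ref{induced-hl}) so that the target and source structures $V^{\{~,~\}}_\beta$ and $(V')^{\{~,~\}'}_{\beta'}$ are in fact available as Hom-Leibniz algebras before checking that $\psi$ respects them. No Hom-Jacobi or Hom-Leibniz identity enters the verification, since those identities are already absorbed in the induced structures furnished by part (i).
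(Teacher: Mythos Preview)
Your proposal is correct and follows essentially the same approach as the paper: part (i) is reduced to Proposition \ref{induced-hl}, and part (ii) is the same chain of equalities $\psi(\{u,v\}) = \psi(\rho(Tu)v) = \rho'(\varphi(Tu))\psi(v) = \rho'(T'\psi(u))\psi(v) = \{\psi(u),\psi(v)\}'$. You are in fact slightly more careful than the paper in explicitly recording the twist compatibility $\beta' \circ \psi = \psi \circ \beta$, which the paper leaves implicit.
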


\begin{proof}
    Part (i) follows from Proposition \ref{induced-hl}. For part (ii), we observe that
    \begin{align*}
         \psi ( \{ u, v\}) = \psi \big(   \rho (Tu) v \big) = \rho' \big(  \varphi T(u)  \big) (\psi (v)) = \rho' \big(  T' \psi (u)   \big) (\psi (v)) = \{ \psi (u) , \psi (v) \}'
    \end{align*}
    for $u, v \in V$. This proves the result.
\end{proof}

It follows from the above proposition that there is a functor $\mathcal{F} : {\bf HLLTriple} \rightarrow {\bf HLeib}$ from the category of Hom-Lie-Leibniz triples to the category of Hom-Leibniz algebras. In the following, we will construct a functor in the other direction. 

Let $\mathfrak{h}_\alpha^{ \{ ~, ~ \} }$ be a Hom-Leibniz algebra. Take $\overline{\mathfrak h} = \mathfrak{h}/ I$, where $I$ is the ideal of the Hom-Leibniz algebra generated by the elements $\{ x, y \}$, for $x, y \in \mathfrak{h}$. Note that the linear map $\alpha : \mathfrak{h} \rightarrow \mathfrak{h}$ induces a map (denoted by the same notation) $\alpha : \overline{\mathfrak h} \rightarrow \overline{\mathfrak h}$. We define a bilinear skewsymmetric bracket $[~,~] : \overline{\mathfrak h} \times \overline{\mathfrak h} \rightarrow \overline{\mathfrak h}$ by
\begin{align*}
    [\langle x \rangle, \langle y \rangle] := \langle \{x, y \} \rangle, \text{ for } \langle x \rangle, \langle y \rangle \in \overline{\mathfrak h}.
\end{align*}
Here $\langle x \rangle \in \overline{\mathfrak h}$ denotes the class of the element $x \in \mathfrak{h}$. It turns out that  $(\overline{\mathfrak h})_\alpha^{[~,~]}$ is a Hom-Lie algebra. We define another map $\rho : \overline{\mathfrak h} \rightarrow \mathrm{End} (\mathfrak{h})$ by $\rho (\langle x \rangle) (y) := \{ x , y \},$ for $\langle x \rangle \in \overline{\mathfrak h}$ and $y \in \mathfrak{h}$. It is easy to verify that the map $\rho$ makes $\mathfrak{h}^\rho_\alpha$ into a representation of the Hom-Lie algebra $(\overline{\mathfrak h})^{[~,~]}_\alpha$. With these notations, the quotient map $q: \mathfrak{h} \rightarrow \overline{\mathfrak h} = \mathfrak{h}/I$ is an embedding tensor on the Hom-Lie algebra $(\overline{\mathfrak h})^{[~,~]}_\alpha$ with respect to the representation $\mathfrak{h}_\alpha^\rho$. In other words, $( (\overline{\mathfrak h})^{[~,~]}_\alpha , \mathfrak{h}^\rho_\alpha, q  )$ is a Hom-Lie-Leibniz triple. Moreover, the induced Hom-Leibniz algebra structure coincides with the given one.

Let $\mathfrak{h}_\alpha^{\{ ~, ~ \}}$ and $(\mathfrak{h}')_{\alpha'}^{ \{ ~, ~ \}' }$ be two Hom-Leibniz algebras and $\psi : \mathfrak{h} \rightarrow \mathfrak{h}'$ be a homomorphism of Hom-Leibniz algebras. Then a straightforward calculation shows that the pair of maps
\begin{align*}
    (\overline{\psi}, \psi) : ( (\overline{\mathfrak h})^{[~,~]}_\alpha , \mathfrak{h}^\rho_\alpha, q  ) \rightsquigarrow ( (\overline{\mathfrak{h}'})^{[~,~]'}_{\alpha'} , (\mathfrak{h}')^{\rho'}_{\alpha'}, q'  ) 
\end{align*}
defines a homomorphism of Hom-Lie-Leibniz triples, where the map  $\overline{\psi} : \overline{\mathfrak h} \rightarrow \overline{\mathfrak{h}'}$ is given by $\overline{\psi} (\langle x \rangle) = \langle \psi (x) \rangle$, for $\langle x \rangle \in \overline{h}$. Hence, we obtain a functor $\mathcal{G}: {\bf HLeib} \rightarrow {\bf HLLTriple}$ from the category of Hom-Leibniz algebras to the category of Hom-Lie-Leibniz triples.

\medskip

Next, we give the Maurer-Cartan characterization of Hom-Lie-Leibniz triples. We first need the definition and Voronov's construction of an $L_\infty$-algebra.
\begin{defn}\label{l-inf-defn}
An {\bf $L_\infty$-algebra} (strongly homotopy Lie algebra) is a pair $(L = \oplus_{i \in \mathbb{Z}} L_i, \{ l_k \}_{k \geq 1})$ consisting of a graded vector space $L = \oplus_{i \in \mathbb{Z} }L_i$ equipped with a collection $\{ l_k: L^{\otimes k} \rightarrow L \}_{k \geq 1}$ of degree $1$ graded linear maps satisfying the following set of identities:

- (graded symmetry) $l_k (x_{\sigma (1)}, \ldots, x_{\sigma (k)}) = \varepsilon(\sigma) l_k (x_1, \ldots, x_k),$ for all $k\geq 1$, $\sigma \in \mathbb{S}_k$,

- (higher Jacobi identities) for any $n \geq 1$ and homogeneous elements $x_1, \ldots, x_n \in L$,
\begin{align*}
\sum_{i+j=n+1} \sum_{\sigma \in \mathbb{S}_{(i, n-i)}} \varepsilon (\sigma)~ l_j \big(   l_i ( x_{\sigma (1)}, \ldots, x_{\sigma (i)}), x_{\sigma (i+1)}, \ldots, x_{\sigma (n)} \big) = 0.
\end{align*}
Here $\varepsilon(\sigma)$ is the standard Koszul sign that appears in the graded context.
\end{defn}

Let $(L, \{ l_k \}_{k \geq 1})$ be a $L_\infty$-algebra. An element $\theta \in L_0$ is said to be a {\bf Maurer-Cartan element} of the $L_\infty$-algebra if
\begin{align}\label{mc-iden}
\sum_{k=1}^\infty \frac{1}{k!} l_k (\underbrace{\theta, \ldots, \theta}_{k \text{ times}}) = 0.
\end{align}
Note that the identity (\ref{mc-iden}) makes sense when the infinite sum converges. If $\theta$ is a Maurer-Cartan element of the $L_\infty$-algebra $(L, \{ l_k \}_{k \geq 1})$, then the pair $(L, \{ l_k^\theta \}_{k \geq 1})$ is also an $L_\infty$-algebra, where
\begin{align*}
l_k^\theta (x_1, \ldots, x_k) = \sum_{n=0}^\infty \frac{1}{n!} ~ l_{n+k} (\underbrace{\theta, \ldots, \theta}_{n \text{ times}}, x_1, \ldots, x_k), \text{ for } k \geq 1 \text{ and } x_1, \ldots, x_k \in L.
\end{align*}
The $L_\infty$-algebra $(L, \{ l_k^\theta \}_{k \geq 1})$ is said to obtained from $(L, \{ l_k \}_{k \geq 1})$ twisted by the Maurer-Cartan element $\theta$. For any $\theta' \in L_0$, the element $\theta + \theta'$ is a Maurer-Cartan element of the $L_\infty$-algebra $(L, \{ l_k \}_{k \geq 1})$ if and only if $\theta'$ is a Maurer-Cartan element of the twisted $L_\infty$-algebra $(L, \{ l_k^\theta \}_{k \geq 1})$.

There is a useful construction of an $L_\infty$-algebra given by Voronov \cite{voro}. His construction relies on the use of higher derived brackets. We first recall the construction. 

\begin{defn}\label{defn-v-d}
A {\bf $V$-data} is a quadruple $(\mathfrak{l}, \mathfrak{a}, P, \Delta)$ in which $\mathfrak{l}$ is a graded Lie algebra (with the bracket $[~,~]$), $\mathfrak{a} \subset \mathfrak{l}$ is an abelian Lie subalgebra, $P : \mathfrak{l} \rightarrow \mathfrak{l}$ is a projection with $\mathrm{im} (P) = \mathfrak{a}$ and $\mathrm{ker}(P) \subset \mathfrak{l}$ a graded Lie algebra, and $\Delta \in \mathrm{ker}(P)_1$ that satisfies $[\Delta, \Delta] = 0$.
\end{defn}

\begin{thm}\label{thm-voro}  Let $(\mathfrak{l}, \mathfrak{a}, P, \Delta)$ be a $V$-data. Then for any graded Lie subalgebra $\mathfrak{q} \subset \mathfrak{l}$ with $[\Delta, \mathfrak{q}] \subset \mathfrak{q}$, the graded vector space $s^{-1} \mathfrak{q} \oplus \mathfrak{a}$ inherits an $L_\infty$-algebra structure with the operations $\{ l_k \}_{k \geq 1}$  given by the following: for homogeneous elements $x, y \in \mathfrak{q}$ and $a, a_1, \ldots, a_k \in \mathfrak{a}$,
\begin{align*}
l_1 \big(  (s^{-1}x, a)  \big) =~&  (-s^{-1}[\Delta, x], P (x + [\Delta, a])), \\
l_2 \big( (s^{-1}x, 0), (s^{-1}y, 0)   \big) =~& ( (-1)^{|x|} s^{-1} [x,y], 0), \\
l_k \big(   (0, a_1), \ldots, (0, a_k) \big) =~& (0, P [ \cdots [[ \Delta, a_1], a_2], \ldots, a_k ]), \text{ for } k \geq 1, \\
l_k \big( (s^{-1} x, 0), (0, a_1), \ldots, (0, a_{k-1}) \big) =~&   (0, P [ \cdots [[ x, a_1], a_2], \ldots, a_{k-1}]), \text{ for } k \geq 2,\\
\text{ and up to permutations of the} & \text{ above inputs, all other maps vanish.}
\end{align*} 
\end{thm}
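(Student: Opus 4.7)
My plan is to verify the two defining axioms of an $L_\infty$-algebra, namely graded symmetry and the higher Jacobi identities, for the operations $\{l_k\}_{k \geq 1}$ on $s^{-1}\mathfrak{q} \oplus \mathfrak{a}$. Graded symmetry on arguments from $\mathfrak{a}$ will follow from the graded Jacobi identity in $\mathfrak{l}$ combined with the fact that $\mathfrak{a}$ is an abelian Lie subalgebra: the iterated bracket $[\cdots[[\Delta, a_1], a_2], \ldots, a_k]$ is graded symmetric in the $a_i$'s modulo terms lying in $\ker(P)$, and the outer projection by $P$ kills these corrections. Symmetry in the mixed cases is handled the same way, using that the bracket $l_2$ of two $\mathfrak{q}$-inputs is graded symmetric after the degree shift $s^{-1}$ (the sign $(-1)^{|x|}$ in the formula is precisely the correction for this).

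For the higher Jacobi identity at level $n$, I would split the verification according to how many of the $n$ inputs lie in $s^{-1}\mathfrak{q}$ versus $\mathfrak{a}$. Since $l_k$ vanishes whenever more than two inputs come from $s^{-1}\mathfrak{q}$, the analysis reduces to three cases. \emph{Case A} (all inputs in $\mathfrak{a}$): the Jacobi identity becomes
\begin{align*}
\sum_{i+j=n+1}\sum_{\sigma\in\mathbb{S}_{(i,n-i)}}\varepsilon(\sigma)\, P\bigl[\cdots\bigl[P[\cdots[\Delta, a_{\sigma(1)}],\ldots, a_{\sigma(i)}], a_{\sigma(i+1)}\bigr], \ldots, a_{\sigma(n)}\bigr] = 0,
\end{align*}
and because $\ker(P)$ is a graded Lie subalgebra, the inner $P$ may be dropped modulo terms that die under the outer $P$; the resulting telescoping sum then vanishes by the Maurer--Cartan condition $[\Delta,\Delta]=0$ and the graded Jacobi identity in $\mathfrak{l}$. \emph{Case B} (exactly one input $s^{-1}x$ from $s^{-1}\mathfrak{q}$): the hypothesis $[\Delta,\mathfrak{q}]\subset\mathfrak{q}$ ensures that both terms $-s^{-1}[\Delta,x]$ and $P(x+[\Delta,a])$ appearing in $l_1$ are consistent, and the analysis parallels Case A with $x$ playing the role of $\Delta$ in the innermost slot. \emph{Case C} (two inputs $s^{-1}x,s^{-1}y$ from $s^{-1}\mathfrak{q}$): only the combinations $l_2$ on two $\mathfrak{q}$-inputs and $l_k$ on one $\mathfrak{q}$-input contribute, and the identity collapses to the graded Jacobi identity in $\mathfrak{l}$ applied to iterated brackets of the form $[\cdots[[x,y], a_1], \ldots, a_{k-2}]$.

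The main obstacle will be sign bookkeeping: the Koszul signs attached to the shuffles $\sigma \in \mathbb{S}_{(i,n-i)}$ must conspire with the sign $(-1)^{|x|}$ coming from the suspension and with the signs produced when expanding $[\Delta,\Delta]$ via Jacobi, so that the terms in Cases A--C really pair up and cancel. A cleaner route that sidesteps much of this bookkeeping is to encode the whole structure as a single coderivation $D$ on the cofree cocommutative coalgebra $S^c(s^{-1}\mathfrak{q}\oplus\mathfrak{a})$, where $D$ is the sum of the suspension of the Lie bracket with $\mathfrak{q}$ and the coderivation induced by the higher derived brackets with $\Delta$; the full family of $L_\infty$-identities is then equivalent to the single equation $D^2=0$, which follows at once from $[\Delta,\Delta]=0$ together with the graded Jacobi identity in $\mathfrak{l}$ and the hypothesis $[\Delta,\mathfrak{q}]\subset\mathfrak{q}$.
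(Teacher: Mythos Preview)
The paper does not actually prove this theorem: it is stated as a background result recalled from Voronov's original paper \cite{voro} (see the sentence preceding Definition \ref{defn-v-d}: ``There is a useful construction of an $L_\infty$-algebra given by Voronov \cite{voro}. His construction relies on the use of higher derived brackets. We first recall the construction.''). So there is nothing in the present paper to compare your proposal against.

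That said, your outline is the standard route to this result and is essentially how Voronov himself argues. The key structural ingredients you identify --- that $\mathfrak{a}$ is abelian so the iterated derived brackets are symmetric modulo $\ker(P)$, that $\ker(P)$ is a graded Lie subalgebra so inner projections can be dropped under the outer $P$, and that $[\Delta,\Delta]=0$ together with $[\Delta,\mathfrak{q}]\subset\mathfrak{q}$ forces the remaining cancellations --- are exactly the right ones. Your closing remark about encoding everything as a square-zero coderivation on $S^c(s^{-1}\mathfrak{q}\oplus\mathfrak{a})$ is also in the spirit of Voronov's treatment and is indeed the cleanest way to organize the sign bookkeeping you flag as the main difficulty. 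If you were to write this up in full, the coderivation approach would be preferable; the case-by-case verification, while correct in principle, is tedious to execute without error.
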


In the following, we will use the above theorem to construct an $L_\infty$-algebra whose Maurer-Cartan elements are precisely Hom-Lie-Leibniz triples. Let $\mathfrak{g}$ and $V$ be two vector spaces equipped with linear maps $\alpha: \mathfrak{g} \rightarrow \mathfrak{g}$ and $\beta: V \rightarrow V$ (not necessarily with any additional structures). We consider the linear map $\alpha \oplus \beta: \mathfrak{g} \oplus V \rightarrow \mathfrak{g} \oplus V$ on the direct sum vector space. Let
\begin{align*}
\mathfrak{l} =  \big( C^{\bullet +1} ( (\mathfrak{g} \oplus V)_{\alpha \oplus \beta}, (\mathfrak{g} \oplus V )_{\alpha \oplus \beta}) = \oplus_{n=0}^\infty C^{n+1} ( (\mathfrak{g} \oplus V)_{\alpha \oplus \beta}, (\mathfrak{g} \oplus V)_{\alpha \oplus \beta} ), [~, ~]_B  \big)
\end{align*}
be the graded Lie algebra with Balavoine's bracket. Then it is easy to check that the graded subspace $\mathfrak{a} = \oplus_{n=0}^\infty C^{n+1} (V_\beta, \mathfrak{g}_\alpha)$ is an abelian Lie subalgebra of $\mathfrak{l}$. Let $P: \mathfrak{l} \rightarrow \mathfrak{l}$ be the projection onto the subspace $\mathfrak{a}$. Then the quadruple $(\mathfrak{l}, \mathfrak{a}, P, \Delta = 0)$ is a $V$-data.

For each $n \geq 0$, we define
\begin{align*}
C^{n,1} (\mathfrak{g}_\alpha, V_\beta)  = \mathrm{Hom}(\wedge^n \mathfrak{g}_\alpha \otimes V_\beta , V_\beta) = \{ f \in \mathrm{Hom} (\wedge^n \mathfrak{g} \otimes V, V ) ~|~ f \circ (\wedge^n \alpha \otimes \beta) = \beta \circ f \}.
\end{align*}
With the above notations, we have the following.
\begin{prop}
The graded subspace 
\begin{align}\label{defn-q}
\mathfrak{q} = \oplus_{n=0}^\infty \big( \mathrm{ Hom} (\wedge^{n+1} \mathfrak{g}_\alpha, \mathfrak{g}_\alpha) \oplus C^{{n,1}} (\mathfrak{g}_\alpha, V_\beta)  \big) \subset \mathfrak{l}
\end{align}
is a graded Lie subalgebra of $\mathfrak{l}$.
\end{prop}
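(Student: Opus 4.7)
The plan is to verify that for any two homogeneous elements $P, Q \in \mathfrak{q}$, the Balavoine bracket $[P,Q]_B$ again belongs to $\mathfrak{q}$. I first think of elements of $\mathfrak{q}$ as cochains in $\mathfrak{l}$ supported on certain admissible inputs: a cochain is either of \emph{Type A}, meaning it is a skew-symmetric element of $\mathrm{Hom}(\wedge^{n+1}\mathfrak{g}_\alpha, \mathfrak{g}_\alpha)$ extended by zero on tuples having any $V$-entry; or of \emph{Type B}, meaning it is an element of $\mathrm{Hom}(\wedge^n\mathfrak{g}_\alpha \otimes V_\beta, V_\beta)$ extended by zero on tuples whose last slot is not in $V$ or whose other slots are not all in $\mathfrak{g}$. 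In both cases the cochain is skew-symmetric in its $\mathfrak{g}$-slots and intertwines the twist $\alpha \oplus \beta$.

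The first key observation is that in the formula for $P \diamond Q$, the final input $x_{m+n-1}$ is never shuffled: it is fed either directly into the last slot of $P$ (for summands with $i < m$), or into the last slot of $Q$ (for $i = m$, whose output then occupies the last slot of $P$). Hence the notion of \emph{last slot} is compatible with the $\diamond$ operation, which is precisely the structural fact needed to preserve the Type B condition.

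I then case-split on the types of $P$ and $Q$. If both are Type A, then $P \diamond Q$ has only $\mathfrak{g}$-valued outputs feeding into $\mathfrak{g}$-slots, so the bracket is again of Type A; skew-symmetry follows from the standard observation that Balavoine's bracket restricted to skew-symmetric cochains coincides with the Nijenhuis--Richardson bracket, whose $(i{-}1, n{-}1)$-shuffle sum is exactly the required skew-symmetrization. If $P$ is Type A and $Q$ is Type B, the term $P \diamond Q$ vanishes because $Q$'s $V$-valued output cannot populate any of $P$'s $\mathfrak{g}$-slots, while $Q \diamond P$ is forced to have the insertion position $i$ strictly less than the arity of $Q$ so that the designated $V$-slot of $Q$ still receives $x_{m+n-1}$; a brief inspection shows this surviving term is Type B. If both are Type B, a parity mismatch forces $i = m$ in $P \diamond Q$ (so that $Q$'s $V$-output fills $P$'s $V$-slot) and symmetrically in $Q \diamond P$; in either case the last input is the unique $V$-input and the output is in $V$, so the bracket is again Type B. Compatibility with the twists is automatic since each summand in Balavoine's formula intertwines $\alpha$ and $\beta$ slotwise.

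The main obstacle lies in the careful bookkeeping of skew-symmetry in the $\mathfrak{g}$-slots in the mixed and Type B + Type B cases, where one must recognize the shuffle sums in Balavoine's formula as the appropriate skew-symmetrization operators applied to the skew parts of $P$ and $Q$, and check that the sign $(-1)^{(m-1)(n-1)}$ built into the definition of $[\cdot,\cdot]_B$ is exactly the one needed for the resulting cochain to land in $\mathrm{Hom}(\wedge^{p+q+1}\mathfrak{g}_\alpha, \mathfrak{g}_\alpha)$ or $\mathrm{Hom}(\wedge^{p+q}\mathfrak{g}_\alpha \otimes V_\beta, V_\beta)$ as appropriate.
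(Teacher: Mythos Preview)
Your approach is correct and coincides with the paper's: the paper phrases the result via a bidegree convention (elements of $\mathfrak{q}_n$ are declared to have bidegree $n|0$, and one checks---the paper defers the verification to \cite{sheng-embd}---that the Balavoine bracket sends bidegrees $m|0$ and $n|0$ to $(m+n)|0$), and your Type~A/Type~B case analysis is precisely this bidegree computation made explicit. Your observation that the final input $x_{m+n-1}$ is never shuffled is exactly the structural fact that drives the preservation of the $V$-weight in both presentations.
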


A map $f \in C^{n+1} ( (\mathfrak{g} \oplus V)_{\alpha \oplus \beta} , (\mathfrak{g} \oplus V)_{\alpha \oplus \beta})$ is said to have the bidegree $n | 0$ if
\begin{align*}
    f \in \mathrm{Hom} (\wedge^{n+1} \mathfrak{g}_\alpha, \mathfrak{g}_\alpha) \oplus \mathrm{Hom} (\wedge^n \mathfrak{g}_\alpha \otimes V_\beta, V_\beta).
\end{align*}
If $f$ has bidegree $n|0$ and $g$ has bidegree $m|0$, then it is a straightforward calculation to verify that $[f,g]_B$ has bidegree $(m+n)|0$ (see \cite{sheng-embd}). This observation yields proof of the above proposition. Thus, as a consequence of Theorem \ref{thm-voro}, we obtain the following.

\begin{thm}
Let $\mathfrak{g}, V$ be two vector spaces equipped with linear maps $\alpha : \mathfrak{g} \rightarrow \mathfrak{g}$ and $\beta : V \rightarrow V$. Then there is an $L_\infty$-algebra structure on the graded vector space $s^{-1}{ \mathfrak{q}} \oplus \mathfrak{a}$ with the structure maps $\{ l_k \}_{k \geq 1}$ given by
\begin{align*}
l_2 ( (s^{-1} q , 0), (s^{-1} q', 0)) =~& ( (-1)^{|q|} s^{-1} [q, q']_B, 0),\\
l_k ( (s^{-1} q , 0), (0, p_1) , \ldots, (0, p_{k-1}) ) =~& (0, P [ \cdots [[ q, p_1]_B, p_2]_B, \ldots, p_{k-1}]_B), \text{ for } k \geq 2.
\end{align*}
Here $q, q' \in \mathfrak{q}$ (which we consider as elements $s^{-1}q$, $s^{-1} q' \in s^{-1} \mathfrak{q}$) and $p_1, \ldots, p_{k-1} \in \mathfrak{a}$ are homogeneous elements. Up to permutations of the above inputs, all other maps vanish.
\end{thm}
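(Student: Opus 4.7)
The plan is to obtain the $L_\infty$-structure as a direct application of Voronov's construction (Theorem \ref{thm-voro}) to the quadruple $(\mathfrak{l}, \mathfrak{a}, P, \Delta = 0)$ together with the graded Lie subalgebra $\mathfrak{q}$ defined in (\ref{defn-q}). Before invoking Voronov's theorem, the key verifications I would carry out are that $(\mathfrak{l}, \mathfrak{a}, P, 0)$ is a genuine $V$-data in the sense of Definition \ref{defn-v-d}, and that $\mathfrak{q}$ is a graded Lie subalgebra of $\mathfrak{l}$ with $[\Delta, \mathfrak{q}] \subset \mathfrak{q}$; the latter containment is automatic since $\Delta = 0$, and $[\Delta, \Delta] = 0$ is also trivial.

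For the $V$-data, I would first verify that $\mathfrak{a} = \oplus_{n=0}^\infty C^{n+1}(V_\beta, \mathfrak{g}_\alpha)$ is an abelian Lie subalgebra under the Balavoine bracket. The reason is that any $f, g \in \mathfrak{a}$ have inputs only in $V$ and outputs in $\mathfrak{g}$ (extended by zero on mixed inputs of $(\mathfrak{g} \oplus V)^{\otimes(n+1)}$); inside the composition $f \diamond g$, the inner map $g$ produces a $\mathfrak{g}$-valued element that is then fed into a slot of $f$, but $f$'s remaining slots require $V$-arguments, so the expression vanishes. Symmetrically $g \diamond f = 0$, and hence $[f,g]_B = 0$. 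The map $P$ projecting every cochain onto its all-$V$-input, $\mathfrak{g}$-output component is clearly an idempotent with image $\mathfrak{a}$, and a bidegree inspection shows that $\ker P$ is closed under $[~,~]_B$ (since the Balavoine bracket preserves the appropriate "not all-$V$ in and $\mathfrak{g}$ out" types).

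That $\mathfrak{q}$ is a graded Lie subalgebra is essentially already argued in the text preceding the theorem via bidegrees: elements of $\mathfrak{q}$ have bidegree $n|0$, and, as recalled from \cite{sheng-embd}, $[f,g]_B$ has bidegree $(m+n)|0$ whenever $f,g$ have bidegrees $m|0, n|0$. So I would simply invoke this fact rather than redo the bookkeeping.

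With the $V$-data and $\mathfrak{q}$ in hand, Voronov's theorem yields an $L_\infty$-algebra on $s^{-1}\mathfrak{q} \oplus \mathfrak{a}$, and the final task is to specialize his formulas to $\Delta = 0$. The term $l_1\bigl((s^{-1}x, a)\bigr) = (-s^{-1}[\Delta,x], P(x + [\Delta,a])) = (0, P(x))$ vanishes because every $x \in \mathfrak{q}$ has at least one $\mathfrak{g}$-input and hence $P(x) = 0$; the pure $\mathfrak{a}$-brackets $l_k\bigl((0,a_1),\ldots,(0,a_k)\bigr)$ also vanish since they are built from $\Delta$; and only the formulas for $l_2$ on two $\mathfrak{q}$-inputs and $l_k$ ($k \geq 2$) on one $\mathfrak{q}$-input with $(k-1)$ inputs from $\mathfrak{a}$ survive, recovering the expressions in the statement. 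I expect the main stumbling block in this whole argument to be the bidegree bookkeeping that certifies both the closure of $\ker P$ under $[~,~]_B$ and the graded Lie subalgebra property of $\mathfrak{q}$; these are routine but require careful attention to which $\mathfrak{g}$- versus $V$-slots are involved in each term of the $\diamond$ formula.
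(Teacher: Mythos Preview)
Your proposal is correct and follows exactly the paper's approach: the theorem is stated there as a direct consequence of Voronov's Theorem~\ref{thm-voro} applied to the $V$-data $(\mathfrak{l}, \mathfrak{a}, P, \Delta = 0)$ together with the graded Lie subalgebra $\mathfrak{q}$, and your additional unpacking of why the $\Delta$-dependent terms vanish is accurate. One minor caveat: the claim that ``every $x \in \mathfrak{q}$ has at least one $\mathfrak{g}$-input'' fails for the $\mathrm{Hom}(V_\beta, V_\beta)$ summand in degree $0$, but $P(x) = 0$ still holds there since the output lies in $V$ rather than $\mathfrak{g}$.
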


Note that the graded vector space $s^{-1} \mathfrak{q} \oplus \mathfrak{a}$ is explicitly given by
\begin{align*}
s^{-1} \mathfrak{q} \oplus \mathfrak{a} = \oplus_{i=0}^\infty (s^{-1} \mathfrak{q} \oplus \mathfrak{a})_i = \oplus_{i=0}^\infty \big(  \mathrm{ Hom} (\wedge^{i+2} \mathfrak{g}_\alpha, \mathfrak{g}_\alpha)  \oplus C^{i+1,1} (\mathfrak{g}_\alpha, V_\beta) \oplus C^{i+1} (V_\beta, \mathfrak{g}_\alpha)  \big).
\end{align*}
The following result shows that the above $L_\infty$-algebra characterizes Hom-Lie-Leibniz triples as its Maurer-Cartan elements.

\begin{thm}
Let $\mathfrak{g}, V$ be two vector spaces equipped with linear maps $\alpha : \mathfrak{g} \rightarrow \mathfrak{g}$ and $\beta : V \rightarrow V$. Suppose there are linear maps
\begin{align*}
\mu \in \mathrm{Hom}(\wedge^2 \mathfrak{g}_\alpha, \mathfrak{g}_\alpha), ~~~~ \rho \in \mathrm{Hom} (\mathfrak{g}_\alpha \otimes V_\beta , V_\beta) ~~~~ \text{ and } ~~~~ T \in \mathrm{Hom} (V_\beta, \mathfrak{g}_\alpha).
\end{align*}
Then $(\mathfrak{g}_\alpha^\mu, V^\rho_\beta, T)$ is a Hom-Lie-Leibniz triple if and only if the element $\theta = (s^{-1} (\mu \boxplus \rho), T) \in (s^{-1} \mathfrak{q} \oplus \mathfrak{a})_0$ is a Maurer-Cartan element in the $L_\infty$-algebra $(s^{-1} \mathfrak{q} \oplus \mathfrak{a}, \{ l_k \}_{k \geq 1})$. Here $\mu \boxplus \rho \in \mathrm{ Hom} (\wedge^{2} \mathfrak{g}_\alpha, \mathfrak{g}_\alpha)  \oplus C^{1,1} (\mathfrak{g}_\alpha, V_\beta)$ is the element given by
\begin{align*}
    (\mu \boxplus \rho) \big(  (x,u), (y, v)  \big) = (\mu (x, y), \rho (x, v)), \text{ for } (x, u), (y, v) \in \mathfrak{g} \oplus V.
\end{align*}
\end{thm}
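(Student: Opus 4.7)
The plan is to expand the Maurer-Cartan equation $\sum_{k \geq 1} \tfrac{1}{k!}\, l_k(\theta, \ldots, \theta) = 0$ in the direct sum $(s^{-1}\mathfrak{q} \oplus \mathfrak{a})_0$ and show it decomposes into two independent conditions, one on each summand, matching exactly the Hom-Lie/representation axioms and the embedding-tensor axiom. Writing $\theta = (X, Y)$ with $X = s^{-1}(\mu \boxplus \rho)$ and $Y = T$, and noting that $l_1 = 0$ (since $\Delta = 0$ in our $V$-data), multilinearity expands each $l_k(\theta,\ldots,\theta)$ as a sum over length-$k$ sequences whose entries are either $(X,0)$ or $(0,Y)$. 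Inspecting the explicit formulas for the $l_k$, only two input patterns survive: $l_2((X,0),(X,0))$, and, for every $k \geq 2$, $l_k$ applied to one $(X,0)$ and $k-1$ copies of $(0,Y)$. All other patterns vanish, either because $\Delta = 0$ kills terms with all inputs in $\mathfrak{a}$, or because no structure map exists with two or more $(X,0)$ inputs beyond $l_2$.

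The $s^{-1}\mathfrak{q}$-summand receives only $\tfrac{1}{2!}\, l_2((X,0),(X,0)) = -\tfrac12 s^{-1}[\mu \boxplus \rho,\, \mu \boxplus \rho]_B$, so its vanishing is $[\mu \boxplus \rho, \mu \boxplus \rho]_B = 0$, i.e.\ $\mu \boxplus \rho$ defines a Hom-Leibniz bracket on $\mathfrak{g} \oplus V$. A routine decomposition of inputs into $\mathfrak{g}$- and $V$-parts then reads off the Hom-Leibniz identity term by term: the $\mathfrak{g}$-component recovers (using the skewsymmetry built into $\mu \in \mathrm{Hom}(\wedge^2 \mathfrak{g}_\alpha, \mathfrak{g}_\alpha)$) the Hom-Jacobi identity for $\mu$, while the $V$-component recovers the representation axiom $\rho(\alpha(x))\rho(y) - \rho(\alpha(y))\rho(x) = \rho(\mu(x,y))\beta$ for $\rho$. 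Hence the $s^{-1}\mathfrak{q}$-summand of the MC equation is exactly the requirement that $(\mathfrak{g}^\mu_\alpha, V^\rho_\beta)$ is a Hom-Lie algebra equipped with a representation.

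For the $\mathfrak{a}$-summand, graded symmetry (all entries sit in $L_\infty$-degree $0$) implies that the $k$ placements of $(X,0)$ among $k-1$ copies of $(0,Y)$ contribute equally, so the $k$-th term equals $\tfrac{1}{(k-1)!} P\bigl[\cdots[[\mu\boxplus\rho, T]_B, T]_B, \ldots, T\bigr]_B$ with $k-1$ iterated Balavoine brackets against $T$. Three direct evaluations close the argument: first, $P[\mu\boxplus\rho, T]_B = 0$, because $(\mu\boxplus\rho)\diamond T$ restricted to $V\otimes V$-inputs lands in the $V$-summand while $T\diamond(\mu\boxplus\rho)$ restricted to $V\otimes V$-inputs vanishes; second, using the explicit derived-bracket formula (\ref{exp-deri-br}) one finds $\tfrac12 P\bigl[[\mu\boxplus\rho, T]_B, T\bigr]_B = \tfrac12\llbracket T, T\rrbracket$, identified with the bilinear map $(u,v) \mapsto [T(u), T(v)] - T(\rho(Tu)v)$; and third, the key truncation $\bigl[[[\mu\boxplus\rho, T]_B, T]_B,\, T\bigr]_B = 0$ holds because the map $R := [[\mu\boxplus\rho, T]_B, T]_B$ depends only on the $V$-parts of its inputs and its image lies in the $\mathfrak{g}$-summand, so in $R \diamond T$ each plugged argument $T(\xi)$ has vanishing $V$-component, while $T \diamond R$ applies $T$ (which only sees $V$-parts) to a $\mathfrak{g}$-valued vector. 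By induction all deeper iterated brackets also vanish, so the $\mathfrak{a}$-summand collapses to $\tfrac12 \llbracket T,T\rrbracket = 0$, which by Theorem~\ref{embd-mc} is precisely the embedding-tensor condition for $T$. Combining both summands yields the claimed equivalence. The main obstacle is the technical bookkeeping of the twists $\alpha,\beta$ and signs across iterated Balavoine brackets, especially the vanishing that truncates the a priori infinite Maurer-Cartan sum to its first two nontrivial terms.
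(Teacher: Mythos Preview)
Your proof is correct and follows essentially the same approach as the paper: expand the Maurer-Cartan sum, identify that only the $l_2$ and $l_3$ contributions survive, and match the two resulting components to the Hom-Lie/representation axioms and the embedding-tensor condition via Theorem~\ref{embd-mc}. You are in fact more thorough than the paper, which simply writes down $\tfrac{1}{2!}l_2(\theta,\theta)+\tfrac{1}{3!}l_3(\theta,\theta,\theta)$ without justification; your explicit verification that $P[\mu\boxplus\rho,T]_B=0$ and that $[[\,[\mu\boxplus\rho,T]_B,T]_B,T]_B=0$ (equivalently, $[[\mu\boxplus\rho,T]_B,T]_B\in\mathfrak{a}$ and $\mathfrak{a}$ is abelian) supplies the missing details behind that truncation.
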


\begin{proof}
Note that 
\begin{align*}
    &\sum_{k=1}^\infty \frac{1}{k!} ~ l_k \big( (s^{-1} (\mu \boxplus \rho), T), \ldots, (s^{-1} (\mu \boxplus \rho), T)   \big)   \\
    &= \frac{1}{2!} ~l_2 \big(  (s^{-1} (\mu \boxplus \rho), T), (s^{-1} (\mu \boxplus \rho), T)  \big) + \frac{1}{3!} ~ l_3 \big(  (s^{-1} (\mu \boxplus \rho), T), (s^{-1} (\mu \boxplus \rho), T), (s^{-1} (\mu \boxplus \rho), T) \big) \\
    &= \big(  -s^{-1} \frac{1}{2} [  \mu \boxplus \rho, \mu \boxplus \rho   ]_B , ~ \frac{1}{2} [[ \mu \boxplus \rho, T]_B, T]_B  \big).
\end{align*}
This shows that $\theta = (s^{-1} (\mu \boxplus \rho), T)$ is a Maurer-Cartan element of the $L_\infty$-algebra if and only if
\begin{align*}
     [  \mu \boxplus \rho, \mu \boxplus \rho   ]_B  = 0 ~~~~ \text{ and } ~~~~  [[ \mu \boxplus \rho, T]_B, T]_B  = 0.
\end{align*}
The condition $[\mu \boxplus \rho, \mu \boxplus \rho ]_B  = 0$ is equivalent to the fact that $\mathfrak{g}_\alpha^\mu$ is a Hom-Lie algebra and $V_\beta^\rho$ is a representation of it. The second condition $ [[ \mu \boxplus \rho, T]_B, T]_B  = 0$ is equivalent that $T$ is an embedding tensor on the Hom-Lie algebra $\mathfrak{g}_\alpha^\mu$ with respect to the representation $V_\beta^\rho$ (cf. Theorem \ref{embd-mc}). Combining both the above facts, we get the result.
\end{proof}

In the next section, we will use the above Maurer-Cartan characterization of a Hom-Lie-Leibniz triple to define the cohomology. We will also see an application of the cohomology.

Let $(\mathfrak{g}^{[~,~]}_\alpha , V^\rho_\beta, T)$ be a Hom-Lie-Leibniz triple. Then it follows from the previous theorem that $\theta = (s^{-1} (\mu \boxplus \rho), T)$ is a Maurer-Cartan element of the $L_\infty$-algebra $(s^{-1}\mathfrak{q} \oplus \mathfrak{a}, \{ l_k \}_{k \geq 1})$, where $\mu = [~,~]$. Hence one can construct the $L_\infty$-algebra $\big( s^{-1}\mathfrak{q} \oplus \mathfrak{a}, \{ l_k^{(s^{-1}(\mu \boxplus \rho), T)} \}_{k \geq 1} \big)$ twisted by the Maurer-Cartan element $\theta = (s^{-1} (\mu \boxplus \rho), T)$. This $L_\infty$-algebra is called the controlling algebra of the given Hom-Lie-Leibniz triple $(\mathfrak{g}^{[~,~]}_\alpha , V^\rho_\beta, T)$. The justification is given by the following result.

\begin{thm}
Let $(\mathfrak{g}^{[~,~]}_\alpha , V^\rho_\beta, T)$ be a Hom-Lie-Leibniz triple. Then for any linear maps
\begin{align*}
\mu' \in \mathrm{Hom}(\wedge^2 \mathfrak{g}_\alpha, \mathfrak{g}_\alpha), ~~~ \rho' \in \mathrm{Hom}(\mathfrak{g}_\alpha \otimes V_\beta, V_\beta) ~~~ \text{ and } ~~~ T' \in \mathrm{Hom}(V_\beta, \mathfrak{g}_\alpha),
\end{align*}
the triple $(\mathfrak{g}_\alpha^{[~,~] + [~,~]'}, V_\beta^{\rho + \rho'}, T+ T')$ is a Hom-Lie-Leibniz triple if and only if $\theta' = (s^{-1} (\mu' \boxplus \rho'), T')$ is a Maurer-Cartan element of the controlling $L_\infty$-algebra $\big( s^{-1}\mathfrak{q} \oplus \mathfrak{a}, \{ l_k^{(s^{-1}(\mu \boxplus \rho), T)} \}_{k \geq 1} \big)$.
\end{thm}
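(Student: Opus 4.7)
The plan is to reduce this statement to the Maurer-Cartan characterization established in the previous theorem, combined with the general fact about twisting an $L_\infty$-algebra by a Maurer-Cartan element that is recorded earlier in the paper.

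First, I would apply the previous theorem directly to the sum $(\mathfrak{g}_\alpha^{[~,~]+[~,~]'}, V_\beta^{\rho+\rho'}, T+T')$. That result says the triple in question is a Hom-Lie-Leibniz triple if and only if the element
\begin{align*}
\big( s^{-1} ((\mu+\mu') \boxplus (\rho+\rho')),\, T+T' \big) \in (s^{-1} \mathfrak{q} \oplus \mathfrak{a})_0
\end{align*}
is a Maurer-Cartan element of the untwisted $L_\infty$-algebra $(s^{-1}\mathfrak{q} \oplus \mathfrak{a}, \{ l_k \}_{k \geq 1})$, where $\mu = [~,~]$ and $\mu' = [~,~]'$. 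Here one should note the straightforward linearity identity $(\mu+\mu') \boxplus (\rho+\rho') = \mu \boxplus \rho + \mu' \boxplus \rho'$, which follows immediately from the componentwise definition of $\boxplus$. Hence the element above is precisely $\theta + \theta'$, where $\theta = (s^{-1}(\mu \boxplus \rho), T)$.

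Next I would invoke the general twisting principle recalled in the paper: for any $L_\infty$-algebra $(L, \{l_k\}_{k \geq 1})$ and any Maurer-Cartan element $\theta \in L_0$, a degree zero element $\theta' \in L_0$ satisfies that $\theta + \theta'$ is a Maurer-Cartan element of $(L, \{l_k\}_{k \geq 1})$ if and only if $\theta'$ is a Maurer-Cartan element of the twisted $L_\infty$-algebra $(L, \{ l_k^\theta \}_{k \geq 1})$. Since we are already assuming that $(\mathfrak{g}_\alpha^{[~,~]}, V_\beta^\rho, T)$ is a Hom-Lie-Leibniz triple, the previous theorem guarantees that $\theta = (s^{-1}(\mu \boxplus \rho), T)$ is a Maurer-Cartan element of $(s^{-1}\mathfrak{q} \oplus \mathfrak{a}, \{ l_k \}_{k \geq 1})$, so the twisting principle is applicable.

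Combining the two steps, the triple $(\mathfrak{g}_\alpha^{[~,~]+[~,~]'}, V_\beta^{\rho+\rho'}, T+T')$ is a Hom-Lie-Leibniz triple if and only if $\theta'= (s^{-1}(\mu' \boxplus \rho'), T')$ is a Maurer-Cartan element of the controlling $L_\infty$-algebra $\big( s^{-1}\mathfrak{q} \oplus \mathfrak{a}, \{ l_k^{(s^{-1}(\mu \boxplus \rho), T)} \}_{k \geq 1} \big)$. There is no real obstacle in this argument; the only thing to be careful about is bookkeeping with the shift $s^{-1}$ and the additivity of the $\boxplus$ construction, and verifying that $T + T'$ and $\mu + \mu'$ and $\rho + \rho'$ still commute with the corresponding twist maps $\alpha, \beta$, which is immediate by linearity since both summands do so by hypothesis.
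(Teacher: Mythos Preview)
Your proposal is correct and follows exactly the intended approach: the paper does not supply an explicit proof of this theorem, but the argument it has in mind is precisely the one you give, namely combining the Maurer-Cartan characterization of Hom-Lie-Leibniz triples from the preceding theorem with the general twisting principle for $L_\infty$-algebras recalled just after Definition~\ref{l-inf-defn}. The additivity of $\boxplus$ and the compatibility of the summed maps with $\alpha,\beta$ are indeed trivial, so there is nothing more to check.
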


\section{Cohomology and deformations of Hom-Lie-Leibniz triples}\label{sec6}
In this section, we first define the cohomology of a Hom-Lie-Leibniz triple. Given a Hom-Lie-Leibniz triple, we show that there is a long exact sequence connecting various cohomology groups. Finally, as an application of our cohomology, we study infinitesimal deformations of a Hom-Lie-Leibniz triple.

Let $(\mathfrak{g}_\alpha^{[~,~]}, V_\beta^\rho)$ be a pair consisting of a Hom-Lie algebra $\mathfrak{g}_\alpha^{[~,~]}$ and a representation $V_\beta^\rho$. We call such a pair a Hom-LieRep pair. Thus, a Hom-Lie-Leibniz triple $(\mathfrak{g}_\alpha^{[~,~]}, V_\beta^\rho, T)$ is a Hom-LieRep pair $(\mathfrak{g}_\alpha^{[~,~]}, V_\beta^\rho)$ equipped with an embedding tensor $T$. Here we first construct a cohomology theory associated with a Hom-LieRep pair. We will see that the cohomology of the Hom-Lie-Leibniz triple $(\mathfrak{g}_\alpha^{[~,~]}, V_\beta^\rho, T)$ can be seen as a byproduct of this cohomology with the cohomology of the embedding tensor $T$ (as defined in Section \ref{sec3}).

Let $(\mathfrak{g}_\alpha^{[~,~]}, V_\beta^\rho)$ be a Hom-LieRep pair. For each $n \geq 0$, we define the space $C^n_\mathrm{HLR} (\mathfrak{g}^{[~,~]}_\alpha, V^\rho_\beta)$ of $n$-cochains by
\begin{align*}
    C^0_\mathrm{HLR} ( \mathfrak{g}^{[~,~]}_\alpha, V^\rho_\beta ) = 0 \quad \text{ and } \quad  C^{n \geq 1}_\mathrm{HLR} ( \mathfrak{g}^{[~,~]}_\alpha, V^\rho_\beta ) = \mathrm{Hom} (\wedge^n \mathfrak{g}_\alpha, \mathfrak{g}_\alpha) \oplus \mathrm{Hom} (\wedge^{n-1} \mathfrak{g}_\alpha \otimes V_\beta , V_\beta).
\end{align*}
In other words, $C^{n \geq 1}_\mathrm{HLR} ( \mathfrak{g}^{[~,~]}_\alpha, V^\rho_\beta ) = \mathfrak{q}_{n-1}$ (see (\ref{defn-q})). We define a map $\delta_\mathrm{HLR} : C^n_\mathrm{HLR} ( \mathfrak{g}^{[~,~]}_\alpha, V^\rho_\beta ) \rightarrow C^{n+1}_\mathrm{HLR} ( \mathfrak{g}^{[~,~]}_\alpha, V^\rho_\beta)$ by
\begin{align}
    \delta_\mathrm{HLR} ((f_\mathfrak{g} , f_V)) = (-1)^{n-1} [\pi, f_\mathfrak{g} + f_V ]_B, \text{ for } (f_\mathfrak{g} , f_V) \in C^n_\mathrm{HLR} ( \mathfrak{g}^{[~,~]}_\alpha, V^\rho_\beta ),
\end{align}
where $\pi$ is given in (\ref{defn-pi}). The explicit description of the map $\delta_\mathrm{HLR}$ is given by 
\begin{align*}
\delta_\mathrm{HLR}  ((f_\mathfrak{g} , f_V)) = (\delta (f_\mathfrak{g}) , \delta^{f_\mathfrak{g}} (f_V)),
\end{align*}
 where the components $\delta (f_\mathfrak{g})$ and $\delta^{f_\mathfrak{g}} (f_V) $ are respectively given by
\begin{align*}
    (\delta  f_\mathfrak{g}) (x_1, \ldots, x_{n+1}) =~& \sum_{i=1}^{n+1} (-1)^{i+1} [\alpha^{n-1} (x_i), f_\mathfrak{g} (x_1, \ldots, \widehat{x_i}, \ldots, x_{n+1})] \\
    &+ \sum_{1 \leq i < j \leq n+1} (-1)^{i+j} f_\mathfrak{g} \big( [x_i, x_j], \alpha(x_1), \ldots, \widehat{\alpha (x_i)}, \ldots, \widehat{\alpha(x_j)}, \ldots, \alpha (x_{n+1})   \big),
\end{align*}
\begin{align*}
 ( \delta^{f_\mathfrak{g}} (f_V))   (x_1, \ldots, x_n, v) =~& \sum_{i=1}^n (-1)^{i+1} \rho (\alpha^{n-1} (x_i)) f_V (x_1, \ldots, \widehat{x_i}, \ldots, x_n, v) \\
 &+ (-1)^{n-1} \rho (f_\mathfrak{g} (x_1, \ldots, x_n)) (\beta^{n-1}(v)) \\
    &+ \sum_{1 \leq i < j \leq n} (-1)^{i+j} f_V \big(  [x_i, x_j], \ldots, \widehat{\alpha (x_i)}, \ldots, \widehat{\alpha (x_j)}, \ldots, v  \big) \\
    &+  \sum_{i=1}^n (-1)^i f_V (\alpha (x_1), \ldots, \widehat{\alpha(x_i)}, \ldots, \alpha (x_n), \rho (x_i) v),
\end{align*}
for $x_1, \ldots, x_n \in \mathfrak{g}$ and $v \in V$. Since $[\pi, \pi]_B = 0$, it follows that $(\delta_\mathrm{HLR})^2 = 0$. That is, $\{ C^\bullet_\mathrm{HLR} (\mathfrak{g}^{[~,~]}_\alpha, V^\rho_\beta), \delta_\mathrm{HLR} \}$ is a cochain complex. The corresponding cohomology is called the {\bf cohomology} of the Hom-LieRep pair $(\mathfrak{g}_\alpha^{[~,~]}, V_\beta^\rho)$, and denoted by $H^\bullet_\mathrm{HLR} (\mathfrak{g}^{[~,~]}_\alpha, V^\rho_\beta).$

\medskip

We are now in a position to define the cohomology of a Hom-Lie-Leibniz triple $(\mathfrak{g}_\alpha^{[~,~]}, V_\beta^\rho, T)$. We define the $n$-th cochain group $C^n_\mathrm{HLLT} (\mathfrak{g}_\alpha^{[~,~]}, V_\beta^\rho, T)$ by
\begin{align*}
    C^n_\mathrm{HLLT} (\mathfrak{g}_\alpha^{[~,~]}, V_\beta^\rho, T) = \begin{cases}
        0 & \text{ if } n = 0,\\
        \mathrm{Hom}(\mathfrak{g}_\alpha, \mathfrak{g}_\alpha) \oplus \mathrm{Hom} (V_\beta, V_\beta) & \text{ if } n =1,\\
        C^n_\mathrm{HLR} (\mathfrak{g}_\alpha, V_\beta) \oplus C^{n-1}_T (V_\beta, \mathfrak{g}_\alpha) & \text{ if } n \geq 2 \\
         \quad = \mathrm{Hom}(\wedge^n \mathfrak{g}_\alpha, \mathfrak{g}_\alpha) \oplus \mathrm{Hom}(\wedge^{n-1} \mathfrak{g}_\alpha \otimes V_\beta, V_\beta) \oplus \mathrm{Hom}(V_\beta^{\otimes n-1}, \mathfrak{g}_\alpha).
    \end{cases}
\end{align*}
Note that an element $(f_\mathfrak{g}, f_V) \in \mathrm{Hom} (\mathfrak{g}_\alpha, \mathfrak{g}_\alpha) \oplus \mathrm{Hom}(V_\beta, V_\beta) = C^1_\mathrm{HLLT} (\mathfrak{g}_\alpha^{[~,~]}, V_\beta^\rho, T)$ gives rise to an element $(s^{-1} (f_\mathfrak{g} + f_V), 0) \in (s^{-1} \mathfrak{q} \oplus \mathfrak{a})_{-1}$. Here we assume that $\mathfrak{a}_{-1} = 0$. On the other hand, for $n \geq 2$, the space $C^n_\mathrm{HLLT} (\mathfrak{g}_\alpha^{[~,~]}, V_\beta^\rho, T)$ is isomorphic to $(s^{-1} \mathfrak{q} \oplus \mathfrak{a})_{n-2}$ via
\begin{align*}
     \mathrm{Hom}(\wedge^n \mathfrak{g}_\alpha, \mathfrak{g}_\alpha) \oplus \mathrm{Hom}(\wedge^{n-1} \mathfrak{g}_\alpha \otimes V_\beta, V_\beta) \oplus \mathrm{Hom}(V_\beta^{\otimes n-1}, \mathfrak{g}_\alpha) \ni ~  
 & (f_\mathfrak{g}, f_V, P) \\
     \leftrightsquigarrow & (s^{-1} (f_\mathfrak{g} + f_V), P) \in (s^{-1} \mathfrak{q} \oplus \mathfrak{a})_{n-2}.
\end{align*}
We define a map $\delta_\mathrm{HLLT} : C^n_\mathrm{HLLT} (\mathfrak{g}_\alpha^{[~,~]}, V_\beta^\rho, T) \rightarrow C^{n+1}_\mathrm{HLLT} (\mathfrak{g}_\alpha^{[~,~]}, V_\beta^\rho, T)$ by
\begin{align*}
\delta_\mathrm{HLLT} ((f_\mathfrak{g}, f_V) ) =~& - l_1^{(s^{-1} \pi, T)} (s^{-1} (f_\mathfrak{g}+ f_V), 0), \text{ for } n = 1,\\
\delta_\mathrm{HLLT} ((f_\mathfrak{g}, f_V, P) ) =~& (-1)^{n-2} l_1^{(s^{-1} \pi, T)} (s^{-1} (f_\mathfrak{g}+ f_V), P), \text{ for } n \geq 2.  
\end{align*}
Since $\big(  s^{-1} \mathfrak{q} \oplus \mathfrak{a} , \{ l_k^{    (s^{-1} \pi, T)} \}_{k \geq 1} \big)$ is an $L_\infty$-algebra (namely the $L_\infty$-algebra twisted by the Maurer-Cartan element $\theta = (s^{-1} \pi, T)$), it follows that $(l_1^{(s^{-1} \pi, T)})^2 = 0$. Therefore, $\{ C^\bullet_\mathrm{HLLT} (\mathfrak{g}_\alpha^{[~,~]}, V_\beta^\rho, T), \delta_\mathrm{HLLT} \}$ is a cochain complex. The corresponding cohomology is called the {\bf cohomology} of the Hom-Lie-Leibniz triple $(\mathfrak{g}_\alpha^{[~,~]}, V_\beta^\rho, T)$. It is denoted by $H^\bullet_\mathrm{HLLT} (\mathfrak{g}_\alpha^{[~,~]}, V_\beta^\rho, T).$

The coboundary map $\delta_\mathrm{HLLT}$ is explicitly given by
\begin{align*}
    \delta_\mathrm{HLLT} ((f_\mathfrak{g},f_V, P)) = (\delta (f_\mathfrak{g}), \delta^{f_\mathfrak{g}} (f_V),  (-1)^n d_T (P) + \Omega_T(f_\mathfrak{g}, f_V)),
\end{align*}
for $(f_\mathfrak{g}, f_V, P) \in \mathrm{Hom}(\wedge^n \mathfrak{g}_\alpha, \mathfrak{g}_\alpha) \oplus \mathrm{Hom}(\wedge^{n-1} \mathfrak{g}_\alpha \otimes V_\beta, V_\beta) \oplus \mathrm{Hom}(V_\beta^{\otimes n-1}, \mathfrak{g}_\alpha) = C^n_\mathrm{HLLT} ( \mathfrak{g}_\alpha^{[~,~]}, V_\beta^\rho, T )$. Here $\Omega_T : \mathrm{Hom}(\wedge^n \mathfrak{g}_\alpha, \mathfrak{g}_\alpha) \oplus \mathrm{Hom}(\wedge^{n-1} \mathfrak{g}_\alpha \otimes V_\beta, V_\beta) \rightarrow C^n_T (V_\beta, \mathfrak{g}_\alpha)$ is the map given by
\begin{align*}
    \Omega_T (f_\mathfrak{g}, f_V) (v_1, \ldots, v_n) = (-1)^n \big(  f_\mathfrak{g} (Tv_1, \ldots, Tv_n) - Tf_V (Tv_1, \ldots, Tv_{n-1}, v_n) \big), \text{ for } v_1, \ldots, v_n \in V.
\end{align*}



The cohomology of the complex 
$\{ C^\bullet_\mathrm{HLLT} (\mathfrak{g}_\alpha^{[~,~]}, V_\beta^\rho, T) , \delta_\mathrm{HLLT} \}$
is called the {\bf cohomology} of the Hom-Lie-Leibniz triple $(\mathfrak{g}_\alpha^{[~,~]}, V_\beta^\rho, T)$ and denoted by $H^\bullet_\mathrm{HLLT} (\mathfrak{g}_\alpha^{[~,~]}, V_\beta^\rho, T).$

\medskip

Let $(\mathfrak{g}_\alpha^{[~,~]}, V_\beta^\rho, T)$ be a Hom-Lie-Leibniz triple. Then one may consider the following cohomologies: (i) the cohomology of the embedding tensor $T$, (ii) the cohomology of the Hom-LieRep pair $(\mathfrak{g}_\alpha^{[~,~]}, V_\beta^\rho)$, (iii) the cohomology of the whole Hom-Lie-Leibniz triple $(\mathfrak{g}_\alpha^{[~,~]}, V_\beta^\rho, T)$. In the following result, we relate these cohomology groups by a suitable long exact sequence.

\begin{thm}
    Let $(\mathfrak{g}_\alpha^{[~,~]}, V_\beta^\rho, T)$ be a Hom-Lie-Leibniz triple. Then there is a long exact sequence connecting various cohomology groups
    \begin{align*}
        \cdots \rightarrow H^{n-1}_T (V_\beta, \mathfrak{g}_\alpha) \rightarrow H^n_\mathrm{HLLT} (\mathfrak{g}_\alpha^{[~,~]}, V_\beta^\rho, T) \rightarrow H^n_\mathrm{HLR} (\mathfrak{g}_\alpha^{[~,~]}, V_\beta^\rho) \rightarrow H^{n}_T (V_\beta, \mathfrak{g}_\alpha) \rightarrow \cdots .
    \end{align*}
\end{thm}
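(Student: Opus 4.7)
The plan is to realise the required long exact sequence as the one coming from a short exact sequence of cochain complexes
\begin{equation*}
    0 \longrightarrow C^{\bullet-1}_T(V_\beta, \mathfrak{g}_\alpha) \xrightarrow{\iota} C^\bullet_\mathrm{HLLT}(\mathfrak{g}_\alpha^{[~,~]}, V_\beta^\rho, T) \xrightarrow{p} C^\bullet_\mathrm{HLR}(\mathfrak{g}_\alpha^{[~,~]}, V_\beta^\rho) \longrightarrow 0,
\end{equation*}
where $\iota$ is the inclusion $P \mapsto (0, 0, P)$ and $p$ is the projection $(f_\mathfrak{g}, f_V, P) \mapsto (f_\mathfrak{g}, f_V)$. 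By the very definition of $C^n_\mathrm{HLLT}$ as the direct sum $C^n_\mathrm{HLR} \oplus C^{n-1}_T$ (for $n \geq 2$, with obvious adjustments in low degree), the sequence is termwise exact.

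The first step is to check that $p$ and $\iota$ are chain maps. From the explicit formula
$\delta_\mathrm{HLLT}(f_\mathfrak{g}, f_V, P) = (\delta f_\mathfrak{g},\, \delta^{f_\mathfrak{g}} f_V,\, (-1)^n d_T P + \Omega_T(f_\mathfrak{g}, f_V))$
the first two components agree with $\delta_\mathrm{HLR}(f_\mathfrak{g}, f_V)$, so $p \circ \delta_\mathrm{HLLT} = \delta_\mathrm{HLR} \circ p$ is immediate. For $\iota$, observe that $\Omega_T(0,0) = 0$, hence $\delta_\mathrm{HLLT}(0, 0, P) = (0, 0, (-1)^n d_T P)$; equipping the shifted subcomplex $C^{\bullet-1}_T$ with the differential $(-1)^n d_T$ (whose cohomology coincides with that of $d_T$) makes $\iota$ into a chain map.

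The second step is to apply the classical zigzag lemma to extract the long exact sequence in cohomology; after the reindexing $H^n(C^{\bullet-1}_T) \cong H^{n-1}_T(V_\beta, \mathfrak{g}_\alpha)$ this is exactly the stated sequence. The connecting homomorphism $\partial$ is identified by the standard diagram chase: for a cocycle $(f_\mathfrak{g}, f_V) \in C^n_\mathrm{HLR}$ lifted to $(f_\mathfrak{g}, f_V, 0) \in C^n_\mathrm{HLLT}$, the image $\delta_\mathrm{HLLT}(f_\mathfrak{g}, f_V, 0) = (0, 0, \Omega_T(f_\mathfrak{g}, f_V))$ lies in the image of $\iota$, so $\partial$ is induced (up to a sign) by $\Omega_T$. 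The only real difficulty is bookkeeping of the signs carried by the factors $(-1)^n$ in $\delta_\mathrm{HLLT}$ and handling the low-degree terms where the cochain groups $C^0_\mathrm{HLLT}$, $C^0_\mathrm{HLR}$ and $C^0_T$ are defined with slightly different conventions; these are routine checks that do not alter the exactness of the resulting sequence.
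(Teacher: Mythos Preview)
Your proposal is correct and follows essentially the same approach as the paper: both obtain the long exact sequence from the short exact sequence of cochain complexes $0 \to C^{\bullet-1}_T \xrightarrow{\iota} C^\bullet_\mathrm{HLLT} \xrightarrow{p} C^\bullet_\mathrm{HLR} \to 0$ via the standard zigzag lemma. In fact you supply more detail than the paper's terse argument, since you verify explicitly that $\iota$ and $p$ are chain maps and identify the connecting homomorphism with $\Omega_T$.
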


\begin{proof}
Consider the cochain complexes

 (i) the complex $\{ C^\bullet_T (V_\beta, \mathfrak{g}_\alpha), d_T \}$ of the embedding tensor $T$, 
 
 (ii) the complex $\{ C^\bullet_\mathrm{HLR} (\mathfrak{g}^{[~,~]}, V_\beta^\rho), \delta_\mathrm{HLR} \}$ of the Hom-LieRep pair $(\mathfrak{g}^{[~,~]}, V_\beta^\rho)$,
 
 (iii) the complex $\{ C^\bullet_\mathrm{HLLT} (\mathfrak{g}^{[~,~]}, V_\beta^\rho, T), \delta_\mathrm{HLLT} \}$ of the Hom-Lie-Leibniz triple $ (\mathfrak{g}^{[~,~]}, V_\beta^\rho, T)$. \\
 Then there is a short exact sequence of cochain complexes
\begin{align*}
    0 \rightarrow C^{\bullet -1}_T (V_\beta, \mathfrak{g}_\alpha) \xrightarrow{i} \equalto{C^\bullet_\mathrm{HLLT}  (\mathfrak{g}^{[~,~]}, V_\beta^\rho, T)}{ C^\bullet_\mathrm{HLR}  (\mathfrak{g}^{[~,~]}, V_\beta^\rho) \oplus   C^{\bullet -1}_T (V_\beta, \mathfrak{g}_\alpha) } \xrightarrow{p} C^\bullet_\mathrm{HLR}  (\mathfrak{g}^{[~,~]}, V_\beta^\rho) \rightarrow 0.
\end{align*}
As a consequence, we get the desired result.
\end{proof}


\medskip

In the following, we introduce representations of a Hom-Lie-Leibniz triple. We also generalize the above cohomology of a Hom-Lie-Leibniz triple in the presence of an arbitrary representation. 

\begin{defn}
    Let $(\mathfrak{g}_\alpha^{[~,~]}, V_\beta^\rho, T)$ be a Hom-Lie-Leibniz triple. A {\bf representation} of $(\mathfrak{g}_\alpha^{[~,~]}, V_\beta^\rho, T)$ is given by a quadruple $(\mathfrak{h}^\varrho_\gamma, W_\eta^\vartheta, S, \Theta)$ consisting of two representations $\mathfrak{h}^\varrho_\gamma$ and $W_\eta^\vartheta$ of the Hom-Lie algebra $\mathfrak{g}_\alpha^{[~,~]}$, a linear map $S \in \mathrm{Hom} (W_\eta, \mathfrak{h}_\gamma)$ and a map $\Theta  : V \rightarrow \mathrm{Hom} (\mathfrak{h}, W)$ (called the pairing map) satisfying
    \begin{align*}
        \Theta ( \beta (v) ) (\gamma (h)) =~& \eta \big(   \Theta (v)(h) \big), \\
        \Theta (\rho(x) v) \circ \gamma =~& \vartheta (\alpha (x) ) \circ \Theta (v) - \Theta( \beta(v) ) \circ \varrho(x),\\
        \varrho (T (v)) \circ S =~& S \circ \vartheta (T(v)), \text{ for } x \in \mathfrak{g}, v \in V \text{ and } h \in \mathfrak{h}.
    \end{align*}
\end{defn}

Let $(\mathfrak{g}_\alpha^{[~,~]}, V_\beta^\rho, T)$ be a Hom-Lie-Leibniz triple. Then it can be seen as a representation $(\mathfrak{g}_\alpha^{[~,~]}, V_\beta^\rho, T, \Theta_\mathrm{ad})$, where the pairing map $\Theta_\mathrm{ad} : V \rightarrow \mathrm{Hom} (\mathfrak{g}, V)$ is given by
\begin{align*}
   \Theta_\mathrm{ad} (v) (x) = - \rho (x) v, \text{ for } v \in V, x \in \mathfrak{g}.
\end{align*}
This is called the adjoint representation of the Hom-Lie-Leibniz triple $(\mathfrak{g}_\alpha^{[~,~]}, V_\beta^\rho, T)$.

The usual semidirect product construction fits perfectly with Hom-Lie-Leibniz triples. More precisely, we have the following result.

\begin{prop}
    Let $(\mathfrak{g}_\alpha^{[~,~]}, V_\beta^\rho, T)$ be a Hom-Lie-Leibniz triple and $(\mathfrak{h}^\varrho_\gamma, W_\eta^\vartheta, S, \Theta)$ be a representation of it. Then the triple $\big(  (\mathfrak{g} \oplus \mathfrak{h})^{[~,~]_\ltimes}_{\alpha \oplus \gamma}  , (V \oplus W)_{\beta \oplus \eta}^{\rho_\ltimes} , T \oplus S \big)$ is a Hom-Lie-Leibniz triple, where
    \begin{align*}
[(x, h), (y, k) ]_\ltimes :=~& ([x,y], \varrho(x) k - \varrho(y) h),\\
\rho_\ltimes ((x,h)) (v, w) :=~& (\rho(x)v , \vartheta (x) w - \Theta (v) h),\\
( T \oplus S) ((v,w)) :=~& (Tv, Sw),
    \end{align*}
for $(x, h) , (y, k) \in \mathfrak{g} \oplus \mathfrak{h}$ and $(v, w) \in V \oplus W$. This is called the semidirect product.
\end{prop}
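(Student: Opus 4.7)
The plan is to verify three things in succession: (i) that $(\mathfrak{g}\oplus\mathfrak{h},[~,~]_\ltimes,\alpha\oplus\gamma)$ is a Hom-Lie algebra, (ii) that $(V\oplus W)^{\rho_\ltimes}_{\beta\oplus\eta}$ is a representation of it, and (iii) that $T\oplus S$ satisfies the two defining identities of an embedding tensor with respect to this Hom-Lie algebra and representation. In each case, the direct-sum structure of the domain and codomain lets the required identities split componentwise into conditions that are either part of the input Hom-Lie-Leibniz triple $(\mathfrak{g}^{[~,~]}_\alpha,V^\rho_\beta,T)$ or part of the representation $(\mathfrak{h}^\varrho_\gamma,W^\vartheta_\eta,S,\Theta)$.

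For (i), skew-symmetry of $[~,~]_\ltimes$ is immediate. The multiplicativity condition splits, on the $\mathfrak{g}$-component, into $\alpha\circ[~,~]=[\alpha\cdot,\alpha\cdot]$ coming from $\mathfrak{g}^{[~,~]}_\alpha$, and, on the $\mathfrak{h}$-component, into the twist-compatibility $\gamma\circ\varrho(x)=\varrho(\alpha(x))\circ\gamma$ of the representation $\mathfrak{h}^\varrho_\gamma$. The Hom-Jacobi identity for $[~,~]_\ltimes$ reduces on the $\mathfrak{g}$-side to the Hom-Jacobi identity of $\mathfrak{g}$, and on the $\mathfrak{h}$-side to a sum of three terms that, after rearrangement, is precisely the representation identity defining $\mathfrak{h}^\varrho_\gamma$.

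For (ii), the twist-compatibility $(\beta\oplus\eta)\circ\rho_\ltimes((x,h))=\rho_\ltimes((\alpha\oplus\gamma)(x,h))\circ(\beta\oplus\eta)$ unpacks into the twist-compatibilities of $\rho$ and of $\vartheta$, together with the first pairing axiom $\Theta(\beta v)(\gamma h)=\eta(\Theta(v)h)$. The main representation identity for $\rho_\ltimes$, applied to an element $(v,w)$, splits: the $V$-component returns the representation identity for $V^\rho_\beta$ (the $\mathfrak{h}$-inputs contributing nothing there), and the $W$-component decomposes into the representation identity for $W^\vartheta_\eta$ plus cross-terms that match exactly the second pairing axiom $\Theta(\rho(x)v)\circ\gamma=\vartheta(\alpha(x))\circ\Theta(v)-\Theta(\beta v)\circ\varrho(x)$.

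For (iii), the identity $(\alpha\oplus\gamma)\circ(T\oplus S)=(T\oplus S)\circ(\beta\oplus\eta)$ is immediate componentwise, from $\alpha T=T\beta$ and from $S\in\Hom(W_\eta,\mathfrak{h}_\gamma)$. The bracket-action identity $[(T\oplus S)(v,w),(T\oplus S)(v',w')]_\ltimes=(T\oplus S)\bigl(\rho_\ltimes((T\oplus S)(v,w))(v',w')\bigr)$ splits into a $\mathfrak{g}$-component, which is precisely the embedding tensor identity for $T$, and an $\mathfrak{h}$-component. The $\mathfrak{h}$-component is where the real work lies: after applying the third pairing axiom $\varrho(Tv)\circ S=S\circ\vartheta(Tv)$ to the $\vartheta$-cross-term, the remaining pieces must cancel via a tight interplay between the embedding tensor identity for $T$ and the pairing axioms for $\Theta$ evaluated on the image of $S$. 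This collapse is the main obstacle of the verification; everything else is a routine unpacking. Once the $\mathfrak{h}$-component is confirmed, (iii) holds and the semidirect product is a Hom-Lie-Leibniz triple.
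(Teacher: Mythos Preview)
Your approach---a direct componentwise verification of (i) the Hom-Lie structure, (ii) the representation structure, and (iii) the embedding-tensor identity---is exactly what the paper intends; the paper's own proof reads in its entirety ``The proof follows from a straightforward calculation.'' Parts (i) and (ii) of your outline are correct, and the splitting you describe really does reduce everything to the axioms of the given Hom-Lie-Leibniz triple and its representation.

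The gap is in part (iii). After you use the third pairing axiom $\varrho(Tv)\circ S=S\circ\vartheta(Tv)$ to match the $\vartheta$-terms, the $\mathfrak h$-component of the embedding-tensor identity for $T\oplus S$ boils down to
\[
\varrho(Tv')(Sw)\;=\;S\bigl(\Theta(v')(Sw)\bigr)\qquad(v'\in V,\ w\in W),
\]
equivalently $S\circ\vartheta(Tv')=S\circ\Theta(v')\circ S$. You assert that ``the remaining pieces must cancel via a tight interplay between the embedding tensor identity for $T$ and the pairing axioms for $\Theta$,'' but you do not exhibit any such cancellation, and in fact the three axioms listed in the paper's definition of a representation do not obviously yield this identity. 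In the \emph{adjoint} representation it does hold, because
\[
T\bigl(\rho(Tv')w\bigr)=[Tv',Tw]=-[Tw,Tv']=-T\bigl(\rho(Tw)v'\bigr)=T\bigl(\Theta_{\mathrm{ad}}(v')(Tw)\bigr)
\]
via the embedding-tensor identity for $T$ and the skew-symmetry of $[\,\cdot\,,\cdot\,]$; but for a general $(\mathfrak h^\varrho_\gamma,W^\vartheta_\eta,S,\Theta)$ this is an independent compatibility between $S$, $\vartheta$ and $\Theta$. Since the paper leaves the computation to the reader, this is precisely the step you must write out explicitly rather than declare ``the main obstacle'' and move past---and you should be prepared to find that an additional axiom on $(S,\Theta)$ is tacitly required.
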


\begin{proof}
    The proof follows from a straightforward calculation.
\end{proof}

Let $(\mathfrak{g}_\alpha^{[~,~]}, V_\beta^\rho, T)$ be a Hom-Lie-Leibniz triple and $(\mathfrak{h}^\varrho_\gamma, W_\eta^\vartheta, S, \Theta)$ be a representation of it. For each $n \geq 0$, we define the space of $n$-cochains by
\begin{align*}
    C^n_\mathrm{HLLT} (\mathfrak{g}_\alpha^{[~,~]}, V_\beta^\rho, T ; \mathfrak{h}^\varrho_\gamma, W_\eta^\vartheta, S) = \begin{cases}
        0 \quad  \text{ if } n = 0, \\
        \mathrm{Hom} (\mathfrak{g}_\alpha, \mathfrak{h}_\gamma) \oplus \mathrm{Hom} (V_\beta, W_\eta) \quad  \text{ if } n =1,\\
        \mathrm{Hom} (\wedge^n \mathfrak{g}_\alpha, \mathfrak{h}_\gamma) \oplus \mathrm{Hom} (\wedge^{n-1} \mathfrak{g}_\alpha \otimes V_\beta, W_\eta) \oplus \mathrm{Hom} (V_\beta^{\otimes n-1} , \mathfrak{h}_\gamma) 
       ~~~ \text{if } n \geq 2.
    \end{cases}
\end{align*}
The coboundary map $\delta_\mathrm{HLLT} :  C^n_\mathrm{HLLT} (\mathfrak{g}_\alpha^{[~,~]}, V_\beta^\rho, T ; \mathfrak{h}^\varrho_\gamma, W_\eta^\vartheta, S) \rightarrow  C^{n+1}_\mathrm{HLLT} (\mathfrak{g}_\alpha^{[~,~]}, V_\beta^\rho, T ; \mathfrak{h}^\varrho_\gamma, W_\eta^\vartheta, S)$ is given by
\begin{align*}
    \delta_\mathrm{HLLT} ((f_\mathfrak{g}, f_V, P)) = \big(  \delta (f_\mathfrak{g}), \delta^{f_\mathfrak{g}} (f_V),   (-1)^n d_{T,S} (P) + \Omega_{T,S} (f_\mathfrak{g}, f_V) \big),
\end{align*}
for $(f_\mathfrak{g}, f_V, P) \in C^n_\mathrm{HLLT} (\mathfrak{g}_\alpha^{[~,~]}, V_\beta^\rho, T ; \mathfrak{h}^\varrho_\gamma, W_\eta^\vartheta, S)$.
Here the maps 
\begin{align*}
   & \delta : \mathrm{Hom} (\wedge^n \mathfrak{g}_\alpha , \mathfrak{h}_\gamma) \rightarrow \mathrm{Hom} (\wedge^{n+1} \mathfrak{g}_\alpha , \mathfrak{h}_\gamma),\\
   & \delta^{f_\mathfrak{g}} : \mathrm{Hom} (\wedge^{n-1} \mathfrak{g}_\alpha \otimes V_\beta, W_\eta) \rightarrow \mathrm{Hom} (\wedge^{n} \mathfrak{g}_\alpha \otimes V_\beta, W_\eta),\\
   & d_{T, S} : \mathrm{Hom} (V_\beta^{\otimes n} , \mathfrak{h}_\gamma) \rightarrow \mathrm{Hom} (V_\beta^{\otimes n+1} , \mathfrak{h}_\gamma)
\end{align*}
are respectively given by
\begin{align*}
    (\delta (f_\mathfrak{g})) (x_1, \ldots, x_{n+1}) =~& \sum_{i=1}^{n+1} (-1)^{i+1} \varrho(\alpha^{n-1} (x_i)) f_\mathfrak{g} (x_1, \ldots, \widehat{x_i}, \ldots, x_{n+1}) \\
   & + \sum_{1 \leq i < j \leq n+1} (-1)^{i+j} f_\mathfrak{g} \big( [x_i, x_j], \alpha(x_1), \ldots, \widehat{\alpha (x_i)}, \ldots, \widehat{\alpha(x_j)}, \ldots, \alpha (x_{n+1})   \big),
\end{align*}
\begin{align*}
    (\delta^{f_\mathfrak{g}} (f_V)) (x_1, \ldots, x_n, v) =~& \sum_{i=1}^n (-1)^{i+1} \vartheta (\alpha^{n-1} (x_i)) f_V (x_1, \ldots, \widehat{x_i}, \ldots, x_n, v) \\
    &+ (-1)^{n} \Theta(\beta^{n-1}(v)) f_\mathfrak{g} (x_1, \ldots, x_n)  \\
    &+ \sum_{1 \leq i < j \leq n} (-1)^{i+j} f_V \big(  [x_i, x_j], \ldots, \widehat{\alpha (x_i)}, \ldots, \widehat{\alpha (x_j)}, \ldots, v  \big) \\
    &+  \sum_{i=1}^n (-1)^i f_V (\alpha (x_1), \ldots, \widehat{\alpha(x_i)}, \ldots, \alpha (x_n), \rho (x_i) v),
\end{align*}
\begin{align*}
    d_{T,S} = \llbracket T+S, - \rrbracket, \qquad \qquad
\end{align*}
for $x_1, \ldots, x_{n+1} \in \mathfrak{g}$ and $v \in V$. Finally, for $v_1, \ldots, v_n \in V$,
\begin{align*}
    \big(   \Omega_{T,S} (f_\mathfrak{g}, f_V) \big) (v_1, \ldots, v_n ) = (-1)^n \big(  f_\mathfrak{g} (Tv_1, \ldots, Tv_n) - Sf_V (Tv_1, \ldots, Tv_{n-1}, v_n) \big).
\end{align*}

\begin{prop}
    With the above notations, we have $(\delta_\mathrm{HLLT})^2 = 0$. 
\end{prop}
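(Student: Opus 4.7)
The strategy is to reduce to the adjoint case, which is already settled by the twisted $L_\infty$-algebra argument: the unary bracket $l_1^{(s^{-1}\pi, T)}$ is a differential, which gives $\delta_\mathrm{HLLT}^2 = 0$ whenever we take coefficients in the adjoint representation of a Hom-Lie-Leibniz triple. The goal, then, is to realize the present complex as a subcomplex of an adjoint complex.

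The vehicle is the semidirect product. Form the Hom-Lie-Leibniz triple $(\tilde{\mathfrak g}, \tilde V, \tilde T) := \big((\mathfrak{g} \oplus \mathfrak{h})_{\alpha \oplus \gamma}^{[~,~]_\ltimes}, (V \oplus W)_{\beta \oplus \eta}^{\rho_\ltimes}, T \oplus S\big)$ from the previous proposition, and let $\tilde \delta_\mathrm{HLLT}$ be its adjoint coboundary, which by the already-established case satisfies $\tilde\delta_\mathrm{HLLT}^2 = 0$. Define a linear injection
\[
\iota : C^n_\mathrm{HLLT}(\mathfrak{g}_\alpha^{[~,~]}, V_\beta^\rho, T; \mathfrak{h}^\varrho_\gamma, W_\eta^\vartheta, S) \hookrightarrow C^n_\mathrm{HLLT}(\tilde{\mathfrak g}, \tilde V, \tilde T)
\]
by $\iota(f_\mathfrak{g}, f_V, P) = (\tilde f_\mathfrak{g}, \tilde f_V, \tilde P)$, where each tilde-extension discards any $\mathfrak h$- or $W$-input slots and places the output in the $\mathfrak h$- or $W$-factor; for example $\tilde f_\mathfrak{g}((x_1,h_1),\ldots,(x_n,h_n)) := (0, f_\mathfrak{g}(x_1,\ldots,x_n))$. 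The desired identity $\delta_\mathrm{HLLT}^2 = 0$ will follow at once from the chain-map property $\tilde \delta_\mathrm{HLLT} \circ \iota = \iota \circ \delta_\mathrm{HLLT}$, since $\iota$ is injective.

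To prove the chain-map property, I would expand $\tilde \delta_\mathrm{HLLT}(\iota(f_\mathfrak{g}, f_V, P))$ component by component using the explicit bracket and actions of the semidirect product, and compare with the formula for $\delta_\mathrm{HLLT}(f_\mathfrak{g}, f_V, P)$ given immediately before the proposition. The cross terms from $[~,~]_\ltimes$ supply exactly the $\varrho$-actions in $\delta(f_\mathfrak{g})$; the cross terms from $\rho_\ltimes$ supply the $\vartheta$-actions and, thanks to the $-\Theta(v)h$ term, the $\Theta$-contribution in $\delta^{f_\mathfrak{g}}(f_V)$; and the block-diagonal shape of $\tilde T = T \oplus S$, combined with the representation compatibility $\varrho(T(v)) \circ S = S \circ \vartheta(T(v))$, yields the formulas for $d_{T,S}(P)$ and $\Omega_{T,S}(f_\mathfrak{g}, f_V)$. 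The remaining diagonal contributions from the tildes vanish because the extended cochains are zero whenever any input is from $\mathfrak h$ or $W$.

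The main obstacle is organizing the bookkeeping in the comparison above: there are many terms arising from both the coboundary and the semidirect product expansions, and one must carefully track which component of $\mathfrak g \oplus \mathfrak h$ or $V \oplus W$ each term lives in. The verification is, however, essentially forced: the representation axioms for $(\mathfrak{h}^\varrho_\gamma, W_\eta^\vartheta, S, \Theta)$ are precisely the identities needed to make $(\tilde{\mathfrak g}, \tilde V, \tilde T)$ into a genuine Hom-Lie-Leibniz triple, and they are also precisely the identities that force the tilde-extensions to be compatible with the two coboundary formulas. Once the chain-map property is verified, $\delta_\mathrm{HLLT}^2 = \iota^{-1} \circ \tilde\delta_\mathrm{HLLT}^2 \circ \iota = 0$ follows immediately.
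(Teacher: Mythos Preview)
Your proposal is correct and follows essentially the same approach as the paper: both form the semidirect product Hom-Lie-Leibniz triple, invoke $(\delta_\mathrm{HLLT})^2=0$ in the adjoint case (which comes from the twisted $L_\infty$-algebra), and then identify the complex with coefficients as a subcomplex of the adjoint complex of the semidirect product. You are simply more explicit than the paper about the embedding $\iota$ and the chain-map verification, whereas the paper just asserts the subcomplex property.
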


\begin{proof}
Since $(\mathfrak{g}_\alpha^{[~,~]}, V_\beta^\rho, T)$ is a Hom-Lie-Leibniz triple and $(\mathfrak{h}^\varrho_\gamma, W_\eta^\vartheta, S, \Theta)$ is a representation, the semi-direct product
\begin{align*}
     \big(  (\mathfrak{g} \oplus \mathfrak{h})^{[~,~]_\ltimes}_{\alpha \oplus \gamma}  , (V \oplus W)_{\beta \oplus \eta}^{\rho_\ltimes} , T \oplus S \big)
\end{align*}
is a Hom-Lie-Leibniz triple. Thus, one may consider the cochain complex 
\begin{align*}
 \big\{  C^\bullet_\mathrm{HLLT}  \big(  (\mathfrak{g} \oplus \mathfrak{h})^{[~,~]_\ltimes}_{\alpha \oplus \gamma}  , (V \oplus W)_{\beta \oplus \eta}^{\rho_\ltimes} , T \oplus S \big), \delta_\mathrm{HLLT} \big\}
\end{align*}
of the above semidirect product Hom-Lie-Leibniz triple (with coefficients in the adjoint representation). Then it turns out that
\begin{align*}
    C^\bullet_\mathrm{HLLT} (    \mathfrak{g}_\alpha^{[~,~]}, V_\beta^\rho, T ; \mathfrak{h}^\varrho_\gamma, W_\eta^\vartheta, S   ) \subset C^\bullet_\mathrm{HLLT}  \big(  (\mathfrak{g} \oplus \mathfrak{h})^{[~,~]_\ltimes}_{\alpha \oplus \gamma}  , (V \oplus W)_{\beta \oplus \eta}^{\rho_\ltimes} , T \oplus S \big)
\end{align*}
is a subcomplex. Hence we get our required result.
\end{proof}

The cohomology of the complex $\{ C^\bullet_\mathrm{HLLT} (\mathfrak{g}_\alpha^{[~,~]}, V_\beta^\rho, T ; \mathfrak{h}^\varrho_\gamma, W_\eta^\vartheta, S), \delta_\mathrm{HLLT} \}$ is called the {\bf cohomology} of the Hom-Lie-Leibniz triple $(\mathfrak{g}_\alpha^{[~,~]}, V_\beta^\rho, T)$ with coefficients in the representation $(\mathfrak{h}^\varrho_\gamma, W_\eta^\vartheta, S, \Theta)$. We denote the corresponding cohomology group by $H^\bullet_\mathrm{HLLT} (\mathfrak{g}_\alpha^{[~,~]}, V_\beta^\rho, T ; \mathfrak{h}^\varrho_\gamma, W_\eta^\vartheta, S).$

One could use our cohomology (with coefficients in a representation) to study abelian extensions of a Hom-Lie-Leibniz triple. In the following, we consider infinitesimal deformations of a Hom-Lie-Leibniz triple and show that our cohomology governs such deformations. In future, we will study some other applications of cohomology.


\medskip

\subsection{Deformations of Hom-Lie-Leibniz triples} In this subsection, we study infinitesimal deformations of a Hom-Lie-Leibniz triple $(\mathfrak{g}^{[~,~]}_\alpha, V_\beta^\rho, T)$. We show that equivalence classes of such infinitesimal deformations are classified by the second cohomology group $H^2_\mathrm{HLLT} ( \mathfrak{g}^{[~,~]}_\alpha, V_\beta^\rho, T)$.

\begin{defn}
    An {\bf infinitesimal deformation} of a Hom-Lie-Leibniz triple $(\mathfrak{g}^{[~,~]}_\alpha, V_\beta^\rho, T)$ is a Hom-Lie-Leibniz triple of the form
    \begin{align*}
    (\mathfrak{g}^{[~,~] + \epsilon [~,~]_1}_\alpha, V_\beta^{\rho + \epsilon \rho_1 }, T+ \epsilon T_1),
    \end{align*}
    where $\epsilon$ is a parameter that satisfies $\epsilon^2 = 0$.
\end{defn}

Let $\mathfrak{g}[[t]]$ (resp. $V[[t]]$) be the space of all formal power series in $t$ with coefficients in $\mathfrak{g}$ (resp. $V$). We extend the linear map $\alpha: \mathfrak{g} \rightarrow \mathfrak{g}$ to a linear map (denoted by the same notation) $\alpha: \mathfrak{g}[[t]]/(t^2) \rightarrow \mathfrak{g}[[t]]/(t^2)$ by $t$-linearity. Similarly, the map $\beta: V \rightarrow V$ can be extended to a linear map (denoted by the same notation) $\beta: V[[t]]/(t^2) \rightarrow V[[t]]/(t^2)$. Then an infinitesimal deformation of the Hom-Lie-Leibniz triple $(\mathfrak{g}^{[~,~]}_\alpha, V_\beta^\rho, T)$ can be seen as a triple $([~,~]_1, \rho_1, T_1)$ in which
\begin{align*}
    [~,~]_1 \in \mathrm{Hom}(\wedge^2 \mathfrak{g}_\alpha, \mathfrak{g}_\alpha), \quad \rho_1 : \mathfrak{g}_\alpha \rightarrow \mathrm{End}(V_\beta) ~~ \text{ and } ~~ T_1 \in \mathrm{Hom}(V_\beta, \mathfrak{g}_\alpha)
\end{align*}
which makes $\big(  (\mathfrak{g}[[t]]/(t^2) )_\alpha^{[~,~] + t [~,~]_1}, (V[[t]]/(t^2))_\beta^{\rho + t \rho_1}, T+ t T_1  \big)$ into a Hom-Lie-Leibniz triple over the ring ${\bf k}[[t]]/(t^2).$

\medskip

It follows that $(\mathfrak{g}^{[~,~] + \epsilon [~,~]_1}_\alpha, V_\beta^{\rho + \epsilon \rho_1 }, T+ \epsilon T_1)$ is an infinitesimal deformation of the Hom-Lie-Leibniz triple if and only if the following conditions hold:
\begin{align}
    [\alpha (x) , [y, z]_1] + [\alpha (x) , [y,z]]_1 + [\alpha(y), [z,x]_1] + [\alpha (y) , [z,x]]_1 + [\alpha (z), [x, y]_1] + [\alpha (z), [x, y]]_1 = 0, \label{inf-def-id1}\\
    \rho (\alpha (x))\circ \rho_1 (y) + \rho_1(\alpha(x)) \circ \rho (y) - \rho (\alpha (y)) \circ \rho_1 (x) - \rho_1 (\alpha (y)) \circ \rho (x) = \rho_1 ([x, y])\circ \beta + \rho ([x,y]_1) \circ \beta,\\
    [T(u), T_1(v)] + [T_1 (u), T(v)] + [T(u), T(v)]_1 = T \big(  \rho (T_1 u) v \big) + T \big(  \rho_1 (Tu) v  \big) + T_1 \big(   \rho (Tu) v \big), \label{inf-def-id3}
\end{align}
for $x, y, z \in \mathfrak{g}$ and $u, v \in V$. Hence, it follows from (\ref{inf-def-id1})-(\ref{inf-def-id3}) that  $( [~,~]_1 , \rho_1, T_1  )$ defines a $2$-cocycle in the cochain complex of the Hom-Lie-Leibniz triple $(\mathfrak{g}_\alpha^{[~,~]}, V_\beta^\rho, T)$ with coefficients in the adjoint representation. In other words, $[ ([~,~]_1 , \rho_1, T_1)] \in H^2_\mathrm{HLLT} (\mathfrak{g}_\alpha^{[~,~]}, V_\beta^\rho, T)$.

\begin{defn}
    Two infinitesimal deformations 
    \begin{align*}
    (\mathfrak{g}^{[~,~] + \epsilon [~,~]_1}_\alpha, V_\beta^{\rho + \epsilon \rho_1 }, T+ \epsilon T_1) ~ \text{ and } ~ (\mathfrak{g}^{[~,~] + \epsilon [~,~]'_1}_\alpha, V_\beta^{\rho + \epsilon \rho'_1 }, T+ \epsilon T'_1)
    \end{align*}
    are said to be {\bf equivalent} if there are linear maps $N \in \mathrm{Hom}(\mathfrak{g}_\alpha, \mathfrak{g}_\alpha)$ and $S \in \mathrm{Hom}(V_\beta, V_\beta)$ 
    such that the pair of maps
    \begin{align*}
        \big( \mathrm{id}_\mathfrak{g} + \epsilon N , \mathrm{id}_V + \epsilon S   \big)
    \end{align*}
    defines a homomorphism from $(\mathfrak{g}^{[~,~] + \epsilon [~,~]_1}_\alpha, V_\beta^{\rho + \epsilon \rho_1 }, T+ \epsilon T_1)$ to $(\mathfrak{g}^{[~,~] + \epsilon [~,~]'_1}_\alpha, V_\beta^{\rho + \epsilon \rho'_1 }, T+ \epsilon T'_1)$.
\end{defn}

It follows that two infinitesimal deformations as above are equivalent if and only if the following conditions are held:
\begin{align*}
    [~,~]_1 - [~,~]_1' = \delta (N), \quad \rho_1 - \rho_1' = \delta^N (S) ~~~ \text{ and } ~~~ T_1 - T_1' = T \circ S - N \circ T.
\end{align*}
Hence we get that $([~,~]_1 , \rho_1, T_1) - ([~,~]_1' , \rho_1', T_1') = \delta_\mathrm{HLLT} ((N,S))$. In other words, the $2$-cocycles $([~,~]_1 , \rho_1, T_1)$ and $([~,~]'_1 , \rho'_1, T'_1)$ correspond to the same element in $H^2_\mathrm{HLLT} (\mathfrak{g}_\alpha^{ [~, ~]}, V_\beta^\rho, T)$.


\medskip

On the other hand, if we have a $2$-cocycle $(\mu_1 , \rho_1, T_1)$, then it is easy to verify that
\begin{align*}
(\mathfrak{g}^{[~,~] + \epsilon \mu_1}_\alpha, V_\beta^{\rho + \epsilon \rho_1 }, T+ \epsilon T_1)
\end{align*}
is an infinitesimal deformation of the Hom-Lie-Leibniz triple $(\mathfrak{g}^{[~,~]}_\alpha, V_\beta^{\rho }, T)$. Moreover, cohomologous $2$-cocycles correspond to equivalent infinitesimal deformations. As a consequence of all the results described above, we have the following classification result for infinitesimal deformations.

\begin{thm}
    Let $(\mathfrak{g}^{[~,~]}_\alpha, V_\beta^{\rho }, T)$ be a Hom-Lie-Leibniz triple. Then there is a bijective correspondence between the equivalence classes of infinitesimal deformations of $(\mathfrak{g}^{[~,~]}_\alpha, V_\beta^{\rho }, T)$ and the second cohomology group $H^2_\mathrm{HLLT} (\mathfrak{g}^{[~,~]}_\alpha, V_\beta^{\rho }, T).$
\end{thm}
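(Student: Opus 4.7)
The plan is to establish the bijection in both directions, reading off the content from the cocycle and coboundary computations that have already been set up in the discussion preceding the statement. The strategy is to identify infinitesimal deformations with $2$-cocycles in the complex $\{ C^\bullet_\mathrm{HLLT}(\mathfrak{g}_\alpha^{[~,~]}, V_\beta^\rho, T), \delta_\mathrm{HLLT} \}$, and equivalences of deformations with coboundaries.

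First, given an infinitesimal deformation $(\mathfrak{g}^{[~,~] + \epsilon [~,~]_1}_\alpha, V_\beta^{\rho + \epsilon \rho_1 }, T+ \epsilon T_1)$, I would expand the defining axioms of a Hom-Lie-Leibniz triple over ${\bf k}[[t]]/(t^2)$ (Hom-Jacobi identity, representation conditions, embedding tensor identity) and collect the coefficients of $\epsilon$. The $\epsilon^0$ coefficients recover the original Hom-Lie-Leibniz triple structure, while the $\epsilon^1$ coefficients are precisely the identities (\ref{inf-def-id1})--(\ref{inf-def-id3}) displayed in the text. Comparing these with the explicit formulas for $\delta$, $\delta^{f_\mathfrak{g}}$ and $\Omega_T - d_T$ from the definition of $\delta_\mathrm{HLLT}$ in the adjoint complex, one checks that these three identities together amount exactly to $\delta_\mathrm{HLLT}(([~,~]_1, \rho_1, T_1)) = 0$. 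Hence every infinitesimal deformation determines a $2$-cocycle, and conversely a straightforward reversal of this matching shows that any $2$-cocycle $([~,~]_1, \rho_1, T_1)$ gives a genuine infinitesimal deformation.

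Next, for the equivalence part, let $(\mathrm{id}_\mathfrak{g} + \epsilon N, \mathrm{id}_V + \epsilon S)$ be a homomorphism between two infinitesimal deformations as in the definition. Writing out the three requirements for a homomorphism of Hom-Lie-Leibniz triples (preservation of the bracket, intertwining of the representation, commutativity with the embedding tensor) modulo $\epsilon^2$, I collect the $\epsilon^1$ coefficients. A direct comparison with the explicit form of $\delta_\mathrm{HLLT}$ applied to the $1$-cochain $(N, S) \in C^1_\mathrm{HLLT}$ yields
\[
[~,~]_1 - [~,~]_1' = \delta(N), \qquad \rho_1 - \rho_1' = \delta^N(S), \qquad T_1 - T_1' = T \circ S - N \circ T,
\]
i.e.\ $([~,~]_1, \rho_1, T_1) - ([~,~]_1', \rho_1', T_1') = \delta_\mathrm{HLLT}((N, S))$. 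Conversely, if two $2$-cocycles differ by $\delta_\mathrm{HLLT}((N,S))$, then unwinding the above identification shows that the pair $(\mathrm{id}_\mathfrak{g} + \epsilon N, \mathrm{id}_V + \epsilon S)$ is indeed a homomorphism of the associated deformations, so the corresponding deformations are equivalent.

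Putting these two steps together, the assignment $(\mathfrak{g}^{[~,~] + \epsilon [~,~]_1}_\alpha, V_\beta^{\rho + \epsilon \rho_1 }, T+ \epsilon T_1) \mapsto [([~,~]_1, \rho_1, T_1)]$ descends to a well-defined map from equivalence classes of infinitesimal deformations to $H^2_\mathrm{HLLT}(\mathfrak{g}^{[~,~]}_\alpha, V_\beta^\rho, T)$, and the reverse construction provides its inverse. The only delicate point is the bookkeeping of signs in the third component of $\delta_\mathrm{HLLT}$, where the definition involves both $(-1)^n d_T(P)$ and the mixing term $\Omega_T(f_\mathfrak{g}, f_V)$; this is where the deformation identity (\ref{inf-def-id3}) must be matched carefully with the combination $d_T(T_1) + \Omega_T([~,~]_1, \rho_1)$. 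Once these signs line up, the bijection is immediate.
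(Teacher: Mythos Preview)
Your proposal is correct and follows essentially the same approach as the paper: the paper's proof is entirely contained in the discussion preceding the theorem, where it matches the $\epsilon$-coefficients of the Hom-Lie-Leibniz axioms with the $2$-cocycle condition via (\ref{inf-def-id1})--(\ref{inf-def-id3}), identifies equivalences with coboundaries through the relations $[~,~]_1 - [~,~]_1' = \delta(N)$, $\rho_1 - \rho_1' = \delta^N(S)$, $T_1 - T_1' = T \circ S - N \circ T$, and then notes the converse directions. Your write-up organizes this same argument a bit more explicitly but adds no new ingredients.
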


\medskip


\section{Homotopy embedding tensors and $HLeib_\infty$-algebras}\label{sec7}
The notion of $HL_\infty$-algebra (also called a strongly homotopy Hom-Lie algebra) was introduced by Sheng and Chen \cite{sheng-hl} as the homotopy analogue of Hom-Lie algebra. In this section, we define the notion of a homotopy embedding tensor on a $HL_\infty$-algebra with respect to a representation. Next, we introduce the notion of a $HLeib_\infty$-algebra (strongly homotopy Hom-Leibniz algebra) and show that a homotopy embedding tensor induces a $HLeib_\infty$-algebra structure.

\begin{defn}\label{hl-defn}(\cite{sheng-hl}) A {\bf $HL_\infty$-algebra} is a triple $( \mathcal{G}, \{ l_k \}_{k \geq 1}, \alpha)$ consisting of a graded vector space $\mathcal{G} = \oplus_{i \in \mathbb{Z}} \mathcal{G}_i$ equipped with a collection $\{ l_k : \mathcal{G}^{\otimes k} \rightarrow \mathcal{G} \}_{k \geq 1}$ of degree $1$ graded symmetric linear maps and a degree $0$ linear map $\alpha : \mathcal{G} \rightarrow \mathcal{G}$ such that

- each $l_k$ is multiplicative, i.e. $l_k \big(  \alpha(x_1), \ldots, \alpha (x_k )  \big) = \alpha ( l_k (x_1, \ldots, x_k)),$ for all $k \geq 1$,

- higher Hom-Jacobi identities are hold, i.e. for all $n \geq 1$ and homogeneous elements $x_1, \ldots, x_n \in \mathcal{G}$,
    \begin{align*}
\sum_{i+j=n+1} \sum_{\sigma \in \mathbb{S}_{(i, n-i)}} \varepsilon (\sigma)~ l_j \big(   l_i ( x_{\sigma (1)}, \ldots, x_{\sigma (i)}), \alpha^{i-1} ( x_{\sigma (i+1)}), \ldots, \alpha^{i-1} (x_{\sigma (n)}) \big) = 0.
    \end{align*}
\end{defn}

Note that our definition of a $HL_\infty$-algebra differs slightly from the one given in \cite{sheng-hl}. However, the two notions are related by a degree shift. In Definition \ref{hl-defn}, if we take $\alpha = \mathrm{id}_\mathcal{G}$ (the identity map), one obtains the definition of an $L_\infty$-algebra (given in Definition \ref{l-inf-defn}).

\begin{defn}
    Let $( \mathcal{G}, \{ l_k \}_{k \geq 1}, \alpha)$ be a $HL_\infty$-algebra. A {\bf representation} of this $HL_\infty$-algebra is a triple $(\mathcal{V}, \{ \rho_k \}_{k \geq 1}, \beta)$ consisting of a graded vector space $\mathcal{V} = \oplus_{i \in \mathbb{Z}} \mathcal{V}_i$ with a collection $\{ \rho_k :   \mathcal{G}^{\otimes k-1} \otimes \mathcal{V} \rightarrow \mathcal{V}  \}_{k \geq 1}$ of degree $1$ graded linear maps and a degree $0$ linear map $\beta : \mathcal{V} \rightarrow \mathcal{V}$ such that the following conditions are hold:

        - each $\rho_k$ is graded symmetric on the inputs of $\mathcal{G}$, i.e.
        \begin{align*}
            \rho_k (x_{\sigma(1)}, \ldots, x_{ \sigma (k-1)}, v) = \varepsilon (\sigma) \rho_k (x_1, \ldots, x_{k-1}, v), \text{ for all } k \geq 1 \text{ and } \sigma \in \mathbb{S}_{k-1},
        \end{align*}
        
         - each $\rho_k$ is multiplicative, i.e. $\rho_k \big(  \alpha(x_1), \ldots, \alpha (x_{k-1}), \beta(v)  \big) = \beta \big(  \rho_k (x_1, \ldots, x_{k-1}, v)  \big)$, for all $k\geq 1$,
        
         - for each $n \geq 1$ and homogeneous elements $x_1, \ldots, x_{n-1} \in \mathcal{G}$, $x_n \in \mathcal{V}$,
        \begin{align*}
           & \sum_{i+j = n+1} \sum_{\substack{\sigma \in \mathbb{S}_{(i, n-i)} \\ \sigma (i)  = n} }\varepsilon(\sigma )  (-1)^{j-1 + (  i+ |x_{\sigma(1)}| + \cdots + |x_{\sigma (i)}|)( |x_{\sigma (i+1)}| + \cdots + |x_{\sigma (n)}|)} \\
          &  \qquad \qquad \qquad \qquad \qquad \qquad \rho_j \big( \alpha^{i-1} (x_{\sigma(i+1)}), \ldots, \alpha^{i-1} (x_{\sigma(n)}),  \rho_i (x_{\sigma(1)}, \ldots, x_{\sigma(i)})\big) \\
           & \quad +  \sum_{i+j = n+1} \sum_{\substack{\sigma \in \mathbb{S}_{(i, n-i)} \\ \sigma (n)  = n} }\varepsilon(\sigma ) \rho_j \big(  l_i (x_{\sigma(1)}, \ldots, x_{\sigma(i)}), \alpha^{i-1}( x_{\sigma(i+1)}), \ldots, \beta^{i-1}(x_{\sigma(n)})\big) = 0. 
        \end{align*}
\end{defn}

\medskip

\noindent Note that, if $\sigma \in \mathbb{S}_{(i, n-i)}$ is a shuffle, then we have either $\sigma (i) = n$ or $\sigma (n) = n$. Hence 
\begin{align*}
\sum_{\sigma \in \mathbb{S}_{(i, n-i)}} = \sum_{\substack{\sigma \in \mathbb{S}_{(i, n-i)} \\ \sigma (i)  = n} } + \sum_{\substack{\sigma \in \mathbb{S}_{(i, n-i)} \\ \sigma (n)  = n} }.
\end{align*}
Thus, by using the graded symmetry of the structure maps of a $HL_\infty$-algebra, one can easily show that any $HL_\infty$-algebra $(\mathcal{G}, \{ l_k \}_{k \geq 1}, \alpha)$ is a representation of itself, where $\rho_k = l_k$ for $k \geq 1$. This is called the adjoint representation.

We will now define the notion of a $HLeib_\infty$-algebra. Given a graded Hom-vector space, we also construct a graded Lie algebra whose Maurer-Cartan elements precisely correspond to $HLeib_\infty$-algebra structures. The graded Lie bracket here generalizes Balavoine's bracket in the graded context.

\begin{defn}
    A {\bf $HLeib_\infty$-algebra} is a triple $(\mathcal{H} , \{ \pi_k \}_{k \geq 1}, \alpha)$ consisting of a graded vector space $\mathcal{H} = \oplus_{i \in \mathbb{Z}} \mathcal{H}_i$ with a collection $\{ \pi_k : \mathcal{H}^{\otimes k} \rightarrow \mathcal{H} \}_{k \geq 1}$ of degree $1$ linear maps and a degree $0$ linear map $\alpha : \mathcal{H} \rightarrow \mathcal{H}$ satisfying the following conditions:

    - (multiplicativity) $\pi_k (\alpha (x_1), \ldots, \alpha (x_k) ) = \alpha \big(  \pi_k (x_1, \ldots, x_k) \big),$ for all $k \geq 1$,
    
 - (higher Hom-Leibniz identities) for all $n \geq 1$ and homogeneous elements $x_1, \ldots, x_n \in \mathcal{H}$,
 \begin{align*}
\sum_{k+l = n+1} \sum_{i=1}^{n-l+1} \sum_{\sigma \in \mathbb{S}_{(i-1, l-1)}} (-1)^{\gamma_i}~& \varepsilon(\sigma) ~ \pi_k \big(  \alpha^{l-1} (x_{\sigma(1)}), \ldots, \alpha^{l-1} (x_{\sigma(i-1)}), \\
& \pi_l ( x_{\sigma(i)}, \ldots, x_{\sigma (i+l-2)}, x_{i+l-1}), \alpha^{l-1} (x_{i+l}), \ldots, \alpha^{l-1}(x_n) \big) = 0,
 \end{align*}
 where $\gamma_i = |x_{\sigma(1)}| + \cdots + |x_{\sigma(i-1)}|.$
\end{defn}

A Hom-Leibniz algebra (see Definition \ref{hom-leib-defn}) can be regarded as a $HLeib_\infty$-algebra concentrated in degree $-1$. In \cite{wang} the authors introduced Hom-Leibniz $2$-algebras as a categorification of Hom-Leibniz algebras. A $HLeib_\infty$-algebra whose underlying graded vector space is concentrated in two degrees can be realized as a Hom-Leibniz $2$-algebra. In the above definition, if we take $\alpha = \mathrm{id}_\mathcal{H}$, then one obtains the notion of $Leib_\infty$-algebras \cite{poncin,uch}.

Given a $HL_\infty$-algebra and a representation of it, one can construct a $HLeib_\infty$-algebra. More precisely, we have the following result.

\begin{prop}
    Let $(\mathcal{G}, \{ l_k \}_{k \geq 1}, \alpha)$ be a $HL_\infty$-algebra and $(\mathcal{V}, \{ \rho_k \}_{k \geq 1}, \beta)$ be a representation of it. Then the triple $(\mathcal{G} \oplus \mathcal{V}, \{ l_k \boxplus \rho_k \}_{k \geq 1}, \alpha \oplus \beta)$ is a $HLeib_\infty$-algebra, where
    \begin{align}\label{lboxr}
        (l_k \boxplus \rho_k) \big(   (x_1, v_1), \ldots, (x_k, v_k) \big) = \big(  l_k (x_1, \ldots, x_k), \rho_k (x_1, \ldots, x_{k-1}, v_k) \big),
    \end{align}
    for $k \geq 1$ and $(x_1, v_1), \ldots, (x_k, v_k) \in \mathcal{G} \oplus \mathcal{V}$. This is called the hemi-semidirect product.
\end{prop}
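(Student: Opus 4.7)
The plan is to verify the two defining axioms of a $HLeib_\infty$-algebra for the triple $(\mathcal{G} \oplus \mathcal{V}, \{l_k \boxplus \rho_k\}_{k \geq 1}, \alpha \oplus \beta)$. The multiplicativity of each $l_k \boxplus \rho_k$ with respect to $\alpha \oplus \beta$ is an immediate consequence of the multiplicativities of $l_k$ and $\rho_k$ applied componentwise via (\ref{lboxr}), so the whole content lies in verifying the higher Hom-Leibniz identities.

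The key structural observation is that on inputs $(y_1, w_1), \ldots, (y_k, w_k) \in \mathcal{G} \oplus \mathcal{V}$, the $\mathcal{G}$-component of $(l_k \boxplus \rho_k)$ is $l_k(y_1, \ldots, y_k)$, which depends only on first coordinates, while the $\mathcal{V}$-component $\rho_k(y_1, \ldots, y_{k-1}, w_k)$ uses first coordinates of the first $k-1$ inputs together with the second coordinate of only the last input. By multilinearity it suffices to evaluate the higher Hom-Leibniz identities on pure tuples in which each entry is either of the form $(y_j, 0)$ or $(0, w_j)$. The analysis then proceeds by case on the number $N$ of $\mathcal{V}$-type inputs.

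When $N = 0$ every $\mathcal{V}$-component vanishes, and the identity reduces to the corresponding formula for $\{l_k\}$ alone. Using the full graded symmetry of $l_k$, the internal $l_l$-output can be moved from position $i$ to position $1$ in $l_k$ at the cost of a Koszul sign; the resulting Leibniz-style sum over $(i, \sigma \in \mathbb{S}_{(i-1, l-1)})$ repackages as a single sum over shuffles in $\mathbb{S}_{(l, n-l)}$ and gives the higher Hom-Jacobi identity for $l_k$, which holds by hypothesis. When $N \geq 2$, both components vanish termwise: the $\mathcal{G}$-component because $l_k$ receives a zero argument, and the $\mathcal{V}$-component because at least one pure $\mathcal{V}$-input occupies a $\mathcal{G}$-slot of some $\rho_k$ or $\rho_l$. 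The substantive case is $N = 1$ with the sole $\mathcal{V}$-input at position $p$. A direct inspection shows that the $\mathcal{G}$-component always vanishes, and the $\mathcal{V}$-component vanishes termwise unless $p = n$, since only the last slot of $\pi_k$ (and within the inner bracket, only the last slot of $\pi_l$) contributes to the $\mathcal{V}$-output. When $p = n$, the surviving summands split into two families depending on whether $i < n - l + 1$ (giving $\rho_k$ applied to an inner $l_l$-bracket at a non-last position together with $\beta^{l-1}(v_n)$ at the last slot) or $i = n - l + 1$ (giving $\rho_k$ applied to an inner $\rho_l$-output at the last slot). After using the partial graded symmetry of $\rho_k$ in its first $k - 1$ arguments to move the inner $l_l$-output to position $1$, these two families realize precisely the second and first sums of the representation identity for $(\mathcal{V}, \{\rho_k\}, \beta)$, which holds by assumption.

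The principal obstacle is sign and shuffle bookkeeping: one must match the Leibniz-style triple sum over $(k, l, i, \sigma \in \mathbb{S}_{(i-1, l-1)})$, together with the sign $(-1)^{\gamma_i}$, against the Jacobi-style or representation-style sums over shuffles in $\mathbb{S}_{(i, n-i)}$ with their characteristic signs, notably $(-1)^{j-1+(i + |x_{\sigma(1)}| + \cdots + |x_{\sigma(i)}|)(|x_{\sigma(i+1)}| + \cdots + |x_{\sigma(n)}|)}$ appearing in the first sum of the representation identity. This amounts to the standard Koszul-sign calculation for moving a bracket's output across graded entries, and although it is the most delicate part of the argument, it is essentially routine once the combinatorial bijection between the two indexing schemes (the largest element of a block determines the base position $i$) is made explicit.
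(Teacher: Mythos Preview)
Your proposal is correct and follows precisely the direct-verification approach the paper intends; the paper itself records no argument beyond the single sentence ``The proof follows from a straightforward calculation,'' whereas you have supplied the case-by-case analysis that this calculation actually entails. In particular, your decomposition by the number $N$ of $\mathcal{V}$-type inputs, the observation that only the case $N=1$ with the $\mathcal{V}$-input in the last slot is nontrivial, and the matching of the two surviving families (inner $l_l$ versus inner $\rho_l$) against the two sums in the representation identity are exactly what the ``straightforward calculation'' unpacks to.
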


The proof follows from a straightforward calculation. The converse of the above proposition also holds true. More precisely, let $\mathcal{G} = \oplus_{i \in \mathbb{Z}} \mathcal{G}_i$ be a graded vector space, $\alpha : \mathcal{G} \rightarrow \mathcal{G}$ be a degree $0$ linear map and $\{ l_k : \mathcal{G}^{\otimes k} \rightarrow \mathcal{G} \}_{k \geq 1}$ be a collection of degree $1$ symmetric multiplicative linear maps. Suppose $\mathcal{V} = \oplus_{i \in \mathbb{Z}} \mathcal{V}_i$  is another graded vector space with a degree $0$ linear map $\beta : \mathcal{V} \rightarrow \mathcal{V}$ and a collection $\{ \rho_k : \mathcal{G}^{\otimes k-1} \otimes \mathcal{V} \rightarrow \mathcal{V} \}_{k \geq 1}$ of degree $1$ (and symmetric on the inputs of $\mathcal{L}$) multiplicative linear maps. Then $(\mathcal{G}, \{ l_k \}_{k \geq 1}, \alpha)$ is a $HL_\infty$-algebra and $(\mathcal{V}, \{ \rho_k \}_{k \geq 1}, \beta)$ is a representation if and only if $(\mathcal{G} \oplus \mathcal{V}, \{ l_k \boxplus \rho_k \}_{k \geq 1}, \alpha \oplus \beta)$ is a $HLeib_\infty$-algebra.

Let $\mathcal{H} = \oplus_{i \in \mathbb{Z}} \mathcal{H}_i$ be a graded vector space and $\alpha: \mathcal{H} \rightarrow \mathcal{H}$ be a degree $0$ linear map (not necessarily equipped with any additional structure). For any $n \in \mathbb{Z}$, let $\mathrm{Hom}^n( \overline{T}(\mathcal{H})_\alpha, \mathcal{H}_\alpha)$ be the space of all degree $n$ linear maps from the graded vector space $\overline{T}(\mathcal{H})$ to the graded vector space $\mathcal{H}$ that are multiplicative. Thus, an element $\pi \in \mathrm{Hom}^n ( \overline{T}(\mathcal{H})_\alpha, \mathcal{H}_\alpha )$ is given by a sum $\pi = \sum_{k \geq 1} \pi_k$, where $\pi_k \in \mathrm{Hom}^n (\mathcal{H}_\alpha^{\otimes k}, \mathcal{H}_\alpha)$ for $k \geq 1$. Consider the graded vector space $\oplus_{n \in \mathbb{Z}} \mathrm{Hom}^n(\overline{T}(\mathcal{H})_\alpha, \mathcal{H}_\alpha)$. This graded vector space is equipped with a graded Lie bracket defined as follows. For $\pi = \sum_{k \geq 1} \pi_k \in \mathrm{Hom}^n( \overline{T}(\mathcal{H})_\alpha, \mathcal{H}_\alpha)$ and $\varpi = \sum_{l \geq 1} \varpi_l \in \mathrm{Hom}^m( \overline{T}(\mathcal{H})_\alpha, \mathcal{H}_\alpha)$, we define an element $[\pi, \varpi]_B \in \mathrm{Hom}^{n+m}( \overline{T}(\mathcal{H})_\alpha, \mathcal{H}_\alpha)$ by
\begin{align}\label{b-bracket-inf}
    [\pi, \varpi]_B := \sum_{p \geq 1} \sum_{k+l = p+1} \big( \pi_k \diamond \varpi_l - (-1)^{mn} \varpi_l \diamond \pi_k  \big),
\end{align}
where
\begin{align*}
    (\pi_k \diamond \varpi_l) (x_1, \ldots, x_p) = \sum_{i=1}^{p-l+1} \sum_{\sigma \in \mathbb{S}_{(i-1, l-1)}} (-1)^{ |x_{\sigma (1)}| + \cdots + |x_{\sigma (i-1)}| } \varepsilon (\sigma) \pi_k \big( \alpha^{l-1}(x_{\sigma (1)}), \ldots, \alpha^{l-1} (x_{\sigma (i-1)}), \\ \varpi_l (x_{\sigma (i)}, \ldots, x_{\sigma (i+l-2)}, x_{i+l-1}), \ldots, \alpha^{l-1} (x_p)  \big).
\end{align*}
Note that the bracket (\ref{b-bracket-inf}) is a generalization of Balavoine's bracket in the graded context. This graded Lie algebra characterizes $HLeib_\infty$-algebra structures as Maurer-Cartan elements. More precisely, we have the following result.

\begin{thm}
    Let $\mathcal{H} = \oplus_{i \in \mathbb{Z}} \mathcal{H}_i$ be a graded vector space and $\alpha : \mathcal{H} \rightarrow \mathcal{H}$ be a degree $0$ linear map. Then there is a one-to-one correspondence between $HLeib_\infty$-algebra structures on $\mathcal{H}$ and Maurer-Cartan elements in the graded Lie algebra $\big(  \oplus_{n \in \mathbb{Z}} \mathrm{Hom}^n  ( \overline{T}(\mathcal{H})_\alpha, \mathcal{H}_\alpha), [~,~]_B \big).$
\end{thm}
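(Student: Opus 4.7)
The plan is to lift to the graded setting the ungraded argument (given earlier in the paper for Balavoine's bracket characterizing Hom-Leibniz algebras as its Maurer-Cartan elements). There are two stages: first, to verify that $\bigl(\oplus_{n \in \mathbb{Z}} \mathrm{Hom}^n(\overline{T}(\mathcal{H})_\alpha, \mathcal{H}_\alpha), [~,~]_B\bigr)$ is genuinely a graded Lie algebra; second, to translate the Maurer-Cartan equation into the higher Hom-Leibniz identities.

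First I would check the graded antisymmetry of $[~,~]_B$, which is built into the definition (\ref{b-bracket-inf}) via the sign $(-1)^{mn}$. The graded Jacobi identity requires expanding $[[\pi, \varpi]_B, \eta]_B$ using the $\diamond$-operation and classifying the iterated-insertion terms into two types: \emph{nested insertions}, in which $\eta$ is placed inside a slot already occupied by $\varpi$, and \emph{disjoint insertions}, in which $\eta$ and $\varpi$ occupy separate slots of $\pi$ (and analogously for the other two cyclic permutations). Summing over the graded cyclic permutation of $(\pi, \varpi, \eta)$ with the prescribed Koszul signs, the disjoint contributions cancel pairwise while the nested ones reassemble into the required Jacobi terms. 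The Koszul signs $\varepsilon(\sigma)$ from graded unshuffles, the degree-shift factor $(-1)^{|x_{\sigma(1)}|+\cdots+|x_{\sigma(i-1)}|}$, and the twist factors $\alpha^{l-1}$ must be tracked carefully, but the underlying cancellation pattern is identical to Balavoine's.

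Second, for the correspondence itself, suppose $\pi = \sum_{k \geq 1} \pi_k \in \mathrm{Hom}^1(\overline{T}(\mathcal{H})_\alpha, \mathcal{H}_\alpha)$. Each $\pi_k$ is then automatically of degree $1$ and multiplicative, so the two data match. Since $|\pi| = 1$, graded antisymmetry gives $[\pi,\pi]_B = 2(\pi \diamond \pi)$, and the component of $(\pi \diamond \pi)(x_1, \ldots, x_n)$ coming from the splitting $k + l = n+1$ is, by direct expansion of the $\diamond$-formula, exactly the left-hand side of the higher Hom-Leibniz identity at arity $n$ (the sum over $i$ and over unshuffles $\sigma \in \mathbb{S}_{(i-1, l-1)}$ with sign $(-1)^{\gamma_i}\varepsilon(\sigma)$ and arguments twisted by $\alpha^{l-1}$ matches term-by-term). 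Consequently $[\pi, \pi]_B = 0$ holds if and only if the higher Hom-Leibniz identities hold for all $n \geq 1$. Conversely, any family $\{\pi_k\}_{k \geq 1}$ of degree $1$ multiplicative maps assembles into a unique degree $1$ element $\pi = \sum_k \pi_k$ of the graded Lie algebra, and the preceding calculation shows this $\pi$ is Maurer-Cartan precisely when $(\mathcal{H}, \{\pi_k\}_{k \geq 1}, \alpha)$ is a $HLeib_\infty$-algebra.

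The main obstacle is the graded Jacobi identity of stage one: it is a sign-heavy bookkeeping exercise where Koszul signs from graded unshuffles interact nontrivially with the $\alpha^{l-1}$-twists on the spectator arguments and with a double summation pattern. Once the nested/disjoint dichotomy is set up cleanly, however, the disjoint terms cancel by an explicit shuffle bijection and the nested terms close up the Jacobi relation. Stage two is then a direct expansion of $\pi \diamond \pi$ matched against the higher Hom-Leibniz identity, with no further combinatorial work required.
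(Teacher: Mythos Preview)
Your proposal is correct and follows essentially the same route as the paper: the paper's proof consists of precisely your stage two, observing that an element $\pi = \sum_{k \geq 1} \pi_k$ corresponds to a family of degree $1$ multiplicative maps and that $[\pi,\pi]_B = 0$ unpacks, via the explicit $\diamond$-formula, into the higher Hom-Leibniz identities. The paper simply asserts the graded Lie algebra structure without proof, so your stage one (the nested/disjoint cancellation argument for Jacobi) is additional detail that the paper omits; note also that the paper writes $\pi \in \mathrm{Hom}^0$ in its proof, which appears to be a typo for $\mathrm{Hom}^1$, consistent with your version.
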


\begin{proof}
Note that an element $\pi = \sum_{k \geq 1} \pi_k \in \mathrm{Hom}^0 (\overline{T}(\mathcal{H})_\alpha, \mathcal{H}_\alpha)$ corresponds to a collection $\{ \pi_k : \mathcal{H}^{\otimes k} \rightarrow \mathcal{H} \}_{k \geq 1}$ of degree $1$ multiplicative linear maps. Then it follows \}from (\ref{b-bracket-inf}) that $[\pi, \pi]_B = 0$ if and only if $(\mathcal{H} , \{ \pi_k \}_{k \geq 1}, \alpha)$ is a $HLeib_\infty$-algebra. Hence the result follows.
\end{proof}

Given a Hom-Lie algebra and a representation of it, we constructed a graded Lie algebra whose Maurer-Cartan elements are precisely embedding tensors (see Section \ref{sec3}). Here we generalize this idea in the homotopy context. Let $(\mathcal{G}, \{ l_k \}_{k \geq 1}, \alpha)$ be a $HL_\infty$-algebra and $(\mathcal{V}, \{ \rho_k \}_{k \geq 1}, \beta)$ be a representation of it. Consider the graded Lie algebra
\begin{align*}
    \mathfrak{l} = \big(  \oplus_{n \in \mathbb{Z}} \mathrm{Hom}^n (\overline{T} (\mathcal{G} \oplus \mathcal{V})_{\alpha \oplus \beta}, (\mathcal{G} \oplus \mathcal{V})_{\alpha \oplus \beta}   ), [~,~]_B  \big).
\end{align*}
Then it is easy to verify that $\mathfrak{a} = \oplus_{n \in \mathbb{Z}} \mathrm{Hom}^n (\overline{T}(\mathcal{V})_\beta, \mathcal{G}_\alpha)$ is a graded Lie subalgebra of $\mathfrak{l}$. Let $P: \mathfrak{l} \rightarrow \mathfrak{a}$ be the projection onto the subspace $\mathfrak{a}$. Finally, we define an element $\Delta \in l_1 =  \mathrm{Hom}^1 (\overline{T} (\mathcal{G} \oplus \mathcal{V})_{\alpha \oplus \beta}, (\mathcal{G} \oplus \mathcal{V})_{\alpha \oplus \beta}   )$ by
\begin{align*}
    \Delta = \sum_{k=1}^\infty (l_k \boxplus \rho_k),
\end{align*}
where the collection $\{ l_k \boxplus \rho_k \}_{k \geq 1}$ is defined in (\ref{lboxr}).
 Then we have $[\Delta, \Delta]_B=0$ as the collection $\{ l_k \boxplus \rho_k \}_{k \geq 1}$ defines a $HLeib_\infty$-algebra structure. Moreover, the quadruple $(\mathfrak{l}, \mathfrak{a}, P, \Delta)$ is a $V$-data in the sense of Definition \ref{defn-v-d}. As a consequence of Theorem \ref{thm-voro}, we get an $L_\infty$-algebra structure $\{ \widetilde{l}_k \}_{k \geq 1}$ on the graded vector space $\mathfrak{a}$. In other words, $ \big(  \mathfrak{a} =  \oplus_{n \in \mathbb{Z}} \mathrm{Hom}^n (\overline{T}(\mathcal{V})_\beta, \mathcal{G}_\alpha ), \{ \widetilde{l}_k \}_{k \geq 1}  \big)$ is an $L_\infty$-algebra, where
\begin{align*}
    \widetilde{l}_k (a_1, \ldots, a_k) = P [ \cdots[[\Delta, a_1]_B, a_2]_B, \ldots, a_k]_B, \text{ for } k \geq 1 \text{ and } a_1, \ldots, a_k \in \mathfrak{a}.
\end{align*}

\medskip

\begin{defn}
    Let $(\mathcal{G}, \{ l_k \}_{k \geq 1}, \alpha)$ be a $HL_\infty$-algebra and $(\mathcal{V}, \{ \rho_k \}_{k \geq 1}, \beta)$ be a representation of it. A {\bf homotopy embedding tensor} on the $HL_\infty$-algebra with respect to the representation is simply a Maurer-Cartan element of the $L_\infty$-algebra $\big(  \oplus_{n \in \mathbb{Z}} \mathrm{Hom}^n (\overline{T}(\mathcal{V})_\beta, \mathcal{G}_\alpha ), \{ \widetilde{l}_k \}_{k \geq 1}  \big).$
\end{defn}

Thus a homotopy embedding tensor is given by an element $\Pi \in \mathrm{Hom}^0 (\overline{T}(\mathcal{V})_\beta, \mathcal{G}_\alpha )$ that satisfies
\begin{align*}
\sum_{k=1}^\infty \frac{1}{k!} \widetilde{l}_k (\underbrace{\Pi, \ldots, \Pi}_{k \text{ times}}) = 0.
     \end{align*}
     Equivalently, $\Pi$ must satisfy
     \begin{align*}
         \sum_{k=1}^\infty \frac{1}{k!} P [ \cdots [[\Delta, \Pi]_B, \Pi]_B, \ldots, \Pi]_B = 0, \quad \text{i.e. ~} ~ P(e^{[-, \Pi]_B} \Delta) = 0, \text{ where } \Delta = \sum_{k=1}^\infty (l_k \boxplus \rho_k).
     \end{align*}

\begin{prop}
    Let $(\mathcal{G}, \{ l_k \}_{k \geq 1}, \alpha)$ be a $HL_\infty$-algebra and $(\mathcal{V}, \{ \rho_k \}_{k \geq 1}, \beta)$ be a representation of it. Let $\Pi$ be a homotopy embedding tensor. Then the triple $(\mathcal{V}, \{ \pi_k \}_{k \geq 1}, \beta)$ is a $HLeib_\infty$-algebra, where
    \begin{align*}
        \pi_k (v_1, \ldots, v_k) = \big(  e^{[-, \Pi]_B} \sum_{k=1}^\infty (l_k \boxplus \rho_k) \big) (v_1, \ldots, v_k), \text{ for } k \geq 1.
    \end{align*}
\end{prop}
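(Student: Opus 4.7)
The plan is to realize $\{\pi_k\}_{k \geq 1}$ as the restriction to $\mathcal{V}$ of a Maurer-Cartan element of the graded Lie algebra $(\mathfrak{l}, [~,~]_B)$ obtained by twisting the hemi-semidirect product structure by $\Pi$. First, I would note that the element $\Delta := \sum_{k \geq 1}(l_k \boxplus \rho_k)$ satisfies $[\Delta, \Delta]_B = 0$; this is the content of the preceding proposition, since the hemi-semidirect product equips $\mathcal{G} \oplus \mathcal{V}$ (with twist $\alpha \oplus \beta$) with a $HLeib_\infty$-algebra structure, and by the theorem characterizing $HLeib_\infty$-algebras as Maurer-Cartan elements of $(\mathfrak{l}, [~,~]_B)$, $\Delta$ is Maurer-Cartan.

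Next, I would verify that $\mathfrak{a} = \bigoplus_n \mathrm{Hom}^n(\overline{T}(\mathcal{V})_\beta, \mathcal{G}_\alpha)$ is an abelian graded Lie subalgebra of $\mathfrak{l}$, as follows: for $a, b \in \mathfrak{a}$, every term of $[a, b]_B$ requires composing one of these elements into the other, but both have outputs in $\mathcal{G}$ while their inputs are drawn from $\mathcal{V}$, so all terms vanish. In particular $[\Pi, \Pi]_B = 0$, and since $\mathrm{ad}_\Pi := [-, \Pi]_B$ is a derivation of $[~,~]_B$ by graded Jacobi, the operator $e^{\mathrm{ad}_\Pi}$ is a formal graded Lie algebra automorphism of $\mathfrak{l}$. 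Setting $\Delta' := e^{[-, \Pi]_B}\Delta$, we obtain $[\Delta', \Delta']_B = e^{\mathrm{ad}_\Pi}[\Delta, \Delta]_B = 0$, so $\Delta'$ is again Maurer-Cartan in $(\mathfrak{l}, [~,~]_B)$ and therefore defines a $HLeib_\infty$-algebra structure $\{\Delta'_k\}_{k \geq 1}$ on $\mathcal{G} \oplus \mathcal{V}$ with twist $\alpha \oplus \beta$. Multiplicativity of the $\Delta'_k$ follows from the multiplicativity of both $\Delta$ and $\Pi$ together with the fact that the Balavoine-type bracket preserves multiplicativity.

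The final step uses the homotopy embedding tensor condition $P(e^{[-, \Pi]_B}\Delta) = 0$, which reads precisely as $P(\Delta') = 0$: every $\Delta'_k$ sends an all-$\mathcal{V}$ input tuple to an element whose $\mathcal{G}$-component vanishes, i.e.\ into $\mathcal{V}$. Combined with $\beta(\mathcal{V}) \subset \mathcal{V}$, this shows that $\mathcal{V}$ is a $HLeib_\infty$-subalgebra of $(\mathcal{G} \oplus \mathcal{V}, \{\Delta'_k\}, \alpha \oplus \beta)$, so the restricted operations $\pi_k := \Delta'_k|_{\mathcal{V}^{\otimes k}}$ (which match the formula in the statement) assemble into a $HLeib_\infty$-algebra on $\mathcal{V}$, with the higher Hom-Leibniz identities obtained by restricting those of $\{\Delta'_k\}$ to $\mathcal{V}$-inputs. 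The main obstacle I anticipate is the careful combinatorial bookkeeping needed to verify the abelianness of $\mathfrak{a}$ and the closure of $\mathcal{V}$ under each $\Delta'_k$; both require tracing how the $\boxplus$ decomposition interacts with the iterated Balavoine brackets $[\cdots[[\Delta, \Pi]_B, \Pi]_B, \ldots, \Pi]_B$, but once these bookkeeping steps are in hand, the Maurer-Cartan/automorphism framework delivers the conclusion with no further analytic content.
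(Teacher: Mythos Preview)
Your proposal is correct and follows essentially the same route as the paper: show $[\Delta,\Delta]_B=0$, use that $e^{[-,\Pi]_B}$ is a graded Lie algebra automorphism to conclude $\Delta':=e^{[-,\Pi]_B}\Delta$ is again Maurer-Cartan, and then restrict to $\mathcal{V}$. You are in fact more explicit than the paper in invoking the homotopy embedding tensor condition $P(\Delta')=0$ to guarantee that $\Delta'_k|_{\mathcal{V}^{\otimes k}}$ lands in $\mathcal{V}$, a point the paper leaves implicit.
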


\begin{proof}
Since $[\Delta, \Delta]_B = 0$, it follows that
\begin{align*}
    [e^{[-, \Pi]_B} \Delta ~, e^{[-, \Pi]_B} \Delta]_B = e^{[-, \Pi]_B} [\Delta, \Delta]_B = 0.
\end{align*}
Hence $e^{[-, \Pi]_B} \Delta$ is a Maurer-Cartan element in the graded Lie algebra 
\begin{align*}
\mathfrak{l} = \big(  \oplus_{n \in \mathbb{Z}} \mathrm{Hom}^n (\overline{T} (\mathcal{G} \oplus \mathcal{V})_{\alpha \oplus \beta}, (\mathcal{G} \oplus \mathcal{V})_{\alpha \oplus \beta}   ), [~,~]_B  \big).
\end{align*}
Hence the collection $\{ \pi_k \}_{k \geq 1}$ defines a $HLeib_\infty$-algebra structure on $\mathcal{V}$, where
\begin{align*}
    \pi_k = \big( e^{[-, \Pi]_B} \Delta \big)|_{\mathcal{V}^{\otimes k}}, \text{ for } k \geq 1.
\end{align*}
This completes the proof.
\end{proof}

\thispagestyle{empty}



\medskip

\noindent {\bf Acknowledgements.}  A. Das would like to thank Indian Institute of Technology (IIT) Kharagpur for providing the beautiful academic environment where his part of the research has been carried out. A. Makhlouf  would like IFCPAR/CEFIPRA  (Grant No. 6201-C
2019-0071) for their support.

\medskip

\noindent {\bf Data Availability Statement.} Data sharing does not apply to this article as no new data were created or analyzed in this study.


\begin{thebibliography}{BFGM03}

\bibitem{arfa} A. Arfa, N. B. Fraj and A. Makhlouf, Morphisms cohomology and deformations of Hom-algebras, {\em J. Nonlinear Math. Phys.} 25 (2018) 589--603.

\bibitem{makhlouf1} F. Ammar, Z. Ejbehi and A. Makhlouf, Cohomology and deformations of Hom-algebras, {\em J. Lie Theory} 21 (2011), 813--836.

\bibitem{makhlouf3} F. Ammar, Z. Ejbehi and A. Makhlouf, 
Cohomology and Versal Deformations of Hom-Leibniz algebras, {\em arXiv}:1302.7302 (2013).

\bibitem{poncin} M. Ammar and N. Poncin, Coalgebraic approach to the Loday infinity category, stem differential for $2n$-ary
graded and homotopy algebras, {\em Ann. Inst. Fourier (Grenoble)} 60 (2010), 355--387.

\bibitem{bala} D. Balavoine, Deformations of algebras over a quadratic operad, In: ``Operads: Proceedings of Renaissance Conferences'', Pages 207--234.

\bibitem{benayadi} S. Benayadi and A. Makhlouf, Hom-Lie algebras with symmetric invariant nondegenerate bilinear forms, {\em J. Geom. Phys.} 76 (2013), 38--60

\bibitem{bon} R. Bonezzi and O. Hohm, Leibniz gauge theories and infinity structures, {\em Comm. Math. Phys.} 377 (2020), 2027--2077.

\bibitem{bon2} R. Bonezzi and O. Hohm, Duality hierarchies and differential graded Lie algebras, arXiv:1910.10399.

\bibitem{das}
A. Das, Hom-associative algebras up to homotopy, {\em J. Algebra} 556 (2020), 836--878.


\bibitem{das-mishra}
A. Das and S. K. Mishra, The $L_\infty$-deformations of associative Rota-Baxter algebras and homotopy Rota-Baxter operators, {\em J. Math. Phys.} 63 (5) (2022), 051703.

\bibitem{wit}
B. de Wit, H. Samtleben and M. Trigiante, 
On Lagrangians and gaugings of maximal supergravities,
{\em Nuclear Phys. B} 655 (2003), no. 1-2, 93--126.

\bibitem{wit2}
B. de Wit, H. Samtleben and M. Trigiante, 
The maximal $D = 5$ supergravities, {\em Nucl.
Phys. B} 716 (2005), 215--247.

\bibitem{wit3}
B. de Wit and H. Samtleben, Gauged maximal supergravities and hierarchies of
nonabelian vector-tensor systems, {\em Fortschr. Phys.} 53 (2005), 442--449.

\bibitem{wit4}
B. de Wit, H. Nicolai and H. Samtleben, Gauged supergravities, tensor hierarchies, and $M$-theory,
{\em J. High Energy Phys.} 02 (2008), 044, 33 pp.

\bibitem{hls}J. T. Hartwig, D. Larsson and S. D. Silvestrov, Deformations of Lie algebras using $\sigma$-derivations, {\em J. Algebra} 295 (2006), 314--361.

\bibitem{hohm} O. Hohm and H. Samtleben, Leibniz-Chern-Simons theory and phases of exceptional field theory, {\em Comm. Math.
Phys.} 369 (2019), 1055--1089.

\bibitem{yks} Y. Kosmann-Schwarzbach, From Poisson algebras to Gerstenhaber algebras, {\em Ann. Inst. Fourier Grenoble} 46 (5) (1996) 1243--1274.

\bibitem{kotov-strobl} A. Kotov and T. Strobl, The embedding tensor, Leibniz-Loday algebras and their higher gauge theories, {\em Comm. Math. Phys.} 376 (2020), 235--258.

\bibitem{lada-markl} T. Lada and M. Markl, Strongly homotopy lie algebras, {\em Comm. Algebra} 23 (6) (1995), 2147--2161.

\bibitem{lada-s} T. Lada and J. Stasheff, Introduction to sh Lie algebras for physicists, {\em Int. J. Theor. Phys.} 32 (1993), 1087--1104.

\bibitem{larsson-sil} D. Larsson and S. Silvestrov, Quasi-Lie algebras, {\em Contemp. Math.} (2005) pp. 241--248.

\bibitem{lavau} S. Lavau, Tensor hierarchies and Leibniz algebras, {\em J. Geom. Phys.} 144 (2019), 147--189.

\bibitem{lavau-p} S. Lavau and J. Palmkvist, Infinity-enhancing Leibniz algebras, {\em Lett. Math. Phys.} 110 (2020), 3121--3152.

\bibitem{lavau-stas} S. Lavau and J. Stasheff, $L_\infty$-algebra extensions of Leibniz algebras, arXiv preprint, arXiv:2003.07838.

\bibitem{laza} A. Lazarev, Y. Sheng and R. Tang, Deformations and homotopy theory of relative Rota-Baxter Lie algebras, {\em Comm. Math. Phys.} 383 (2021), 595--631.

\bibitem{liu-song} S. Liu, L. Song and R. Tang, Representations and cohomologies of regular Hom-pre-Lie algebras, {\em J. Algebra Appl.} Vol. 19, No. 08 (2020) 2050149.

\bibitem{loday-pira} J.-L. Loday and T. Pirashvili, Universal enveloping algebras of Leibniz algebras and (co)homology, {\em Math. Ann.} 296 (1993), 139--158.

\bibitem{makhlouf0} A. Makhlouf and S. Silvestrov, Hom-algebra structures, {\em J. Gen. Lie Theory Appl. } 2(2), 51--64 (2008) 

\bibitem{makhlouf2} A. Makhlouf and S. Silvestrov, Notes on $1$-parameter formal deformations of Hom-associative and Hom-Lie algebras, {\em Forum Math.} 22, no. 4 (2010), 715--739.

\bibitem{mishra-naolekar} S. K. Mishra and A. Naolekar, $\mathcal{O}$-operators on Hom-Lie algebras, {\em J. Math. Phys.} 61 (12) (2020), 121701.

\bibitem{mukh-saha} G. Mukherjee and R. Saha, Equivariant one-parameter formal deformations of Hom-Leibniz algebras, {\em Commun. Contemp. Math.} Vol. 24, No. 03 (2022) 2050082.

\bibitem{nicolai} H. Nicolai and H. Samtleben, Maximal gauged supergravity in three dimensions,
{\em Phys. Rev. Lett.} 86 (2001), no. 9, 1686--1689.

\bibitem{sheng-rep} Y. Sheng, Representations of hom-Lie algebras, {\em Algebr. Represent. Theor.} 15 (2012) 1081--1098.

\bibitem{sheng-hl} Y. Sheng and D. Chen, Hom-Lie $2$-algebras, {\em J. Algebra} 376 (2013), 174--195.

\bibitem{sheng-embd} Y. Sheng, R. Tang and C. Zhu, The controlling $L_\infty$-algebra, cohomology and homotopy of embedding tensors and Lie-Leibniz triples, {\em Comm. Math. Phys.} 386 (2021), 269--304.


\bibitem{strobl-wage} T. Strobl and F. Wagemann, Enhanced Leibniz algebras: structure theorem and induced Lie $2$-algebra, {\em Comm.
Math. Phys.} 376 (2020), 51--79.

\bibitem{uch} K. Uchino, Derived brackets and sh Leibniz algebras, {\em J. Pure Appl. Algebra} 215 (2011), 1102--1111.

\bibitem{voro} Th. Voronov, Higher derived brackets and homotopy algebras, {\em J. Pure Appl. Algebra} 202 (2005), 133--153.

\bibitem{wang} C. Wang, Z. Wei and Q. Zhang, Hom-Leibniz $2$-algebras, {\em Asian European J. Math.} Vol. 15, No. 01 (2022), 2250015.

\end{thebibliography}
\end{document}